\theoremstyle{plain}
\renewcommand{\theequation}{\arabic{section}.\arabic{equation}}
\renewcommand\thefigure{\thesection.\@arabic\c@figure}
\renewcommand{\thefigure}{\arabic{section}.\arabic{figure}}
\newtheorem{thm}{\bf Theorem}
\newtheorem{cor}{\bf Corollary}
\newtheorem{prop}{Proposition}[section]
\newenvironment{corollary}{\begin{cor}} {\end{cor}}
\newtheorem{lmm}{\bf Lemma}
\newenvironment{lemma}{\begin{lmm}}{\end{lmm}}
\theoremstyle{remark}
\newtheorem{rem}{\bf Remark}[section]
\theoremstyle{definition}
\newtheorem{defn}{\bf Definition}[section]
\numberwithin{table}{section}
\newcommand{\comm}[1]{\marginpar{%
\vskip-\baselineskip 
\raggedright\footnotesize
\itshape\hrule\smallskip#1\par\smallskip\hrule}}
\def \ri {{\rm i}}
\def \rho {\mu}
\newcommand{\bs}[1]{\boldsymbol{#1}}
\newcommand{\D}[3]{{{}_{#1}}{\rm D}_{#2}^{#3}}
\newcommand{\I}[3]{{}_{#1}{\rm I}_{#2}^{#3}}
\newcommand{\CD}[3]{{}_{#1}^{C}{\rm D}_{#2}^{#3}}
\newcommand{\TD}[3]{{{}_{#1}}{\rm D}_{#2}^{#3,\lambda}}
\newcommand{\TI}[3]{{}_{#1}{\rm I}_{#2}^{#3,\lambda}}{}
\newcommand{\GLa}[3]{{\mathcal L}^{(#1,\lambda)}_{#2}{#3}}
\newcommand{\GLb}[3]{\breve {\mathcal{L}}^{(#1,\lambda)}_{#2}{#3}}
\begin{document}
\bibliographystyle{plain}
\graphicspath{{./figs/}}
\baselineskip 13pt

{\title[] {Laguerre Functions and Their Applications to tempered Fractional Differential Equations on Infinite Intervals}
\author{Sheng Chen${}^1$,\;\;  Jie Shen${}^{1,2}$\;\;  and\;\;   Li-Lian Wang${}^3$}

\thanks{${}^1$Fujian Provincial Key Laboratory on Mathematical Modeling \& High Performance Scientific Computing and School of Mathematical Sciences, Xiamen University, Xiamen, Fujian 361005, P. R. China. This work is supported in part by NSFC grants  11371298, 11421110001, 91630204 and 51661135011. }
\thanks{${}^2$Department of Mathematics, Purdue University, West Lafayette,
  IN 47907-1957, USA. J.S. is  partially  supported by NSF grant
        DMS-1620262 and AFOSR grant FA9550-16-1-0102.}
 \thanks{${}^3$Division of Mathematical Sciences, School of Physical
and Mathematical Sciences, Nanyang Technological University,
637371, Singapore. L.W. is partially supported by Singapore MOE AcRF Tier 1 Grant (RG 15/12), and Singapore MOE AcRF Tier 2 Grant (MOE 2013-T2-1-095, ARC 44/13).}
\begin{abstract}
Tempered fractional derivatives originated from the tempered fractional diffusion equations (\textbf{TFDEs}) modeled on the whole space $\mathbb{R}$ (see \cite{temperedsabzikar2014}).   For numerically solving TFDEs, two kinds of generalized Laguerre functions were defined and some important properties were proposed to establish the approximate theory. The related prototype tempered fractional differential problems was proposed and solved as the  guidance. TFDEs are numerically solved by two domains Laguerre spectral method and the numerical experiments show some properties of the TFDEs  and verify the efficiency of the spectral scheme.
\end{abstract}
\keywords{ tempered fractional differential equations, singularity, Laguerre functions, generalized Laguerre functions,  weighted Sobolev spaces, approximation results, spectral accuracy}
 \subjclass[2000]{65N35, 65E05, 65M70,  41A05, 41A10, 41A25}

 \maketitle
\section{Introduction}

The  normal diffusion equation $\partial_t p(x,t)=\partial_x^2p(x,t)$  can be derived from the Brownian motion which describes the particle's random walks. Over the last few decades, a large body of literature has demonstrated that anomalous diffusion, in which the mean square variance grows faster (super-diffusion) or slower (sub-diffusion) than in a Gaussian process, offers a superior fit to experimental data observed in many important practical applications,
e.g., in physical science \cite{magdziarz2007fractional, metzler2000random, metzler2004restaurant, piryatinska2005models},  finance \cite{gorenflo2001fractional, meerschaert2006coupled, scalas2006five},  biology \cite{jeon2012anomalous, baeumer2007fractional} and hydrology \cite{baeumer2001subordinated, cushman2000fractional,deng2006parameter}.
  The  anomalous diffusion equation takes the form
  \begin{equation}
  \partial^\nu_t p(x,t)=\partial_x^\mu p(x,t),
  \end{equation}
  where  $0<\nu\le 1$ and $0<\mu< 2$ (cf. \cite{metzler2000random} for a review on this subject), whose solution exhibits heavy tails, i.e., power law decays at infinity. 
 In order to "temper" the power law decay, the authors of \cite{temperedsabzikar2014} applied an exponential factor $e^{-\lambda |x|}$ to the particle jump density, and showed that the Fourier transform of the tempered probability density function $p(x,t)$ takes the form
$${\mathscr F}[p](\omega,t)=
e^{-[pA_+^{\mu,\lambda}(\omega)+qA_-^{\mu,\lambda}(\omega)]Dt},\quad 0<\mu<2,$$
 where $0\leq p\leq1,~q=1-p$, $D$ is a  constant  and
\begin{equation*}
A_{\pm}^{\mu,\lambda}(\omega):=
\begin{cases}
(\lambda\pm i\omega)^\mu-\lambda^\mu,&0<\mu<1,\\
(\lambda\pm i\omega)^\mu-\lambda^\mu-\pm i\omega \mu\lambda^{\mu-1},&1<\mu<2.
\end{cases}
 \end{equation*}
Moreover, they defined  tempered fractional  derivative operators $\partial_{\pm,x}^{\mu,\lambda}$ through  Fourier transform: $\mathscr{F}{[\partial_{\pm,x}^{\mu,\lambda} u]}(\omega)=A_{\pm}^{\mu,\lambda}(\omega)\mathscr{F}[u](\omega)$, and derived the tempered fractional diffusion equation (\textbf{TFDE}):
\begin{equation}\label{temperFDEoriginal}
\partial_t u(x,t)=(-1)^k C_T \{ p\partial_{+,x}^{\mu,\lambda}+q\partial_{-,x}^{\mu,\lambda} \}u(x,t), \quad \mu\in(k-1,k),~k=1,2.
\end{equation}
It has been argued  that tempered anomalous diffusion models have  advantages over the normal dissuasion models in some applications of geophysics \cite{meerschaert2008tempered,zhang2011gaussian} and finance \cite{carr2002fine}.

It is a challenging task to numerically solve the tempered fractional diffusion equation \eqref{temperFDEoriginal}, due particularly to (i) the non-local nature of tempered fractional  derivatives; and (ii) the unboundedness of the domain. In  \cite{temperedsabzikar2014}, the authors used a finite-difference approach on a truncated domain. In \cite{ZAK15}, the authors considered tempered derivatives on a finite interval and derived an efficient Petrov-Galerkin method for solving tempered fractional ODEs by using the eigenfunctions of tempered fractional Sturm-Liouville problems. In \cite{HZZ16}, the authors used Laguerre functions to approximate the substantial  fractional ODEs, which are similar to those we consider in Section 3, on the half line.
In order to avoid the difficulty of assigning boundary conditions at the truncated boundary, we shall deal with the unbounded domain directly in this paper.

 Since the tempered factional diffusion equation
is derived from the random walk on the whole line, one is tempted to use Hermite polynomials/functions which are suitable for many problems on the whole line \cite{ShenTangWang2011}.  Unfortunately,
  due to the exponential factor  $e^{\lambda |x|}$ in the  tempered fractional derivatives,  Hermite polynomials/functions are not suitable basis functions. Instead,
 as we will show in Section 3, properly defined generalized Laguerre functions  (\textbf{GLFs})  enjoy particularly simple form under the action of  tempered fractional derivatives, just as the relations between  generalized Jacobi functions and usual fractional derivatives \cite{CSW16}. Hence, the main goal  of this paper is to design  efficient spectral methods using \textbf{GLFs}  to solve the tempered fractional diffusion equation  \eqref{temperFDEoriginal} in various situations.
However, Laguerre polynomials/functions are mutually orthogonal on the half line,
 how do we use them to deal with \eqref{temperFDEoriginal} on the whole line? We shall first consider special cases of  \eqref{temperFDEoriginal} with $p=1, \, q=0$ or  $p=0, \, q=1$. In these cases, we can reduce
  \eqref{temperFDEoriginal} to the half line, and the \textbf{GLFs} can be naturally used. For the general case,
  we shall employ a two-domain spectral-element method, and use \textbf{GLFs} as basis functions  on each subdomain.

The rest of the  paper is organized as follows.  In the next section,  we  present the definition of the tempered fractional derivatives, and recall some useful properties of Laguerre polynomials. In Sections \ref{SectionLaguerre}, we define two classes of generalized Laguerre functions, study their approximation properties, and apply them for solving  simple one sided tempered fractional equations. In Section \ref{SectionApp2}, we develop a spectral-Galerkin method for solving a tempered fractional diffusion equation   on the half line. Finally, we present a spectral-Galerkin method for solving the tempered fractional diffusion equation  on the whole line in Section \ref{SectionTFDE}. Some concluding remarks are given in the last section.

\section{Preliminaries}\label{preliminary}
\setcounter{equation}{0}
\setcounter{lmm}{0}
\setcounter{thm}{0}
%
%

Let $\mathbb N$ and $\mathbb R$ be respectively the sets of positive integers and real numbers. We further denote
 \begin{equation}\label{mathNR}
{\mathbb N}_0:=\{0\}\cup {\mathbb N}, \;\; {\mathbb R}^+:= \{x\in {\mathbb R}: x> 0\},\;\; {\mathbb R}^-:= \{x\in {\mathbb R}: x< 0\},\;\; 
 \mathbb{R}_0^\pm:=\mathbb{R}^\pm\cup\{0\}.
\end{equation}

\subsection{Usual (non-tempered) fractional integrals and derivatives}\label{Appenix:fractional calculus}
 Recall the definitions of the fractional integrals and fractional derivatives in the sense of Riemann-Liouville (see e.g., \cite{podlubny1999fractional}).
\begin{defn}[{\bf Riemann-Liouville fractional integrals and derivatives}]\label{RLFDdefn}
 {\em For $a,b\in {\mathbb R} $ or $a=-\infty, b=\infty,$ and  $\rho\in {\mathbb R}^+,$  the left and right fractional integrals are respectively defined  as
 \begin{equation}\label{leftintRL}
    \I{a}{x}{\rho} u(x)=\frac 1 {\Gamma(\rho)}\int_{a}^x \frac{u(y)}{(x-y)^{1-\rho}} {\rm d}y,\;\;   \I{x}{b}{\rho} u(x)=\frac {1}  {\Gamma(\rho)}\int_{x}^b \frac{u(y)}{(y-x)^{1-\rho}} {\rm d}y,\;\;\; x\in \Lambda:=(a,b).
\end{equation}
For real $s\in [k-1, k)$ with $k\in {\mathbb N},$  the left-sided  Riemann-Liouville fractional derivative {\rm(LRLFD)} of order $s$ is defined by
\begin{equation}\label{leftRLdefn}
   \D{a}{x}{s} u(x)=\frac 1{\Gamma(k-s)}\frac{d^k}{dx^k}\int_{a}^x \frac{u(y)}{(x-y)^{s-k+1}} {\rm d}y,\;\;\;  x\in  \Lambda,
\end{equation}
and the right-sided Riemann-Liouville fractional derivative  {\rm(RRLFD)} of order $s$ is defined  by
\begin{equation}\label{rightRLdefn}
    \D{x}{b}{s}u(x)=\frac {(-1)^k}{\Gamma(k-s)}\frac{d^k}{dx^k}\int_{x}^b\frac{u(y)}{(y-x)^{s-k+1}} {\rm d}y, \;\;\;  x\in  \Lambda.
\end{equation}}
\end{defn}

 From the above definitions, it is clear that  for any   $k\in {\mathbb N}_0,$
\begin{equation}\label{2rela}
 \D{x}{b}{k}=(-1)^{k} \D{}{}{k},\quad  \D{a}{x}{k}=\D{}{}{k},\;\;\; {\rm where}\;\;\;  \D{}{}{k}:=\frac{d^k}{dx^k}.
\end{equation}
Therefore,  we can express the RLFD as
\begin{equation}\label{ImportRela}
    \D{a}{x}{s}  u(x)=\D{}{}{k}\big\{ \I{a}{x} {k-s} u(x)\big\}; \quad \;   \D{x}{b}{s}  u(x)=(-1)^k \D{}{}{k} \big\{ \I{x}{b} {k-s} u(x)\big\}.
\end{equation}
According to \cite[Thm. 2.14]{diethelm2010analysis},  we have that for any finite $a$ and any $v\in L^1(\Lambda),$ and real $s\ge 0,$
\begin{equation}\label{rulesa}
   \D{a}{x} s\, \I{a}{x} s f(x) = f(x),\quad \text{a.e. in} \;\; \Lambda.
 \end{equation}

Note that by commuting the integral and  derivative operators in  \eqref{ImportRela}, we  define the Caputo fractional derivatives:
\begin{equation}\label{Capfderiv}
\CD{a}{x}{s}   u(x)= \I{a}{x} {k-s} \big\{\D{}{}{k} u(x)\big\}; \quad \;
\CD{x}{b}{s}   u(x)= (-1)^k\I{x}{b} {k-s} \big\{\D{}{}{k} u(x)\big\}. 
\end{equation}

For an affine transform $x=\lambda t,~\lambda>0$, on account of
 \begin{equation*}
 \begin{aligned}
   \I{a}{t} \rho v(\lambda t)
   &=\frac {1} {\Gamma(\rho)}\int_{a}^{ t} \frac{v(\lambda s)}{(t-s)^{1-\rho}} d s
   =\frac {\lambda^{-\rho}} {\Gamma(\rho)}\int_{a}^{ t} \frac{v(\lambda s)}{(\lambda t-\lambda s)^{1-\rho}} \lambda d s
   \\&
   =\frac {\lambda^{-\rho}} {\Gamma(\rho)}\int_{\lambda a}^{ x} \frac{v(y)}{(x-y)^{1-\rho}}dy=\lambda^{-\rho} \I{\lambda a}{x}\rho v(x),
   \end{aligned}
\end{equation*}
and $\frac{\rm d}{{\rm d}t}=\lambda \frac{\rm d}{{\rm d}x},$ we derive from Definition \ref{RLFDdefn} that
\begin{equation}\label{transform xt}
 \I{a}{t} \rho v(\lambda t)= \lambda^{-\rho} \I{\lambda a}{x}\rho v(x),\quad \D{a}{t} s v(\lambda t)= \lambda^{s} \D{\lambda a}{x} s v(x),\quad s,\rho,\lambda>0.
\end{equation}
Similarly, we have the following identities for the right fractional derivative:
\begin{equation}\label{transform2 xt}
 \I{t}{b} \rho v(\lambda t)= \lambda^{-\rho} \I{t}{\lambda b}\rho v(x),\quad \D{t}{b} s v(\lambda t)= \lambda^{s} \D{x}{\lambda b} s v(x),\quad s,\rho,\lambda>0.
\end{equation}

\subsection{Tempered fractional integrals and derivatives on ${\mathbb R}$}
Recently,   Sabzikar et al.  \cite[(19)-(23)]{temperedsabzikar2014}  introduced
the  tempered fractional integrals and  derivatives on the whole line.
\begin{defn}[{\bf Tempered fractional integrals}]\label{temperedInt}
{\em For  $\lambda\in {\mathbb R}^+_0$,  the left tempered fractional integral of a suitable function $u(x)$ of
order $\mu\in {\mathbb R}^+$ is defined by
\begin{equation}\label{temperedInt+}
\TI{-\infty}{x}{\mu}  u(x)=\frac{1}{\Gamma(\mu)}\int_{-\infty}^x \frac{e^{-\lambda(x-y)}}{(x-y)^{1-\mu}}\,u(y)\,{\rm d}y,\quad x\in \mathbb{R},
\end{equation}
and the right tempered fractional integral of order $\mu\in {\mathbb R}^+$ is defined by
\begin{equation}\label{temperedInt-}
\TI{x}{\infty}{\mu} u(x)=\frac{1}{\Gamma(\rho)}\int^{\infty}_x \frac{e^{-\lambda(y-x)}}{(y-x)^{1-\rho}}\,u(y)\,{\rm d}y,\quad x\in \mathbb{R}.
\end{equation}
}
\end{defn}

It is evident that by \eqref{leftintRL} and \eqref{temperedInt+}-\eqref{temperedInt-},   we have
\begin{equation}\label{relationA00}
\I{-\infty}{x}{\mu}={}_{-\infty}{\rm I}_{x}^{\mu,0}, \quad \I{x}{\infty}{\mu}={}_{x}{\rm I}_{\infty}^{\mu,0},
\end{equation}
and
\begin{equation}\label{relationA}
\TI{-\infty}{x}{\mu}  u(x)=e^{-\lambda x}\I{-\infty}{x}{\mu}\big\{e^{\lambda x} u(x)\big\},\quad \TI{x}{\infty}{\mu}  u(x)=e^{\lambda x}\I{x}{\infty}{\mu}\big\{e^{-\lambda x} u(x)\big\}.
\end{equation}

As shown in \cite{temperedsabzikar2014}, the tempered fractional derivative  can be characterized by its Fourier transform.   
Recall that, for any $u\in L^2({\mathbb R}),$   its Fourier transform and inverse Fourier transform  are defined by
\begin{equation}\label{Fourierfinite}
{\mathscr F}[u](\omega)=\int_{-\infty}^\infty u(x)e^{-\ri \omega x}\,{\rm d}x; \quad  u(x)={\mathscr F}^{-1}\big[{\mathscr F}[u](\omega)\big](x)=\frac 1 {2\pi}\int_{-\infty}^\infty {\mathscr F}[u](\omega)e^{\ri \omega x}\,{\rm d}\omega.
\end{equation}
There holds the well-known Parseval's identity:
\begin{equation}\label{parseval}
\int_{-\infty}^\infty u(x)\,\bar v(x)\, {\rm d}x=\frac 1 {2\pi}\int_{-\infty}^\infty {\mathscr F}[u](\omega)\,\overline{{\mathscr F}[v]}(\omega)\,{\rm d}\omega,
\end{equation}
where  $\bar v$ is   the complex conjugate of $v$.
Let $H(x)$ be the   Heaviside  function, i.e., $H(x)=1$ for $x\ge 0,$ and vanishing for all $x< 0.$  Then we can reformulate the left tempered fractional integral as 
\begin{equation}\label{conformA}
\begin{split}
\TI{-\infty}{x}{\mu} u(x)&=\frac{1}{\Gamma(\rho)}\int_0^\infty  y^{\rho-1} e^{-\lambda y} u(x-y)\,{\rm d }y
 =\frac{1}{\Gamma(\rho)}\int_{-\infty}^\infty  y^{\rho-1} e^{-\lambda y} H(y)\, u(x-y)\,{\rm d }y
\\& = \big({ K}\ast u\big)(x),\quad {\rm where}\;\;\;  {K}(x):=x^{\rho-1}e^{-\lambda x}H(x)/{\Gamma(\rho)}.
\end{split}
\end{equation}
 Note that ${K}(x)$  is related to the particle jump density
 (cf.  \cite[(8)]{temperedsabzikar2014}).
Using the formula: $ {\mathscr F}[K](\omega)= {(\lambda+\ri \omega)^{-\rho}},$ and the convolution property of Fourier transform (see, e.g., \cite{samko1993fractional,stein1971introduction}), we derive
\begin{equation}\label{Fouriertran+}
{\mathscr F}[\TI{-\infty}{x}{\mu} u](\omega)={\mathscr F}[K\ast u](\omega)=
{\mathscr F}[K](\omega)\, {\mathscr F}[u](\omega)= {(\lambda+\ri \omega)^{-\rho}} {\mathscr F}[u](\omega).
\end{equation}
Similarly,  the Fourier transform of the right tempered fractional integral is
\begin{equation}\label{Fouriertran-}
{\mathscr F}[\TI{x}{\infty}{\mu} u](\omega)= {(\lambda-\ri \omega)^{-\rho}} {\mathscr F}[u](\omega).
\end{equation}

In view of \eqref{Fouriertran+}-\eqref{Fouriertran-},  Sabzikar et al. \cite{temperedsabzikar2014} then introduced the left and right  tempered fractional derivatives  as follows. 
\begin{defn}[{\bf Tempered fractional derivatives}]\label{temperedDeri}
{\em For  $ \lambda\in {\mathbb R}^+_0$, the left and right  tempered fractional derivatives  of order $\mu\in {\mathbb R}^+$ of a suitable function $u(x),$  are defined by
\begin{equation}\label{Temdef2FourierB}
 {\mathscr F}\big[\TD{-\infty}{x}{\mu} u\big](\omega)=(\lambda+ \ri \omega)^{\rho} {\mathscr F}[u](\omega),\quad {\mathscr F}\big[\TD{x}{\infty}{\mu} u\big](\omega)=(\lambda- \ri \omega)^{\rho} {\mathscr F}[u](\omega),
 \end{equation}
that is, for any $x\in {\mathbb R}$,
\begin{equation}\label{Temdef2Fourier}
\TD{-\infty}{x}{\mu} u(x)={\mathscr F^{-1}}\big[{(\lambda+\ri \omega)^{\rho}} {\mathscr F}[u](\omega)\big](x),\quad \TD{x}{\infty}{\mu} u(x)={\mathscr F^{-1}}\big[{(\lambda-\ri \omega)^{\rho}} {\mathscr F}[u](\omega)\big](x).
 \end{equation}
 }
 \end{defn}

Introduce the space 
\begin{equation}\label{Wlambda}
 W^{\rho,2}_{\lambda}(\mathbb{R}):=\Big\{u\in L^2(\mathbb{R})\,:~\int_{\mathbb{R}}(\lambda^2+\omega^2)^{\rho}\,\big|\mathscr{F} [u](\omega)\big|^2{\rm d}\omega <\infty\Big\},\quad \rho,\lambda\in {\mathbb R}^+.
\end{equation}
Thanks to the Parseval's identity \eqref{parseval},  the above tempered fractional derivatives are  well-defined for any $u\in W^{\rho,2}_{\lambda}(\mathbb{R}).$ Moreover, one verifies from \eqref{Fouriertran+}-\eqref{Temdef2Fourier}  that
\begin{equation}\begin{aligned}\label{newidentiy}
&\TI{-\infty}{x}{\mu}\TD{-\infty}{x}{\mu} u(x)=u(x), \;\;\;\;\TI{x}{\infty}{\mu}\TD{x}{\infty}{\mu} u(x)=u(x), \quad\forall\, u\in W^{\rho,2}_{\lambda}(\mathbb{R});\;\;
\\
&\TD{-\infty}{x}{\mu}\TI{-\infty}{x}{\mu}  u(x)=u(x), \;\;\;\;\TD{x}{\infty}{\mu}\TI{x}{\infty}{\mu} u(x)=u(x),\quad \forall\, u\in L^2(\mathbb R).
\end{aligned}\end{equation}


Similar to \eqref{relationA}, we have the following   explicit representations. 
\begin{prop}\label{newcoms}  For any $u\in W^{\rho,2}_{\lambda}(\mathbb{R}),$ with
$ \lambda\in {\mathbb R}^+_0$, the left and right  tempered fractional derivatives of order $\mu\in [k-1,k)$ with $k\in {\mathbb N},$    have the explicit representations:
\begin{equation}\label{temperedderiLeft}
\TD{-\infty}{x}{\mu} u(x)=e^{-\lambda x} \D{-\infty}{x}{\mu} \big\{e^{\lambda x}u(x)\big\},\quad \TD{x}{\infty}{\mu} u(x)=e^{\lambda x} \D{x}{\infty}{\mu}\big\{e^{-\lambda x}u(x)\big\},
\end{equation}
where $\D{-\infty}{x}{\mu}$ and $\D{x}{\infty}{\mu}$ are the Riemann-Liouville fractional derivative operators  in  Definition \ref{RLFDdefn}.
Alternatively,  
 we have
\begin{equation}\label{temperedderiLeftB}
\begin{split}
\TD{-\infty}{x}{\mu} u(x)&=(\D{}{}{}+\lambda)^k\,\big\{\TI{-\infty}{x}{k-\mu} u(x)\big\}=(\D{}{}{}+\lambda)^k\,\big\{e^{-\lambda x}\I{-\infty}{x}{k-\mu}\big\{e^{\lambda x} u(x)\big\}\big\}; \\
\TD{x}{\infty}{\mu} u(x)&=(\D{}{}{}-\lambda)^k\,\big\{\I{x}{\infty}{k-\mu} u(x)\big\}=(\D{}{}{}-\lambda)^k\,\big\{e^{\lambda x}\I{x}{\infty}{k-\mu}\big\{e^{-\lambda x} u(x)\big\}\big\}.
\end{split}
\end{equation}
\end{prop}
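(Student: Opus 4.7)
The proof plan is to verify \eqref{temperedderiLeftB} directly from the Fourier definition \eqref{Temdef2FourierB}, and then deduce the exponential-shift form \eqref{temperedderiLeft} from \eqref{temperedderiLeftB} by combining the tempered/untempered integral identity \eqref{relationA} with the classical Riemann-Liouville relation \eqref{ImportRela}. I will present the left-sided statements; the right-sided ones follow by symmetric arguments (with $\lambda$ replaced by $-\lambda$ in the exponential factor).

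Step 1 (Fourier verification of the first equality in \eqref{temperedderiLeftB}). For $u\in W^{\rho,2}_\lambda({\mathbb R})$, the function $\TI{-\infty}{x}{k-\mu} u$ is well-defined and by \eqref{Fouriertran+},
\[
{\mathscr F}\big[\TI{-\infty}{x}{k-\mu} u\big](\omega)=(\lambda+\ri\omega)^{-(k-\mu)}{\mathscr F}[u](\omega).
\]
The first-order differential operator $\D{}{}{}+\lambda$ carries Fourier symbol $\ri\omega+\lambda$, hence $(\D{}{}{}+\lambda)^k$ has symbol $(\lambda+\ri\omega)^k$. Multiplying the two symbols gives $(\lambda+\ri\omega)^\mu$, which matches exactly the Fourier symbol of $\TD{-\infty}{x}{\mu}u$ from \eqref{Temdef2FourierB}. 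The membership $u\in W^{\rho,2}_\lambda$ guarantees that $(\lambda+\ri\omega)^\mu{\mathscr F}[u]\in L^2$, so Plancherel legitimizes the inversion and the equality holds in the $L^2$-sense.

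Step 2 (exponential shift lemma). I would next record the elementary identity, valid for any sufficiently smooth $v$:
\[
(\D{}{}{}+\lambda)\big[e^{-\lambda x}v(x)\big]=e^{-\lambda x}\,\D{}{}{}v(x),
\]
which iterates to $(\D{}{}{}+\lambda)^k[e^{-\lambda x}v]=e^{-\lambda x}\,\D{}{}{k}v$. Applying this to $v(x):=\I{-\infty}{x}{k-\mu}\{e^{\lambda x}u(x)\}$ and using the half-line identity \eqref{relationA} in the form $\TI{-\infty}{x}{k-\mu}u(x)=e^{-\lambda x}v(x)$, I obtain
\[
(\D{}{}{}+\lambda)^k\big\{\TI{-\infty}{x}{k-\mu}u(x)\big\}=e^{-\lambda x}\D{}{}{k}\big\{\I{-\infty}{x}{k-\mu}\{e^{\lambda x}u(x)\}\big\}.
\]
By \eqref{ImportRela}, the right-hand side equals $e^{-\lambda x}\D{-\infty}{x}{\mu}\{e^{\lambda x}u(x)\}$. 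Combined with Step 1, this yields both the second equality of \eqref{temperedderiLeftB} and the left identity of \eqref{temperedderiLeft} in one stroke.

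Step 3 (right-sided version). The right-sided analogue uses $(\D{}{}{}-\lambda)[e^{\lambda x}v]=e^{\lambda x}\D{}{}{}v$, the second identity in \eqref{relationA}, and the right version \eqref{Fouriertran-}; the Fourier symbol is now $(\lambda-\ri\omega)^\mu$. The algebra is identical up to sign changes.

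The substantive point to watch is regularity: one must make sure the operators $(\D{}{}{}+\lambda)^k$ act legitimately on $\TI{-\infty}{x}{k-\mu}u$ and that \eqref{ImportRela} is applicable to $e^{\lambda x}u$. The assumption $u\in W^{\rho,2}_\lambda({\mathbb R})$ is tailored for this: it gives $e^{\lambda\cdot}u\in L^1_{\rm loc}$ with the required decay at $-\infty$ encoded in $(\lambda+\ri\omega)^{\rho}{\mathscr F}[u]\in L^2$, so the composition $\D{-\infty}{x}{\mu}\{e^{\lambda x}u\}$ is well-defined. Modulo these standard justifications, there is no genuine obstacle; the proof is essentially a three-line Fourier computation plus the exponential-shift identity.
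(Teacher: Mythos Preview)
Your argument is correct and close in spirit to the paper's, but the order and mechanism differ in a way worth noting. The paper first establishes \eqref{temperedderiLeft} by computing ${\mathscr F}\big[e^{-\lambda x}\D{-\infty}{x}{\mu}\{e^{\lambda x}u\}\big]$ directly (using the Fourier rule ${\mathscr F}[e^{-\lambda x}\D{}{}{k}v]=(\lambda+\ri\omega)^k{\mathscr F}[e^{-\lambda x}v]$, which is your conjugation identity in disguise), and only afterwards derives \eqref{temperedderiLeftB} from \eqref{temperedderiLeft} by an induction on $k$. You reverse the logic: you read off the first equality of \eqref{temperedderiLeftB} from the Fourier symbols $(\lambda+\ri\omega)^k\cdot(\lambda+\ri\omega)^{-(k-\mu)}$, and then the conjugation identity $(\D{}{}{}+\lambda)^k[e^{-\lambda x}v]=e^{-\lambda x}\D{}{}{k}v$ combined with \eqref{relationA} and \eqref{ImportRela} yields both \eqref{temperedderiLeft} and the remaining equality of \eqref{temperedderiLeftB} at once, with no induction. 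Your route is marginally more economical; the paper's has the small advantage that the induction step makes the semigroup structure $\TD{-\infty}{x}{\mu}=\TD{-\infty}{x}{1}\TD{-\infty}{x}{\mu-1}$ explicit. One caution on your Step~3: when you carry the same computation to the right-sided case, the identity $(\D{}{}{}-\lambda)^k[e^{\lambda x}v]=e^{\lambda x}\D{}{}{k}v$ together with $\D{x}{\infty}{\mu}=(-1)^k\D{}{}{k}\I{x}{\infty}{k-\mu}$ produces an extra factor $(-1)^k$, so be careful that the ``sign changes'' you allude to are tracked consistently with the stated form of \eqref{temperedderiLeftB}.
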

\begin{proof} Using the properties of Fourier transform:
${\mathscr F}[e^{-\lambda x}D^k v]=(\lambda+\ri \omega)^k  {\mathscr F}[e^{-\lambda x}v],$ and
\begin{equation}\label{neweqnA}
\begin{split}
{\mathscr F}\big[e^{-\lambda x} & \D{-\infty}{x}{\mu} \big\{e^{\lambda x}u(x)\big\}\big](\omega)=
{\mathscr F}\big[e^{-\lambda x} \D{}{}{k} \I{-\infty}{x}{k-\mu} \big\{e^{\lambda x}u(x)\big\}\big](\omega)\\
& =(\lambda+\ri \omega)^k {\mathscr F}\big[e^{-\lambda x} \I{-\infty}{x}{k-\mu}\big\{e^{\lambda x}u(x)\big\}\big](\omega)
\\
& =(\lambda+\ri \omega)^k {\mathscr F}\big[\TI{-\infty}{x}{k-\mu}u(x)\big](\omega)=(\lambda+\ri \omega)^\mu {\mathscr F}[u](\omega).
\end{split}
\end{equation}
This verifies \eqref{Temdef2FourierB}-\eqref{Temdef2Fourier}.
Similarly, we can derive the second representation of  $\TD{x}{\infty}{\rho} u$ in \eqref{temperedderiLeft}.

The alternative form \eqref{temperedderiLeftB} can be derived   by induction. Here we only
verify the left tempered derivative. 
\begin{itemize}
\item For $\mu\in[0,1)$,  we derive from \eqref{relationA}  and \eqref{temperedderiLeft} that
\begin{align*}
\TD{-\infty}{x}{\mu} u(x)&=e^{-\lambda x} \D{-\infty}{x}{\mu} \big\{e^{\lambda x}u(x)\big\}=
e^{-\lambda x}\,{\rm D} {}_{-\infty}{\rm I}_x^{1-\mu} \big\{e^{\lambda x}u(x)\big\}
\\&=e^{-\lambda x}\,{\rm D} \big[e^{\lambda x}{}_{-\infty}{\rm I}_x^{1-\mu,\lambda} \big\{e^{\lambda x}u(x)\big\}\big]=
({\rm D}+\lambda) \big\{{}_{-\infty}{\rm I}_x^{1-\mu,\lambda} \big\{e^{\lambda x}u(x)\big\}\big\},
\end{align*}
which verifies the identity with $k=1.$

\item  For $\mu\in{[k-2,k-1)}$, we assume that \eqref{temperedderiLeftB} is true.
We next verify the identity holds for $\mu\in[k-1,k)$.
\begin{equation*}
\begin{split}
& (\D{}{}{}+\lambda)^k\, \big\{\TI{-\infty}{x}{k-\mu}  u(x)\big\}=(\D{}{}{}+\lambda)(\D{}{}{}+\lambda)^{k-1}\,\big\{e^{-\lambda x}\I{-\infty}{x}{k-\mu}\big\{e^{\lambda x} u(x)\big\}\big\}
 \\&~~=(\D{}{}{}+\lambda)\big\{e^{-\lambda x}\D{-\infty}{x}{\mu-1}\big\{e^{\lambda x} u(x)\big\}\big\}
=e^{-\lambda x}\D{-\infty}{x}{\mu}\big\{e^{\lambda x} u(x)\big\}=\TD{-\infty}{x}{\mu} u(x).
\end{split}
\end{equation*}
\end{itemize}
This ends the proof.
\end{proof}

We collect below some useful properties (cf. \cite{temperedsabzikar2014}). 
\begin{lemma}\label{temperedProperty}
Given $\lambda>0$ and $\mu\in[k-1,k),~k\in{\mathbb{N}}$, the tempered fractional derivative
\begin{equation}\label{Temdef1}
\TD{-\infty}{x}{\mu} u(x)=\TD{-\infty}{x}{k} \TI{-\infty}{x}{k-\mu} u(x),\quad \TD{x}{\infty}{\mu} u(x)=\TD{x}{\infty}{k} \TI{x}{\infty}{k-\mu} u(x).
\end{equation}
 In addition, we have
\begin{align}
&\TI{-\infty}{x}{\mu+\nu} u(x)=\TI{-\infty}{x}{\mu}\TI{-\infty}{x}{\nu} u(x),\quad\quad\TI{x}{\infty}{\mu+\nu} u(x)=\TI{x}{\infty}{\mu} \TI{x}{\infty}{\nu} u(x),\label{semigroup1}\\
&\TD{-\infty}{x}{\mu+\nu} u(x)=\TD{-\infty}{x}{\mu}\TD{-\infty}{x}{\nu} u(x),\quad\TD{x}{\infty}{\mu+\nu} u(x)=\TD{x}{\infty}{\mu}\TD{x}{\infty}{\nu} u(x),\label{semigroup2}\\
 &(\TD{-\infty}{x}{\mu} u,v)=( u,\TD{x}{\infty}{\mu} v ),\qquad\qquad\qquad (\TD{x}{\infty}{\mu} u,v)=( u, \TD{-\infty}{x}{\mu}v ), \label{integralbypart}
  \end{align}
  where $ \mu,\nu\geq0$. 
\end{lemma}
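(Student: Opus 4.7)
The plan is to reduce every one of the four identities to an elementary algebraic statement in Fourier space, leveraging the characterization of the tempered operators in Definition \ref{temperedDeri} and the formulas \eqref{Fouriertran+}--\eqref{Fouriertran-}. The key observation is that each tempered operator is simply a Fourier multiplier: $\TI{-\infty}{x}{\mu}$ corresponds to multiplication by $(\lambda+\ri\omega)^{-\mu}$, $\TD{-\infty}{x}{\mu}$ to $(\lambda+\ri\omega)^{\mu}$, and their right-sided counterparts to the symbols with $-\ri\omega$ in place of $+\ri\omega$. Since $\lambda\ge 0$, the base $\lambda\pm\ri\omega$ always lies in the closed right half-plane, so the principal-branch power is well-defined and the identity $\overline{(\lambda+\ri\omega)^\mu}=(\lambda-\ri\omega)^\mu$ holds globally in $\omega$.

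For \eqref{Temdef1} I would write $(\lambda+\ri\omega)^\mu=(\lambda+\ri\omega)^{k}\cdot(\lambda+\ri\omega)^{-(k-\mu)}$, read this back through $\mathscr F^{-1}$ using \eqref{Fouriertran+} and \eqref{Temdef2FourierB}, and observe that the composition $\TD{-\infty}{x}{k}\TI{-\infty}{x}{k-\mu}$ is well-defined on $u\in W^{\mu,2}_{\lambda}(\mathbb R)$ because the intermediate symbol $(\lambda+\ri\omega)^{\mu-k}\mathscr F[u]$ lies in $L^2$. The right-sided identity is completely analogous.

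For the semigroup properties \eqref{semigroup1} and \eqref{semigroup2}, the factorizations
\begin{equation*}
(\lambda+\ri\omega)^{-(\mu+\nu)}=(\lambda+\ri\omega)^{-\mu}(\lambda+\ri\omega)^{-\nu},\qquad (\lambda+\ri\omega)^{\mu+\nu}=(\lambda+\ri\omega)^{\mu}(\lambda+\ri\omega)^{\nu},
\end{equation*}
together with the analogous identities in $-\ri\omega$, give the result immediately upon applying $\mathscr F^{-1}$ to $\mathscr F[u]$; one only needs $u\in L^2(\mathbb R)$ for \eqref{semigroup1} and $u\in W^{\mu+\nu,2}_{\lambda}(\mathbb R)$ for \eqref{semigroup2} so that every intermediate quantity remains in $L^2$.

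For the integration-by-parts formula \eqref{integralbypart}, the plan is to invoke Parseval's identity \eqref{parseval}: assuming $u,v\in W^{\mu,2}_{\lambda}(\mathbb R)$,
\begin{equation*}
\big(\TD{-\infty}{x}{\mu}u,v\big)=\frac{1}{2\pi}\int_{-\infty}^\infty(\lambda+\ri\omega)^\mu\mathscr F[u](\omega)\,\overline{\mathscr F[v](\omega)}\,{\rm d}\omega
=\frac{1}{2\pi}\int_{-\infty}^\infty\mathscr F[u](\omega)\,\overline{(\lambda-\ri\omega)^\mu\mathscr F[v](\omega)}\,{\rm d}\omega,
\end{equation*}
which equals $(u,\TD{x}{\infty}{\mu}v)$ by \eqref{Temdef2FourierB} and one more application of Parseval. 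The second duality identity is obtained by swapping the roles of $+$ and $-$. The only real subtlety, rather than a true obstacle, is the consistent use of the principal branch of the complex power so that $\overline{(\lambda+\ri\omega)^\mu}=(\lambda-\ri\omega)^\mu$; the half-plane location of the base makes this routine.
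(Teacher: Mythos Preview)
Your Fourier-multiplier argument is correct and is the natural way to establish these identities, given that the tempered operators on $\mathbb{R}$ are \emph{defined} via their Fourier symbols in Definition~\ref{temperedDeri} and \eqref{Fouriertran+}--\eqref{Fouriertran-}. The factorizations $(\lambda\pm\ri\omega)^{\mu}=(\lambda\pm\ri\omega)^{k}(\lambda\pm\ri\omega)^{-(k-\mu)}$ and $(\lambda\pm\ri\omega)^{\mu+\nu}=(\lambda\pm\ri\omega)^{\mu}(\lambda\pm\ri\omega)^{\nu}$ are valid on the principal branch because $\lambda>0$ keeps the base in the open right half-plane, and your Parseval computation for \eqref{integralbypart} is clean; the conjugation step $\overline{(\lambda+\ri\omega)^\mu}=(\lambda-\ri\omega)^\mu$ is indeed justified for the same reason.

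As for comparison: the paper does not supply its own proof of Lemma~\ref{temperedProperty} at all---it simply states the properties and attributes them to \cite{temperedsabzikar2014}. Your write-up therefore goes beyond what the paper provides, and the Fourier-side argument you give is essentially the one underlying the cited reference as well, so there is no methodological divergence to discuss.
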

\begin{rem}
For a suitable function  $f(x),~x\in\mathbb{R}^+$,  its reflection $g(y)=f(-y),~y\in\mathbb{R}^-$ satisfies
\begin{equation}\label{rela+-}
\begin{aligned}
\TI{-\infty}{y}\mu g(y)=&\frac{e^{-\lambda y}}{\Gamma(\mu)}\int_{-\infty}^y e^{\lambda \tau}(y-\tau)^{\mu-1}f(-\tau){\rm d}\tau\overset{t=-\tau}{=}\frac{e^{-\lambda y}}{{\Gamma(\mu)}}\int^{\infty}_{-y} e^{-\lambda t}(y+t)^{\mu-1}f(t)\rm {\rm d}t\\
\overset{x=-y}{=}&\frac{e^{\lambda x}}{{\Gamma(\mu)}}\int^{\infty}_{x} e^{-\lambda t}(t-x)^{\mu-1}f(t){\rm d}t=\TI{x}{\infty}\mu f(x).
\end{aligned}
\end{equation}
Hence, we can use \eqref{Temdef1} and  derivative relation $\dfrac{{\rm d}^k}{{\rm d}y^k}=(-1)^k\dfrac{{\rm d}^k}{{\rm d}x^k}$ to obtain the tempered derivative relation
\begin{equation}\label{rela+-D}
\TD{-\infty}{y}\mu f(-y)=\TD{x}{\infty}\mu f(x),\quad y=-x,~x\in\mathbb{R}^+.
\end{equation}
\qed
\end{rem}

\subsection{Laguerre polynomials and some useful formulas}

For any $a\in {\mathbb R}$ and  $j\in {\mathbb N}_0,$ we recall that the rising factorial in the Pochhammer symbol and the Gamma function  have the relation:
\begin{equation}\label{anotation}
(a)_0=1; \;\;\;  
(a)_j:=a(a+1)\cdots (a+j-1)=\frac{\Gamma(a+j)}{\Gamma(a)},\;\; {\rm for}\;\; j\ge 1.
\end{equation}
Recall the hypergeometric function (cf.  \cite{Abr.I64}):
\begin{equation}\label{hyperboscs}
{}_1F_1(a;b; x)=\sum_{j=0}^\infty \frac{(a)_j }{(b)_j}\frac{x^j}{j!},\;\;\; a,b,x\in {\mathbb R}^+,\;\; -b\not \in {\mathbb N}_0.
\end{equation}
   If  $b-a>0,$ then ${}_1F_1(a;b; x)$ is absolutely convergent for all $x\in \mathbb R.$  If $a$  is a negative integer, then it reduces to a  polynomial.

 The Laguerre polynomial
  with  parameter $\alpha>-1$ is defined as in  Szeg\"o \cite[(5.3.3)]{szeg75}:
\begin{equation}\label{Laguerre}
L_n^{(\alpha)}(x)=\frac{(\alpha+1)_n}{n!}~{}_1F_1\big(-n;\alpha+1;x\big),\;\;\; n\geq 1,\;\; x\in {\mathbb R}^+,
\end{equation}
and  $L_0^{(\alpha)}(x)\equiv 1.$
Note that
\begin{equation}\label{Laguzerovalue}
L_n^{(\alpha)}(0)=\dfrac{(\alpha+1)_n}{n!},
 \end{equation}
and the  Laguerre polynomials (with  $\alpha>-1$)  are orthogonal with respect to the weight function $x^\alpha e^{-x},$ 
 namely,  
\begin{equation}\label{Lagorth}
    \int_{0}^\infty {L}_n^{(\alpha)}(x)~ {L}_m^{(\alpha)}(x) ~x^\alpha e^{-x} \, {\rm d}x= \gamma _n^{\alpha}\, \delta_{mn},
    \quad  \gamma _n^{\alpha} =\frac{\Gamma(n+\alpha+1)}{\Gamma(n+1)}.
\end{equation}
They are eigenfunctions of the Sturm-Liouville problem:
\begin{equation}\label{LaguSLProb}
 x^{-\alpha}e^x\partial_x\big( x^{\alpha+1}e^{-x}\partial_xL_n^{(\alpha)}(x)\big)+\lambda_n L_n^{(\alpha)}(x)=0,\quad \lambda_n=n.
\end{equation}
We have the following relations:
\begin{equation}
L_n^{(\alpha)}(x)=\partial _x L_{n}^{(\alpha)}(x)-\partial _x L_{n+1}^{(\alpha)}(x),\label{LaguDerivative1}
\end{equation}
\begin{equation}
 x\partial _x L_n^{(\alpha)}(x)=nL_{n}^{(\alpha)}(x)-(n+\alpha)L_{n-1}^{(\alpha)}(x),\label{LaguDerivative2}
\end{equation}
\begin{equation}
\partial _x L_n^{(\alpha)}(x)=- L_{n-1}^{(\alpha+1)}(x)=-\sum_{k=0}^{n-1}L_{k}^{(\alpha)}(x).\label{LaguDerivative3}
\end{equation}
In particular, for $\alpha=-k,~k=1,2,\ldots$ (See Szeg\"o \cite[(5.2.1)]{szeg75}),
\begin{equation*}
L_n^{(-k)}(x)=(-1)^k\frac{\Gamma(n-k+1)}{\Gamma(n+1)}x^k L_{n-k}^{(k)}(x),\quad n\geq k.
\end{equation*}

For notational convenience, we denote
\begin{equation}\label{hnab}
{h}_n^{a,b}:=\frac{\Gamma(n+1+a)}{\Gamma(n+1+a-b)}.
\end{equation}
We present below some formulas related to Laguerre polynomials and fractional integrals and derivatives, which play an important role  in the  algorithm development and analysis later.  We provide their derivations in Appendix \ref{AppenixB}.
\begin{lemma}\label{Lemmalagupolyderi} For  $\rho\in  {\mathbb R}^+,$ we have
\begin{equation}\begin{aligned}\label{lagupolyint}
\I{0}{x}\rho \{x^\alpha L_n^{(\alpha)}(x)\}=
h^{\alpha,-\rho}_n\,x^{\alpha+\rho} L_n^{(\alpha+\rho)}(x),\quad \alpha>-1;
\end{aligned}\end{equation}
\begin{equation}\begin{aligned}\label{lagupolyderi}
\D{0}{x}\mu \{x^{\alpha} L_n^{(\alpha)}(x)\}=
h^{\alpha,\mu}_n\, x^{\alpha-\mu} L_n^{(\alpha-\mu)}(x),\quad \alpha>\mu-1,
\end{aligned}\end{equation}
and
\begin{align}
& \I{x}{\infty}\rho \{e^{-x} L_n^{(\alpha)}(x)\}= e^{-x} L_n^{(\alpha-\mu)}(x), \quad \alpha>\mu-1; \label{Laguerreintegralr}\\[4pt]
& \D{x}{\infty}\mu \{ e^{-x} L_n^{(\alpha)}(x)\}=e^{-x} L_n^{(\alpha+\mu)}(x),\quad \alpha>-1. \label{Laguerrederivativer}
\end{align}
Moreover, we have that for  $k\in\mathbb{N}$ and $\alpha>k-1$,
 \begin{equation}\label{lagufunderi}
 \D{}{}{k}  \big\{x^\alpha e^{-x} L_{n}^{(\alpha)}(x)\big\}=\frac{\Gamma(n+k+1)}{\Gamma(n+1)}x^{\alpha-k}L^{(\alpha-k)}_{n+k}(x)e^{-x}.
 \end{equation}
\end{lemma}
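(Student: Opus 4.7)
The plan is to prove the six identities in the order \eqref{lagupolyint}, \eqref{lagupolyderi}, \eqref{Laguerreintegralr}, \eqref{Laguerrederivativer}, \eqref{lagufunderi}, using the hypergeometric representation \eqref{Laguerre} together with the basic power/exponential rules of Riemann--Liouville calculus as the common tool. Throughout I will use
\begin{equation*}
\I{0}{x}{\rho}\{x^\beta\}=\frac{\Gamma(\beta+1)}{\Gamma(\beta+\rho+1)}x^{\beta+\rho},\qquad \D{0}{x}{\mu}\{x^\beta\}=\frac{\Gamma(\beta+1)}{\Gamma(\beta-\mu+1)}x^{\beta-\mu},
\end{equation*}
which follow directly from Definition \ref{RLFDdefn}.

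For \eqref{lagupolyint}, I would first expand
\begin{equation*}
x^\alpha L_n^{(\alpha)}(x)=\frac{(\alpha+1)_n}{n!}\sum_{j=0}^n\frac{(-n)_j}{(\alpha+1)_j\, j!}\,x^{\alpha+j},
\end{equation*}
apply $\I{0}{x}{\rho}$ term-by-term, and simplify the quotient $\Gamma(\alpha+j+1)/[\Gamma(\alpha+j+\rho+1)(\alpha+1)_j]$ via $(a)_j=\Gamma(a+j)/\Gamma(a)$ into $\Gamma(\alpha+1)/[\Gamma(\alpha+\rho+1)(\alpha+\rho+1)_j]$. The summand reassembles into the hypergeometric series for $L_n^{(\alpha+\rho)}(x)$, and the prefactor is exactly $h_n^{\alpha,-\rho}x^{\alpha+\rho}$. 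Identity \eqref{lagupolyderi} follows by the same computation with the power-rule for $\D{0}{x}{\mu}$.

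For the right-sided integral \eqref{Laguerreintegralr}, the substitution $y=x+t$ gives
\begin{equation*}
\I{x}{\infty}{\rho}\{e^{-x}L_n^{(\alpha)}(x)\}=\frac{e^{-x}}{\Gamma(\rho)}\int_0^\infty t^{\rho-1}e^{-t}L_n^{(\alpha)}(x+t)\,{\rm d}t.
\end{equation*}
To evaluate the remaining integral I would use the generating function $\sum_{n\ge 0}L_n^{(\alpha)}(y)s^n=(1-s)^{-\alpha-1}e^{-ys/(1-s)}$. Summing against $s^n$ and interchanging sum and integral, the exponential factors combine to $e^{-t/(1-s)}$, yielding a Gamma integral whose value is $\Gamma(\rho)(1-s)^\rho$; absorbing this factor into the prefactor gives $\Gamma(\rho)(1-s)^{-(\alpha-\rho)-1}e^{-xs/(1-s)}=\Gamma(\rho)\sum_n L_n^{(\alpha-\rho)}(x)s^n$, and matching coefficients in $s$ completes the argument. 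The right-sided derivative \eqref{Laguerrederivativer} is then obtained by writing $\D{x}{\infty}{\mu}=(-1)^kD^k\I{x}{\infty}{k-\mu}$ with $\mu\in[k-1,k)$, applying \eqref{Laguerreintegralr}, and collapsing the $k$ derivatives of $e^{-x}L_n^{(\alpha+\mu-k)}(x)$ by repeated use of the three-term relations \eqref{LaguDerivative1}--\eqref{LaguDerivative3}.

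Finally, \eqref{lagufunderi} is the classical differentiation formula for the Laguerre weight; I would establish it by induction on $k$. The base case $k=1$ follows from the Leibniz rule, \eqref{LaguDerivative2} and \eqref{LaguDerivative3}, and the inductive step is a direct algebraic manipulation using the same recurrences. The main obstacle is \eqref{Laguerreintegralr}: the combination of the decaying weight $e^{-x}$ with the right-sided (semi-infinite) Riemann--Liouville integral obstructs a direct term-by-term approach via the hypergeometric series, since the expansion of $L_n^{(\alpha)}(x+t)$ through the addition formula would require a delicate interchange of summation and infinite integration. The generating-function route is the cleanest way to avoid this difficulty, while \eqref{Laguerrederivativer} then reduces to \eqref{Laguerreintegralr} together with the classical Laguerre recurrences.
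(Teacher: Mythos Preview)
Your proposal is correct, and for \eqref{lagupolyint}--\eqref{lagupolyderi} it is essentially the paper's argument: the paper invokes the Euler-type fractional integral of ${}_1F_1$ from \cite{Andrews99}, which is exactly the termwise computation you outline, and then obtains \eqref{lagupolyderi} by applying $\D{0}{x}{\mu}$ and the inversion rule \eqref{rulesa}.

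Where you differ is on the remaining three identities. For \eqref{Laguerreintegralr} the paper simply cites \cite{pod99,popov1982concentration}, whereas your generating-function argument is self-contained and arguably the cleanest direct proof. For \eqref{Laguerrederivativer} the paper takes a shorter route than yours: instead of writing $\D{x}{\infty}{\mu}=(-1)^kD^k\I{x}{\infty}{k-\mu}$ and collapsing the $k$ integer derivatives via the Laguerre recurrences, it just applies $\D{x}{\infty}{\mu}$ to both sides of \eqref{Laguerreintegralr} (with $\alpha$ shifted) and uses $\D{x}{\infty}{\mu}\I{x}{\infty}{\mu}u=u$; this bypasses the recurrence bookkeeping entirely. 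For \eqref{lagufunderi} the paper avoids induction by using the Kummer transformation ${}_1F_1(a;c;x)=e^{x}\,{}_1F_1(c-a;c;-x)$ to rewrite $x^\alpha e^{-x}L_n^{(\alpha)}(x)$ as a pure power series (no exponential), differentiate $k$ times termwise, and then apply Kummer again to recognise $L_{n+k}^{(\alpha-k)}$. Your inductive proof is fine and perhaps more elementary, but the Kummer route gives the general $k$ in one shot without tracking the three-term recurrences at each step.
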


\section{Generalized Laguerre functions}\label{SectionLaguerre}
\setcounter{equation}{0}
\setcounter{lmm}{0}
\setcounter{thm}{0}

In this section,  we introduce the generalized Laguerre functions (GLFs), and study its approximation properties.
In what follows, the operators  $\TI{0}{x}\mu, \TD{0}{x}\mu$ on the half line  should be understood as $0$ in place of $-\infty$ in   \eqref{temperedInt+} and
\eqref{temperedderiLeft}-\eqref{temperedderiLeftB}.
\subsection{Definition and properties}
We first introduce the  GJFs and their associated properties related to tempered fractional integrals/derivatives.
\begin{defn}[{\bf GLFs}]\label{Def_GLFs} {\em
For real $\alpha\in\mathbb{R}$ and $\lambda>0,$ we define the GLFs as
\begin{equation}\label{GLFs}
\GLa{\alpha}{n}{(x)}:=\begin{cases}
x^{-\alpha} e^{-\lambda x}\,{L}_{n}^{(-\alpha)}(2\lambda x),\quad \alpha<0,\\
e^{-\lambda x}\,{L}_{n}^{(\alpha)}(2\lambda x),\qquad\quad\, \alpha\geq 0
\end{cases}
\end{equation}
for  all $x\in \mathbb{R}^+$ and $n\in\mathbb{N}_0.$}
\end{defn}
\begin{rem}
It's noteworthy that Zhang and Guo \cite{GuoZhangC2012laguerre} introduce the GJFs
\begin{equation}\label{GLFsGuoZhang}
\widetilde{\mathscr{L}}_l^{(\alpha,\beta)}(x)
=\begin{cases}
x^{-\alpha}e^{-\frac{\beta}{2}x}L^{(-\alpha)}(\beta x),\quad &\alpha\leq-1,\;\; l\geq \bar{l}_\alpha=[-\alpha],\\
e^{-\frac{\beta}{2}x}L_l^{(\alpha)}(x),&\alpha>-1,\;\;  l\geq\bar{l}_\alpha=0,
\end{cases}
\end{equation}
where the scaling factor $\beta>0.$ It is seen  that
we modified the definition in the range of $0<\alpha<1$ (with $\beta=2\lambda$). This turns out to be essential for the numerical solution of FDEs of order $\mu\in (0,1),$ as we shall see in the subsequent sections.
\qed
\end{rem}

We next present the basic properties of GLFs.
Firstly,  one verifies readily from the orthogonality \eqref{Lagorth} and  Definition \ref{Def_GLFs} that for $\alpha\in\mathbb{R}$ and $\lambda>0,$
  \begin{equation}\label{Temp-orth}
\int_0^\infty \GLa{\alpha}{n}{(x)}\GLa{\alpha}{m}{(x)}\,x^{\alpha}dx=\gamma_n^{{|\alpha|},\lambda}\delta_{nm}, \;\;\;
\gamma _n^{|\alpha|,\lambda} =\frac{\gamma_n^{|\alpha|}}{(2\lambda)^{|\alpha|+1}},
\end{equation}
where $\gamma_n^{|\alpha|}$ is defined in \eqref{Lagorth}.

We have the following  important (left) ``tempered" fractional integral and derivative rules.
\begin{lemma}\label{LemmaTemplagupolyderi} For  $\mu,\nu,\lambda,x \in  {\mathbb R}^+_0,$ we have
\begin{equation}\begin{aligned}\label{TempGLderi1}
\TI{0}{x}{\mu} \GLa{-\nu}{n}{(x)}=h^{\nu,-\mu}_n\GLa{-\nu-\mu}{n}{(x)},\qquad \quad
\end{aligned}\end{equation}
\begin{equation}\begin{aligned}\label{TempGLderi2}
\TD{0}{x}{\mu} \GLa{-\nu}{n}{(x)}=h^{\nu,\mu}_n\GLa{\mu-\nu}{n}{(x)},\quad \nu\geq\mu,
\end{aligned}\end{equation}
and 
\begin{equation}\label{Tempderirelamuk}
\TD{0}{x}{\mu+k} \GLa{-\mu}{n}{(x)}=(-2\lambda)^kh^{\mu,\mu}_n \GLa{k}{n-k}{(x)},\quad n\ge k\in {\mathbb N}_0,
\end{equation}
where $h^{a,b}_n$ is defined in \eqref{hnab}.
\end{lemma}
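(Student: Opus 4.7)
The plan is to reduce each of the three tempered identities to a classical Laguerre polynomial identity by peeling off the exponential factor using \eqref{relationA} and \eqref{temperedderiLeft}, rescaling the variable by $2\lambda$, and then invoking Lemma \ref{Lemmalagupolyderi}. For $\nu>0$ the relevant GLF has the explicit form $\GLa{-\nu}{n}{(x)}=x^{\nu}e^{-\lambda x}L_{n}^{(\nu)}(2\lambda x)$, so multiplying by $e^{\lambda x}$ removes the exponential weight and leaves a pure power of $x$ times a Laguerre polynomial whose argument is $2\lambda x$.

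For \eqref{TempGLderi1} I would first write
\begin{equation*}
\TI{0}{x}{\mu}\GLa{-\nu}{n}{(x)}=e^{-\lambda x}\,\I{0}{x}{\mu}\bigl\{x^{\nu}L_{n}^{(\nu)}(2\lambda x)\bigr\},
\end{equation*}
rewrite $x^{\nu}L_{n}^{(\nu)}(2\lambda x)=(2\lambda)^{-\nu}\bigl[y^{\nu}L_{n}^{(\nu)}(y)\bigr]_{y=2\lambda x}$, apply the scaling rule \eqref{transform xt} (with scale factor $2\lambda$) to pull the rescaled argument outside the fractional integral, and finally invoke \eqref{lagupolyint}. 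A brief tally shows the three powers $(2\lambda)^{-\nu}$, $(2\lambda)^{-\mu}$ and $(2\lambda)^{\nu+\mu}$ cancel exactly, the prefactor collapses to $h_{n}^{\nu,-\mu}$, and what remains is $\GLa{-\nu-\mu}{n}{(x)}$. Identity \eqref{TempGLderi2} is handled identically, using \eqref{temperedderiLeft}, the second half of \eqref{transform xt} and \eqref{lagupolyderi} in place of their integral counterparts; the hypothesis $\nu\ge\mu$ is exactly what is needed for the Laguerre derivative formula to apply (the output Laguerre index is $\nu-\mu\ge0$) and for the resulting object $\GLa{\mu-\nu}{n}{}$ to fall inside the first (or boundary $\alpha=0$) branch of Definition \ref{Def_GLFs}.

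Identity \eqref{Tempderirelamuk} resists this template directly, because \eqref{lagupolyderi} requires the polynomial weight to exceed the derivative order, whereas here we differentiate $x^{\mu}$-weighted data of order $\mu+k$. The remedy is the semigroup property \eqref{semigroup2}, which lets me factor
\begin{equation*}
\TD{0}{x}{\mu+k}\GLa{-\mu}{n}{(x)}=\TD{0}{x}{k}\bigl\{\TD{0}{x}{\mu}\GLa{-\mu}{n}{(x)}\bigr\}=h_{n}^{\mu,\mu}\,\TD{0}{x}{k}\GLa{0}{n}{(x)},
\end{equation*}
the inner step being just \eqref{TempGLderi2} with $\nu=\mu$. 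Since $k\in\mathbb{N}_{0}$, the outer operator reduces to an ordinary $k$-fold derivative after stripping $e^{-\lambda x}$, and iterating \eqref{LaguDerivative3} together with the chain rule (to absorb the $2\lambda$ from the argument) gives $\partial_{x}^{k}L_{n}^{(0)}(2\lambda x)=(-2\lambda)^{k}L_{n-k}^{(k)}(2\lambda x)$ for $n\ge k$. Repackaging this back into GLF notation yields the advertised $(-2\lambda)^{k}h_{n}^{\mu,\mu}\GLa{k}{n-k}{(x)}$.

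The only real obstacle is bookkeeping: keeping the various powers of $2\lambda$ generated by rescaling the Laguerre argument properly aligned with the $h_{n}^{a,b}$ prefactors, and verifying that the branch split at $\alpha=0$ in Definition \ref{Def_GLFs} is consistent at each transition (it is, since the two branches agree on the overlap $\alpha=0$, where $x^{0}=1$). Once this accounting is done case by case, all three identities drop out by direct substitution.
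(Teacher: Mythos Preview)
Your proposal is correct and follows essentially the same route as the paper: strip the exponential via \eqref{relationA}/\eqref{temperedderiLeft}, rescale with \eqref{transform xt}, and invoke Lemma~\ref{Lemmalagupolyderi} for \eqref{TempGLderi1}--\eqref{TempGLderi2}; then for \eqref{Tempderirelamuk} factor $\TD{0}{x}{\mu+k}=\TD{0}{x}{k}\TD{0}{x}{\mu}$, apply \eqref{TempGLderi2} with $\nu=\mu$, and finish with the ordinary $k$-th derivative formula from \eqref{LaguDerivative3}. The only cosmetic difference is that the paper does not bother tracking the individual powers of $2\lambda$ separately (they cancel as you noted), and it writes the factorization $\TD{0}{x}{\mu+k}=\TD{0}{x}{k}\TD{0}{x}{\mu}$ directly rather than citing \eqref{semigroup2}, which strictly speaking is stated for the whole-line operators.
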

\begin{proof}
We obtain from  \eqref{temperedInt+} and
\eqref{temperedderiLeft}-\eqref{temperedderiLeftB} (with  replacing $-\infty$ by $0$)
 that
\begin{equation*}
\TI{0}{x}{\mu}\GLa{-\nu}{n}{(x)}=e^{-\lambda x} \I{0}{x}{\mu} \{e^{\lambda x} \GLa{-\nu}{n}{(x)}\}=e^{-\lambda x} \I{0}{x}{\mu} \{ x^\nu L^{(\nu)}_n(2\lambda x)\},
\end{equation*}
and
\begin{equation*}
\TD{0}{x}{\mu}\GLa{-\nu}{n}{(x)}=e^{-\lambda x} \D{0}{x}{\mu} \{e^{\lambda x} \GLa{-\nu}{n}{(x)}\}=e^{-\lambda x} \D{0}{x}{\mu} \{x^\nu L^{(\nu)}_n(2\lambda x)\}.
\end{equation*}
Thus, from \eqref{transform xt} and Lemma \ref{Lemmalagupolyderi},  we obtain  \eqref{TempGLderi1}-\eqref{TempGLderi2}.

Using \eqref{TempGLderi2} and the  derivative relation \eqref{LaguDerivative3} (with $\alpha=\mu$), we obtain
\begin{equation*}\begin{aligned}
\TD{0}{x}{\mu+k} \GLa{-\mu}{n}{(x)}&=\TD{0}{x}{k} \TD{0}{x}{\mu} \GLa{-\mu}{n}{(x)}=e^{-\lambda x}\D{}{}{k} \big\{e^{\lambda x} h^{\mu,\mu}_n\GLa{0}{n}{(x)}\big\}\\&=h^{\mu,\mu}_n\,e^{-\lambda x}\D{}{}{k} \big\{L_{n}^{(0)}(2\lambda x)\big\}=(-2\lambda)^kh^{\mu,\mu}_n \,L_{n-k}^{(k)}(2\lambda x)\,e^{-\lambda x}.
\end{aligned}\end{equation*}
This leads to \eqref{Tempderirelamuk}.
\end{proof}

Similarly, we have the following rules of the (right) ``tempered" fractional integrals and derivatives.
\begin{lemma}\label{LemmaTemplagufunderi} 
For  $ \mu,\nu,\lambda, x \in  {\mathbb R}^+_0$,  we have
\begin{equation}\begin{aligned}\label{TempGLderi1II}
\quad\TI{x}{\infty}\mu \GLa{\nu}{n}{(x)}=(2\lambda)^{-\mu}\GLa{\nu-\mu}{n}{(x)},\quad \nu\geq\mu,
\end{aligned}\end{equation}
\begin{equation}\begin{aligned}\label{TempGLderi2II}
\TD{x}{\infty}{\mu} \GLa{\nu}{n}{(x)}=(2\lambda)^{\mu}\GLa{\mu+\nu}{n}{(x)}.\quad \quad\quad
\end{aligned}\end{equation}
\end{lemma}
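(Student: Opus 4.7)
The strategy mirrors the proof of Lemma \ref{LemmaTemplagupolyderi}: peel off the exponential factor using the explicit representation of the right tempered operators, reduce to an ordinary right Riemann--Liouville integral/derivative acting on a Laguerre factor, and then invoke the already-established Laguerre identities \eqref{Laguerreintegralr}--\eqref{Laguerrederivativer} after the scaling $s=2\lambda x$.

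For the integral identity \eqref{TempGLderi1II}, the plan is to apply \eqref{relationA} to obtain
\begin{equation*}
\TI{x}{\infty}{\mu}\GLa{\nu}{n}{(x)} = e^{\lambda x}\,\I{x}{\infty}{\mu}\bigl\{e^{-\lambda x}\,\GLa{\nu}{n}{(x)}\bigr\}
= e^{\lambda x}\,\I{x}{\infty}{\mu}\bigl\{e^{-2\lambda x} L_n^{(\nu)}(2\lambda x)\bigr\},
\end{equation*}
since $\GLa{\nu}{n}{(x)}=e^{-\lambda x}L_n^{(\nu)}(2\lambda x)$ for $\nu\ge 0$ by Definition \ref{Def_GLFs}. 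Then the scaling formula \eqref{transform2 xt} (applied with the dilation $s=2\lambda x$) transfers the integration to the variable $s$, producing a factor $(2\lambda)^{-\mu}$, and \eqref{Laguerreintegralr} with $\alpha=\nu\ge\mu>\mu-1$ collapses it to $e^{-2\lambda x}L_n^{(\nu-\mu)}(2\lambda x)$. Multiplying back by $e^{\lambda x}$ recovers exactly $(2\lambda)^{-\mu}\GLa{\nu-\mu}{n}{(x)}$, where $\nu-\mu\ge 0$ places us in the second branch of Definition \ref{Def_GLFs}.

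For the derivative identity \eqref{TempGLderi2II}, the plan is the parallel route through \eqref{temperedderiLeft}:
\begin{equation*}
\TD{x}{\infty}{\mu}\GLa{\nu}{n}{(x)}
= e^{\lambda x}\,\D{x}{\infty}{\mu}\bigl\{e^{-\lambda x}\,\GLa{\nu}{n}{(x)}\bigr\}
= e^{\lambda x}\,\D{x}{\infty}{\mu}\bigl\{e^{-2\lambda x}L_n^{(\nu)}(2\lambda x)\bigr\}.
\end{equation*}
Again using \eqref{transform2 xt} with $s=2\lambda x$ to scale out the $2\lambda$ and produce a factor $(2\lambda)^{\mu}$, identity \eqref{Laguerrederivativer} with $\alpha=\nu\ge 0>-1$ yields $e^{-2\lambda x}L_n^{(\nu+\mu)}(2\lambda x)$, which, after multiplication by $e^{\lambda x}$, equals $(2\lambda)^{\mu}\GLa{\nu+\mu}{n}{(x)}$.

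The routine calculations here are just the two substitutions $s=2\lambda x$ inside the Riemann--Liouville integral; the only thing to watch is the bookkeeping of the exponential prefactors ($e^{-\lambda x}$ from the GLF definition combines with the $e^{-\lambda x}$ from \eqref{relationA}/\eqref{temperedderiLeft} to give $e^{-2\lambda x}$, perfectly matching the $e^{-s}$ appearing in \eqref{Laguerreintegralr}--\eqref{Laguerrederivativer} after the scaling). Since both target formulas stay in the $\alpha\ge 0$ branch of the GLF definition, no case analysis on the sign of the parameter is needed and the proof should be shorter than that of Lemma \ref{LemmaTemplagupolyderi}.
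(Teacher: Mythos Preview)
Your proposal is correct and follows exactly the route the paper indicates: the paper's proof is the one-line remark that \eqref{TempGLderi1II}--\eqref{TempGLderi2II} follow from the scaling relations \eqref{transform xt}--\eqref{transform2 xt} together with Lemma~\ref{Lemmalagupolyderi}, and what you have written is precisely the unpacking of that sentence via \eqref{relationA}/\eqref{temperedderiLeft} and \eqref{Laguerreintegralr}--\eqref{Laguerrederivativer}.
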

\begin{proof}
Identities \eqref{TempGLderi1II} and \eqref{TempGLderi2II} can be easily derived from \eqref{transform xt}, \eqref{transform2 xt} and  Lemma \ref{Lemmalagupolyderi}. 
\end{proof}

We highlight the fractional  derivative formulas, which play an important role in the forthcoming  algorithm and analysis.
\begin{thm}\label{LemmaGLFkderi} Let  $k\in\mathbb{N}$ and $k-\nu\leq 0$,
\begin{equation}\begin{aligned}\label{GLFkderi++}
\TD{-\infty}{x}{k} \big\{ \GLa{-\nu}{n}{(x)}\big\}=
\frac{\Gamma(n+\nu+1)}{\Gamma(n+\nu-k+1)}~\GLa{k-\nu}{n}{(x)} ,
\end{aligned}\end{equation}
 \begin{equation}\label{GLFkderi+-}
 \qquad\TD{x}{\infty}{k}  \big\{ \GLa{-\nu}{n}{(x)}\big\}=(-1)^k\frac{\Gamma(n+k+1)}{\Gamma(n+1)}\GLa{k-\nu}{n+k}{(x)}.\qquad
 \end{equation}
\end{thm}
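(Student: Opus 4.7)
The plan is to reduce both identities to integer-order ordinary derivatives of known Laguerre expressions by invoking Proposition \ref{newcoms} (the exponential conjugation formulas) and then applying the Laguerre derivative identities from Lemma \ref{Lemmalagupolyderi}. Under the hypothesis $k - \nu \le 0$, i.e.\ $\nu \ge k$, the GLF in question has the form $\GLa{-\nu}{n}(x) = x^\nu e^{-\lambda x} L_n^{(\nu)}(2\lambda x)$.

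First I would handle \eqref{GLFkderi++}. Since $k \in \mathbb{N}$, Proposition \ref{newcoms} combined with \eqref{2rela} gives $\TD{-\infty}{x}{k} u = e^{-\lambda x} D^k\{e^{\lambda x} u\}$. Applied to $\GLa{-\nu}{n}$ the exponential cancels, leaving
\[
\TD{-\infty}{x}{k} \GLa{-\nu}{n}(x) = e^{-\lambda x}\,D^k\!\{x^\nu L_n^{(\nu)}(2\lambda x)\}.
\]
A change of variable $y = 2\lambda x$ reduces the inner derivative to $D_y^k\{y^\nu L_n^{(\nu)}(y)\}$ up to scaling factors, and I would apply \eqref{lagupolyderi} (with the integer order $k$ replacing $\mu$, and $\alpha = \nu \ge k > k - 1$), obtaining the factor $\Gamma(n+\nu+1)/\Gamma(n+\nu-k+1)$ and the polynomial $L_n^{(\nu-k)}(2\lambda x)$. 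The scaling factors $(2\lambda)^k$ from the chain rule and $(2\lambda)^{-\nu},(2\lambda)^{\nu-k}$ from the variable substitution cancel exactly. Recognising that $\GLa{k-\nu}{n}(x) = x^{\nu-k} e^{-\lambda x} L_n^{(\nu-k)}(2\lambda x)$ (since $k-\nu \le 0$, by the first branch of Definition \ref{Def_GLFs}) completes the identification.

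For \eqref{GLFkderi+-} I would use the right-sided analogue, namely $\TD{x}{\infty}{k} u = (-1)^k e^{\lambda x} D^k\{e^{-\lambda x} u\}$ from Proposition \ref{newcoms} together with \eqref{2rela}. Here multiplying by $e^{-\lambda x}$ produces $x^\nu e^{-2\lambda x} L_n^{(\nu)}(2\lambda x)$, which after the substitution $y = 2\lambda x$ matches precisely the template of \eqref{lagufunderi} with $\alpha = \nu$. The hypothesis of that formula ($\alpha > k - 1$) is met because $\nu \ge k$. Applying it yields $\frac{\Gamma(n+k+1)}{\Gamma(n+1)} y^{\nu-k} L_{n+k}^{(\nu-k)}(y) e^{-y}$, and after restoring the variable $x$, multiplying by $e^{\lambda x}$, and inserting the sign $(-1)^k$, the exponential becomes $e^{-\lambda x}$ and the $(2\lambda)^{k-\nu}$ and $(2\lambda)^{\nu-k}$ factors again cancel.

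The routine bookkeeping is the only real task; the main thing to watch is (i) keeping the scaling factors $(2\lambda)^{\pm\,\cdot}$ from the chain rule and the variable substitution organised so they cancel, and (ii) checking that the condition $\alpha > k-1$ required by \eqref{lagufunderi} is satisfied, which it is thanks to the standing hypothesis $k \le \nu$. No genuine analytical difficulty arises; the theorem follows from direct computation once the exponential conjugation and scaling are set up.
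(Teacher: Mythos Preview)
Your proposal is correct and follows essentially the same approach as the paper's own proof: both arguments use the exponential-conjugation representations $\TD{-\infty}{x}{k}u=e^{-\lambda x}D^k\{e^{\lambda x}u\}$ and $\TD{x}{\infty}{k}u=(-1)^k e^{\lambda x}D^k\{e^{-\lambda x}u\}$ (the paper states these as \eqref{DkmuA}, you cite Proposition~\ref{newcoms} with \eqref{2rela}), then apply \eqref{lagupolyderi} and \eqref{lagufunderi} respectively after the scaling $y=2\lambda x$. The bookkeeping of the $(2\lambda)^{\pm}$ factors and the verification of the hypothesis $\alpha>k-1$ are handled correctly.
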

\begin{proof}
From Lemma \ref{Lemmalagupolyderi}
and relations
\begin{equation}\label{DkmuA}
\TD{-\infty}{x}{k}u=e^{-\lambda x}\D{}{}{k}\big\{e^{ \lambda x}u\big\},\quad  \TD{x}{\infty}{k}u=e^{\lambda x}(-1)^k\D{}{}{k}\big\{e^{- \lambda x}u\big\},
\end{equation}
 we obtain that for $k-\nu\leq 0$,
\begin{equation*}\begin{aligned}
\TD{-\infty}{x}{k} &  \big\{x^{\nu} \GLa{\nu}{n}{(x)}\big\}=e^{-\lambda x}\D{}{}{k}\big\{(2\lambda)^{-\nu}(2\lambda x)^\nu L^{(\nu)}_n(2\lambda x)\big\}\\
\overset{\eqref{lagupolyderi}}{=}
&\frac{\Gamma(n+1+\nu)}{\Gamma(n+\nu-k+1)} x^{\nu-k} L^{(\nu-k)}_n(2\lambda x)e^{-\lambda x}
=\frac{\Gamma(n+1+\nu)}{\Gamma(n+\nu-k+1)}~\GLa{k-\nu}{n}{(x)},
\end{aligned}\end{equation*}
and
\begin{equation}\begin{aligned}
\TD{x}{\infty}{k} \big\{x^{\nu} \GLa{\nu}{n}{(x)}\big\}=&e^{\lambda x}(-1)^k\D{}{}{k}\big\{(2\lambda)^{-\nu}(2\lambda x)^\nu L^{(\nu)}_n(2\lambda x)e^{-2\lambda x}\big\}\\
\overset{\eqref{lagufunderi}}{=}
&(-1)^k\frac{\Gamma(n+k+1)}{\Gamma(n+1)} x^{\nu-k} L^{(\nu-k)}_{n+k}(2\lambda x)e^{-\lambda x}
\\=&(-1)^k\frac{\Gamma(n+k+1)}{\Gamma(n+1)}~\GLa{k-\nu}{n}{(x)}.
\end{aligned}\end{equation}
This ends the proof.
\end{proof}

Another attractive property of GLFs is that they are eigenfunctions of  Sturm-Liouville problem.
\begin{thm}\label{thm_SL}
Let $s,\nu,x\in\mathbb{R}^+_0$ and $n\in\mathbb{N}_0$. Then,
\begin{equation}\label{ST-}
\qquad x^{\nu}\TD{x}{\infty}{s}\{ x^{s-\nu} \TD{0}{x}{s}\,\GLa{-\nu}{n}{(x)}\}=\lambda_{n,-}^{s,\nu}  \,\GLa{-\nu}{n}{(x)},\quad \nu-s\geq 0,
\end{equation}
and
\begin{equation}\label{ST+}
 x^{-\nu}\TD{0}{x}{s}\{ x^{s+\nu} \TD{x}{\infty}{s}\,\GLa{\nu}{n}{(x)}\}=\lambda_{n,+}^{s,\nu} \,\GLa{\nu}{n}{(x)},\qquad\qquad\,
\end{equation}
where the corresponding  eigenvalues $\lambda_{n,-}^{s,\nu}=(2\lambda)^s h_n^{\nu,s}$ and $\lambda_{n,+}^{s,\nu}=(2\lambda)^s h_n^{\nu+s,s}$.
\end{thm}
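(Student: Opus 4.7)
My plan is to prove both eigenvalue identities by a direct symbolic computation, chaining together the tempered integral/derivative rules from Lemmas \ref{LemmaTemplagupolyderi} and \ref{LemmaTemplagufunderi} together with the two-case definition of the GLFs. The key observation that makes this work cleanly is that the operator $x^{\mp\nu}$ swaps the two branches of Definition \ref{Def_GLFs}: for $\nu>0$,
\begin{equation*}
x^{\nu}\GLa{\nu}{n}{(x)} = x^{\nu} e^{-\lambda x} L_n^{(\nu)}(2\lambda x) = \GLa{-\nu}{n}{(x)},
\end{equation*}
and conversely $x^{-\nu}\GLa{-\nu}{n}{(x)} = \GLa{\nu}{n}{(x)}$. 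Thus the weight factors $x^{s-\nu}$ and $x^{s+\nu}$ that appear between the two tempered derivatives are precisely what is needed to put the intermediate GLF into the index form required by the next operator in line.

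For the first identity (under the hypothesis $\nu-s\ge 0$), I would start from the innermost operator and apply \eqref{TempGLderi2} to obtain $\TD{0}{x}{s}\GLa{-\nu}{n}{(x)} = h_n^{\nu,s}\GLa{s-\nu}{n}{(x)}$. Since $s-\nu\le 0$, the GLF on the right equals $x^{\nu-s}e^{-\lambda x}L_n^{(\nu-s)}(2\lambda x)$, so multiplying by $x^{s-\nu}$ produces $\GLa{\nu-s}{n}{(x)}$, a GLF with nonnegative parameter. Now \eqref{TempGLderi2II} gives $\TD{x}{\infty}{s}\GLa{\nu-s}{n}{(x)} = (2\lambda)^s\GLa{\nu}{n}{(x)}$, and finally multiplying by $x^{\nu}$ returns $\GLa{-\nu}{n}{(x)}$. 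Collecting constants yields the eigenvalue $(2\lambda)^s h_n^{\nu,s}$ as claimed.

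The second identity is handled by the mirror-image argument. Applying \eqref{TempGLderi2II} first gives $\TD{x}{\infty}{s}\GLa{\nu}{n}{(x)} = (2\lambda)^s \GLa{s+\nu}{n}{(x)}$; the weight $x^{s+\nu}$ then converts this to $\GLa{-(s+\nu)}{n}{(x)}$, after which \eqref{TempGLderi2} (with parameter $\nu+s$ in place of $\nu$ and derivative order $s$) produces $h_n^{\nu+s,s}\GLa{-\nu}{n}{(x)}$; a final multiplication by $x^{-\nu}$ restores $\GLa{\nu}{n}{(x)}$, giving the eigenvalue $(2\lambda)^s h_n^{\nu+s,s}$.

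There is no real obstacle to overcome here — the theorem is essentially an algebraic consequence of Lemmas \ref{LemmaTemplagupolyderi}--\ref{LemmaTemplagufunderi}, and no additional ingredients are needed. The only points that require care are (i) verifying that at each intermediate step the parameter of the GLF falls in the regime for which the lemma invoked is valid (this is guaranteed by the hypothesis $\nu\ge s$ in the first identity and is automatic in the second), and (ii) handling the boundary case $\nu=0$, where the two branches of Definition \ref{Def_GLFs} agree so the identifications $x^{\pm\nu}\GLa{\mp\nu}{n} = \GLa{\pm\nu}{n}$ hold trivially.
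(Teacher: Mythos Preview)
Your proposal is correct and follows essentially the same route as the paper: both proofs chain \eqref{TempGLderi2} and \eqref{TempGLderi2II} in the appropriate order, using the identification $x^{s-\nu}\GLa{s-\nu}{n}{(x)}=\GLa{\nu-s}{n}{(x)}$ (and its analogue) to move between GLF branches at the intermediate step. Your write-up is in fact slightly more explicit than the paper's about why the weight factors $x^{s\mp\nu}$ are exactly what is needed, and about the parameter checks ensuring each lemma applies.
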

\begin{proof}
Due to \eqref{TempGLderi2} and \eqref{TempGLderi2II},
\begin{equation*}
\TD{0}{x}{s} \GLa{-\nu}{n}{(x)}=h^{\nu,s}_n\GLa{s-\nu}{n}{(x)}, \quad \TD{x}{\infty}{s} \GLa{\nu-s}{n}{(x)}=(2\lambda)^{s}\GLa{\nu}{n}{(x)}.
\end{equation*}
It's straightforward to obtain that
\begin{equation*}\begin{aligned}
x^{\nu}&\TD{x}{\infty}{s}\{ x^{s-\nu} \TD{0}{x}{s}\,\GLa{-\nu}{n}{(x)}\}=x^{\nu}\TD{x}{\infty}{s}\{ x^{s-\nu} \TD{0}{x}{s}\GLa{-\nu}{n}{(x)} \}
\\&=h^{\nu,s}_n x^{\nu}\TD{x}{\infty}{s}\{ x^{s-\nu} \GLa{s-\nu}{n}{(x)} \}
=h^{\nu,s}_n x^{\nu}\TD{x}{\infty}{s} \GLa{\nu-s}{n}{(x)}=(2\lambda)^s h_n^{\nu,s} \,\GLa{-\nu}{n}{(x)}.
\end{aligned}\end{equation*}
Similarly, we have
\begin{equation*}\begin{aligned}
x^{-\nu}&\TD{0}{x}{s}\{ x^{s+\nu} \TD{x}{\infty}{s}\,\GLa{\nu}{n}{(x)}\}=x^{-\nu}\TD{0}{x}{s}\{ x^{s+\nu} \TD{x}{\infty}{s}\GLa{\nu}{n}{(x)} \}\\&=(2\lambda)^s x^{-\nu}\TD{0}{x}{s}\{ x^{s+\nu} \GLa{s+\nu}{n}{(x)} \}=(2\lambda)^s x^{-\nu}\TD{0}{x}{s} \GLa{-\nu-s}{n}{(x)}=(2\lambda)^s h_n^{\nu+s,s} \,\GLa{\nu}{n}{(x)}.
\end{aligned}\end{equation*}
This ends the derivation. 
\end{proof}

\begin{rem}
The above results can be viewed as an extension of the standard Sturm-Liouville problem  of generalized Laguerre functions (cf. \eqref{LaguSLProb})  to the tempered fractional derivative. We derive immediately from \eqref{ST-}, \eqref{ST+} and the Stirling's formula (see \eqref{stirlingfor}) that for fixed $s$ and $\nu$,
\begin{equation*}
\lambda_{n,-}^{s,\nu} =\lambda_{n,+}^{s,\nu}=O\big((2\lambda n)^s\big),\quad n\gg 1.
\end{equation*}
When $s\rightarrow 1$ and $\lambda=1/2$, it recovers the $O(n)$ growth of eigenvalues of the standard Sturm-Liouville problem.\qed
\end{rem}
\subsection{Approximation by GLFs }
\subsubsection{Approximation by $\big\{\GLa{\alpha}{n}{(x)}:~\alpha=-\nu<0\big\}_{n=0}^\infty$. }
Denote by $\mathcal{P}_N$ the set of all  polynomials of degree at most $N$, and define the finite dimensional
 space
\begin{equation}\label{SpaceGLF1}
\mathcal{F}^{\nu,\lambda}_N({\mathbb R}^+):=\big\{x^{\nu}e^{-\lambda x} p(x)\,:\, p\in\mathcal{P}_N\big\}, \quad N\in\mathbb{N}_0.
\end{equation}
Define the $L_\omega^2({\mathbb R}^+)$ with the inner product and norm:
  \begin{equation}
  (f,g)_{\omega}:=\int_{{\mathbb R}^+} f\,\bar g\, \omega\, {\rm d}x, \quad \|f\|_{\omega}^2=(f,f)_{\omega},
  \end{equation}
  where $\omega(x)$ be a generic  weight function  and $\bar g$ is the conjugate of the function $g$.
In particular, we omit $\omega$ when $\omega\equiv 1.$

To characterize the approximation errors, we define the non-uniformly weighted Sobolev space
\begin{equation}\label{Soblev Sp1}
A_{\nu,\lambda}^m({\mathbb R}^+):=\Big\{u\in L^2_{\omega^{-\nu}}({\mathbb R}^+): \TD{0}{x}{\nu+k} u \in L^2_{\omega^{k}}({\mathbb R}^+),\;k=0,\cdots,m\Big\},\quad m\in \mathbb{N}_0,
\end{equation}
equipped with the norm and semi-norm
\begin{equation}
\|u\|_{A_{\nu,\lambda}^m}:=\Big(\|u\|^2_{\omega^{-\nu}}+\sum_{k=0}^{m}\|\TD{0}{x}{\nu+k}u\|^2_{\omega^{k}}\Big)^{{1}/{2}},\quad|u|_{A_{\nu,\lambda}^m}:=\|\TD{0}{x}{\nu+m}u\|_{\omega^{m}},
\end{equation}
where  the weight function $\omega^{a}(x)=x^a.$


Consider  the orthogonal projection $\pi_N^{-\nu,\lambda}:\,{L^2_{\omega^{-\nu}}}({\mathbb R}^+)\rightarrow \mathcal{F}^{\nu,\lambda}_N({\mathbb R}^+)$ defined by
  \begin{equation}\label{Projection1L2}
 (\pi_N^{-\nu,\lambda} u-u, ~\phi)_{\omega^{-\nu}}=0,\quad \forall \phi\in \mathcal{F}^{\nu,\lambda}_N({\mathbb R}^+).
 \end{equation}
 Then, by the orthogonality \eqref{Temp-orth},  $u$ and its $L^2$-orthogonal projection can be expanded as
 \begin{equation}\label{expanform}
 u(x)=\sum_{n=0}^\infty ~\hat{u}_n \GLa{-\nu}{n}{(x)},\quad
  (\pi_N^{-\nu,\lambda} u)(x)=\sum_{n=0}^N ~\hat{u}_n \GLa{-\nu}{n}{(x)},
 \end{equation}
 where  
 $$\hat{u}_n=\big(u,~{\mathcal L}_n^{(-\nu,\lambda)}\big)_{\omega^{-\nu}}\big/\gamma^{\nu,\lambda}_n.$$


\begin{thm}\label{ThmAppI}
For  $\lambda, \nu>0$, we have that  for any $u\in A^m_{\nu,\lambda}(\mathbb{R}^+)
$ with $m\leq N+1$,
\begin{equation}\label{Thm31}
\|\pi_N^{-\nu,\lambda} u-u\|_{\omega^{-\nu}}\leq ~c~(2\lambda N)^{-\frac{\nu+m}{2}} ~\|\TD{0}{x}{\nu+m}u\|_{\omega^{m}},
\end{equation}
and for any  $k\le m,$ 
\begin{equation}\label{Thm32}
\big\|\TD{0}{x}{\nu+k}\big(\pi_N^{-\nu,\lambda} u-u\big)\big\|_{\omega^{k}}\leq ~c~({2\lambda}N)^{\frac{k-m}{2}} ~\|\TD{0}{x}{\nu+m}u\|_{\omega^{m}},
\end{equation}
where $c\approx 1$ for large $N.$
\end{thm}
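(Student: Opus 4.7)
\medskip
\noindent\textbf{Proof plan.}
The plan is to reduce everything to a direct coefficient–by–coefficient comparison in the GLF basis, exploiting the particularly clean derivative formula \eqref{Tempderirelamuk} and the orthogonality \eqref{Temp-orth}. Since $\pi_N^{-\nu,\lambda}$ is the $L^2_{\omega^{-\nu}}$-orthogonal projection, if we expand $u=\sum_{n\ge 0}\hat u_n\,\GLa{-\nu}{n}{(x)}$, then
\[
\bigl\|\pi_N^{-\nu,\lambda}u-u\bigr\|_{\omega^{-\nu}}^{2}=\sum_{n>N}|\hat u_n|^{2}\,\gamma_n^{\nu,\lambda}.
\]
The first key step is to apply Theorem \ref{LemmaGLFkderi} (equivalently \eqref{Tempderirelamuk}) term by term: for every integer $k\ge 0$,
\[
\TD{0}{x}{\nu+k}u(x)=\sum_{n\ge k}(-2\lambda)^{k}\,h_n^{\nu,\nu}\,\hat u_n\,\GLa{k}{n-k}{(x)}.
\]
Since $\{\GLa{k}{n-k}{}\}_{n\ge k}$ is orthogonal with respect to $\omega^{k}$ with normalization $\gamma_{n-k}^{k,\lambda}$ (this is \eqref{Temp-orth} with $\alpha=k$), this yields the clean Parseval identity
\[
\bigl\|\TD{0}{x}{\nu+k}u\bigr\|_{\omega^{k}}^{2}=\sum_{n\ge k}(2\lambda)^{2k}\bigl(h_n^{\nu,\nu}\bigr)^{2}\gamma_{n-k}^{k,\lambda}\,|\hat u_n|^{2}.
\]

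\medskip
\noindent The second step is a pure coefficient–ratio estimate. For \eqref{Thm31}, using $h_n^{\nu,\nu}=\Gamma(n+\nu+1)/n!$, $\gamma_n^{\nu,\lambda}=\Gamma(n+\nu+1)/(n!(2\lambda)^{\nu+1})$ and $\gamma_{n-m}^{m,\lambda}=n!/((n-m)!(2\lambda)^{m+1})$, a direct simplification gives
\[
\frac{\gamma_n^{\nu,\lambda}}{(2\lambda)^{2m}\,(h_n^{\nu,\nu})^{2}\,\gamma_{n-m}^{m,\lambda}}=\frac{(n-m)!}{(2\lambda)^{\nu+m}\,\Gamma(n+\nu+1)}.
\]
Pulling the supremum of this ratio over $n>N$ out of the tail sum and invoking Stirling's formula (stated in the appendix as \eqref{stirlingfor}), this supremum behaves like $(2\lambda N)^{-(\nu+m)}(1+o(1))$, which immediately yields \eqref{Thm31}. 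Similarly, for \eqref{Thm32},
\[
\frac{(2\lambda)^{2k}\gamma_{n-k}^{k,\lambda}}{(2\lambda)^{2m}\gamma_{n-m}^{m,\lambda}}=\frac{(n-m)!}{(n-k)!\,(2\lambda)^{m-k}},
\]
whose supremum over $n>N$ is $\sim(2\lambda N)^{-(m-k)}$ for $k\le m$, giving \eqref{Thm32}.

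\medskip
\noindent The only genuine technicality is to confirm that these ratios are monotone decreasing in $n$ (so that the supremum is attained at $n=N+1$ rather than somewhere larger), and that the constant $c$ in front really satisfies $c\approx 1$ for large $N$. Both are routine: monotonicity follows from writing $(n-m)!/\Gamma(n+\nu+1)$ as a product of ratios of consecutive terms and checking each factor is decreasing for $n$ large compared to $\nu+m$, while the asymptotic $c\to 1$ comes from the two–term Stirling expansion. The main obstacle, and the one worth emphasizing, is justifying the termwise application of $\TD{0}{x}{\nu+k}$ to the expansion of $u$: this must be done in $A^m_{\nu,\lambda}(\mathbb R^+)$, using the characterization of this space via the finiteness of $\sum_n(h_n^{\nu,\nu})^2\gamma_{n-k}^{k,\lambda}|\hat u_n|^2$, so that the series defining $\TD{0}{x}{\nu+k}u$ converges in $L^2_{\omega^{k}}$ and one can legitimately identify Fourier coefficients; once this is established, the rest of the argument is an elementary tail estimate.
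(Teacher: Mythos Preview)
Your proposal is correct and follows essentially the same route as the paper: expand the projection error in the GLF basis, apply \eqref{Tempderirelamuk} termwise to obtain a Parseval identity for $\|\TD{0}{x}{\nu+k}u\|_{\omega^k}^2$, and then bound the tail by the coefficient ratio at $n=N+1$, simplified via Stirling's formula. The paper carries out exactly these computations (with the same ratios you wrote down), though it does not pause to justify monotonicity of the ratio or the termwise differentiation as carefully as you do.
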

\begin{proof}
By \eqref{expanform}, we have
\begin{equation*}
(u-\pi_N^{-\nu,\lambda} u)(x)=\sum_{n=N+1}^\infty \,\hat{u}_n\, \GLa{-\nu}{n}{(x)}.
 \end{equation*}
By the orthogonality \eqref{Temp-orth} and \eqref{Tempderirelamuk},
\begin{equation*}
 \big\|\TD{0}{x}{\nu+k}\GLa{-\nu}{n}{}\big\|^2_{\omega^{k}}=(-2\lambda)^{2k}\,(h^{\nu,\nu}_n)^2\int_{0}^\infty \big(L^{(k)}_{n-k}(2\lambda x)\big)^2\,e^{-2\lambda x}{\omega^{k}}(x){\rm d}x=(d_{n,k}^{\nu,\lambda})^2\gamma_{n-k}^{k,\lambda},
\end{equation*}
where we denoted  $d_{n,k}^{\nu,\lambda}:=(2\lambda)^k\,h^{\nu,\nu}_n$ and used the fact: 
\begin{equation*}
\int_0^\infty L^{(m)}_{n-m}(2\lambda x)L^{(k)}_{n-k}(2\lambda x)\,e^{-2\lambda x} \omega^{k}dx=\frac{\gamma_{n-k}^{k}}{(2\lambda)^{k+1}}\delta_{km}=\gamma_{n-k}^{k,\lambda}\delta_{km},
\end{equation*}
Thus we can obtain
\begin{equation*}\begin{aligned}
&\|\pi_N^{-\nu,\lambda} u-u\|^2_{\omega^{-\nu}}
=\sum_{n=N+1}^\infty(\hat{u}_n)^2\gamma_{n}^{\nu,\lambda},
\quad
\big|\pi_N^{-\nu,\lambda} u-u\big|^2_{A^k_{\nu,\lambda}}
=\sum_{n=N+1}^\infty(\hat{u}_nd_{n,k}^{\nu,\lambda})^2\gamma_{n-k}^{k,\lambda}, \\
&\big|u\big|^2_{A^m_{\nu,\lambda}}
=\sum_{n=m}^\infty(\hat{u}_nd_{n,m}^{\nu,\lambda})^2\gamma_{n-m}^{m,\lambda}.
\end{aligned}\end{equation*}
Then one verifies readily that
 \begin{equation*}\begin{aligned}
\|\pi_N^{-\nu,\lambda} u-u\|^2_{\omega^{-\nu}}&\leq\frac{\gamma_{N+1}^{\nu,\lambda}}{(d_{N+1,m}^{\nu,\lambda})^2\gamma_{N+1-m}^{m,\lambda}}\big|u\big|^2_{A^m_{\nu,\lambda}},\\
\quad
\big|\pi_N^{-\nu,\lambda} u-u\big|^2_{A^k_{\nu,\lambda}}&\leq\left(\frac{d_{N+1,k}^{\nu,\lambda}}{d_{N+1,m}^{\nu,\lambda}}\right)^2\frac{\gamma_{N+1-k}^{k,\lambda}}{\gamma_{N+1-m}^{m,\lambda}}\big|u\big|^2_{A^m_{\nu,\lambda}}.
\end{aligned}\end{equation*}
 Recall the property of the Gamma function (see \cite[(6.1.38)]{Abr.I64}):
\begin{equation}\label{stirlingfor}
\Gamma(x+1)=\sqrt{2\pi}
x^{x+1/2}\exp\Big(-x+\frac{\theta}{12x}\Big),\quad \forall\,
x>0,\;\;0<\theta<1.
\end{equation}
One can then obtain that   for any constants  $a, b,$ and for  $n\ge1, $  $n+a>1$ and $n+b>1,$
\begin{equation}\label{Gammaratio}
\frac{\Gamma(n+a)}{\Gamma(n+b)}\le \nu_n^{a,b} n^{a-b},
\end{equation}
where
\begin{equation}\label{ConstUpsilon}
\nu_n^{a,b}=\exp\Big(\frac{a-b}{2(n+b-1)}+\frac{1}{12(n+a-1)}+\frac{(a-b)^2}{n}\Big).
\end{equation}
Therefore,
\begin{equation*}\begin{aligned}
\frac{\gamma_{N+1}^{\nu,\lambda}}{(d_{N+1,m}^{\nu,\lambda})^2\gamma_{N+1-m}^{m,\lambda}}&=\frac{\Gamma(N+2-m)}{(2\lambda)^{\nu+m}\Gamma(N+2+\nu)} \leq (2\lambda)^{-\nu-m} \nu_n^{2-m,2+\nu} N^{-\nu-m}, \\
\left(\frac{d_{N+1,k}^{\nu,\lambda}}{d_{N+1,m}^{\nu,\lambda}}\right)^2\frac{\gamma_{N+1-k}^{k,\lambda}}{\gamma_{N+1-m}^{m,\lambda}}&=\frac{(2\lambda)^{k}\Gamma(N+2-m)}{(2\lambda)^{m}\Gamma(N+2-k)}<(2\lambda)^{k-m} \nu_n^{2-m,2-k} N^{k-m},
\end{aligned}\end{equation*}
where $\nu_n^{2-m,2+\nu}\approx 1$ and $\nu_n^{2-m,2-k}\approx 1$ for fixed $m$ and $n\ge N\gg1 $.
Then \eqref{Thm31}-\eqref{Thm32} follow.
\end{proof}
\subsubsection{Approximation by $\big\{\GLa{\alpha}{n}{(x)}:~\alpha=\nu\geq0\big\}_{n=0}^\infty$. }
Introduce the non-uniformly weighted Sobolev space:
\begin{equation}\label{Soblev Sp2}
B_{\nu,\lambda}^r(\mathbb{R}^+):=\Big\{u\in L^2_{\omega^{\nu}}({\mathbb R}^+): \TD{x}{\infty}{s} u \in L^2_{\omega^{\nu+s}}({\mathbb R}^+),\;0\leq s\leq r\Big\},\quad r\in\mathbb{R}^+_0,
\end{equation}
endowed with norm and semi-norm
\begin{equation}
\|u\|_{B_{\nu,\lambda}^r}:=\Big(\|u\|^2_{\omega^{\nu}}+|u|^2_{B_{\nu,\lambda}^r}\Big)^{{1}/{2}},\quad|u|_{B_\nu^r}:=\|\TD{x}{\infty}{\nu+r}u\|_{\omega^{\nu+r}}.
\end{equation}
Consider
 the orthogonal projection $\Pi_N^{\nu,\lambda}:~{L^2_{\omega^{\nu}}}({\mathbb R}^+)\rightarrow \mathcal{F}^{0,\lambda}_N({\mathbb R}^+),$ defined by
  \begin{equation}\label{Projection2L2}
 \big(\Pi_N^{\nu,\lambda} u-u, ~\phi\big)_{\omega^{\nu}}=0,\quad \forall \phi\in \mathcal{F}^{0,\lambda}_N({\mathbb R}^+),\quad \nu>-1,
 \end{equation}

\begin{thm}\label{ThmAppII}
Let $\lambda,r ,\nu>0$. For any $u\in B^r_{\nu,\lambda}({\mathbb R}^+)
$ with $0\le s\le r\le N,$ we have
\begin{equation}\label{Thm21}
\big\|\TD{x}{\infty}{s} \big\{\Pi_N^{\nu,\lambda} u-u\big\}\big\|_{\omega^{\nu+s}}\leq ~c~({2\lambda}{N})^{\frac{s-r}{2}} ~\|\TD{x}{\infty}{r}u\|_{\omega^{\nu+r}},
\end{equation}
where $c\approx 1$ for large $N.$
\end{thm}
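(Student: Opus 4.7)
\medskip

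\noindent\textbf{Proof proposal.} The plan is to mimic the derivation of Theorem \ref{ThmAppI}, but using the right tempered derivative rule \eqref{TempGLderi2II} in place of \eqref{Tempderirelamuk}, and using the $L^2_{\omega^{\nu}}$-orthogonality of $\{\GLa{\nu}{n}{}\}_{n\ge 0}$ from \eqref{Temp-orth} (note that $\GLa{\nu}{n}{(x)}=e^{-\lambda x}L_n^{(\nu)}(2\lambda x)$ lies in $\mathcal{F}_N^{0,\lambda}(\mathbb{R}^+)$ for $n\le N$, so this is indeed the natural basis for $\Pi_N^{\nu,\lambda}$).

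\medskip

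\noindent\emph{Step 1 (expansion).} Write
$$u(x)=\sum_{n=0}^{\infty}\tilde{u}_n\,\GLa{\nu}{n}{(x)},\qquad \Pi_N^{\nu,\lambda}u(x)=\sum_{n=0}^{N}\tilde{u}_n\,\GLa{\nu}{n}{(x)},$$
with $\tilde{u}_n=(u,\GLa{\nu}{n}{})_{\omega^\nu}/\gamma_n^{\nu,\lambda}$, which is justified by \eqref{Temp-orth} and the definition \eqref{Projection2L2}.

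\medskip

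\noindent\emph{Step 2 (act by $\TD{x}{\infty}{s}$).} By \eqref{TempGLderi2II},
$$\TD{x}{\infty}{s}\GLa{\nu}{n}{(x)}=(2\lambda)^{s}\,\GLa{\nu+s}{n}{(x)}.$$
Combining with the orthogonality \eqref{Temp-orth} at exponent $\alpha=\nu+s\ge 0$ gives
$$\bigl\|\TD{x}{\infty}{s}\GLa{\nu}{n}{}\bigr\|^2_{\omega^{\nu+s}}=(2\lambda)^{2s}\,\gamma_n^{\nu+s,\lambda},$$
and hence
$$\bigl\|\TD{x}{\infty}{s}(u-\Pi_N^{\nu,\lambda}u)\bigr\|^2_{\omega^{\nu+s}}=\sum_{n=N+1}^{\infty}|\tilde{u}_n|^2\,(2\lambda)^{2s}\,\gamma_n^{\nu+s,\lambda}.$$
The same computation with $s$ replaced by $r$ yields
$$\|\TD{x}{\infty}{r}u\|^2_{\omega^{\nu+r}}\ge\sum_{n=N+1}^{\infty}|\tilde{u}_n|^2\,(2\lambda)^{2r}\,\gamma_n^{\nu+r,\lambda}.$$

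\medskip

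\noindent\emph{Step 3 (ratio estimate).} Comparing the two sums tail by tail, it suffices to bound
$$R_n:=\frac{(2\lambda)^{2s}\,\gamma_n^{\nu+s,\lambda}}{(2\lambda)^{2r}\,\gamma_n^{\nu+r,\lambda}}=(2\lambda)^{s-r}\,\frac{\Gamma(n+\nu+s+1)}{\Gamma(n+\nu+r+1)},$$
where I used $\gamma_n^{|\alpha|,\lambda}=\Gamma(n+|\alpha|+1)/[\Gamma(n+1)(2\lambda)^{|\alpha|+1}]$. Since $r\ge s$, the Stirling estimate \eqref{stirlingfor}--\eqref{Gammaratio} gives
$$\frac{\Gamma(n+\nu+s+1)}{\Gamma(n+\nu+r+1)}\le\nu_n^{\nu+s+1,\nu+r+1}\,n^{s-r},\qquad n\ge N+1,$$
with a constant tending to $1$ as $N\to\infty$. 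Therefore $R_n\le c\,(2\lambda N)^{s-r}$ uniformly for $n\ge N+1$, and pulling this factor out of the tail sum produces
$$\bigl\|\TD{x}{\infty}{s}(u-\Pi_N^{\nu,\lambda}u)\bigr\|^2_{\omega^{\nu+s}}\le c\,(2\lambda N)^{s-r}\,\|\TD{x}{\infty}{r}u\|^2_{\omega^{\nu+r}},$$
which, after taking the square root, is exactly \eqref{Thm21}.

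\medskip

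\noindent\emph{Main obstacle.} The argument is structurally parallel to Theorem \ref{ThmAppI}, so no genuinely new ingredient is required. The only subtle point I want to verify carefully is that $\TD{x}{\infty}{s}$ commutes with the (infinite) GLF expansion at fractional, non-integer $s$, which is justified by the closed-form rule \eqref{TempGLderi2II} applied termwise followed by $L^2_{\omega^{\nu+s}}$-orthogonality; once the membership $u\in B^r_{\nu,\lambda}(\mathbb{R}^+)$ ensures that the tail series for $\TD{x}{\infty}{s}u$ is absolutely convergent in the appropriate weighted norm for every $0\le s\le r$, the rest is an entirely routine application of the Gamma-ratio bound \eqref{Gammaratio}.
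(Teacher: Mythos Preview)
Your proposal is correct and follows essentially the same route as the paper: expand in the basis $\{\GLa{\nu}{n}{}\}$, apply the right-derivative rule \eqref{TempGLderi2II} termwise, use the orthogonality \eqref{Temp-orth} at the shifted exponent to get a Parseval-type identity, and bound the tail ratio $\gamma_n^{\nu+s,\lambda}/\gamma_n^{\nu+r,\lambda}$ via the Gamma-ratio estimate \eqref{Gammaratio}. The only cosmetic difference is that the paper evaluates the ratio at $n=N+1$ (using monotonicity implicitly) while you bound it uniformly over $n\ge N+1$; the two are equivalent.
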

\begin{proof}
Note that by definition,
\begin{equation*}
 u-\Pi_N^{\nu,\lambda}u =\sum_{n=N+1}^\infty ~\hat{u}_n  {\mathcal L}_n^{(\nu,\lambda)}(x),\quad \hat{u}_n=\big(u,~ {\mathcal L}_n^{(\nu,\lambda)}\big)_{\omega^{\nu}}\big/\gamma^{\nu,\lambda}_n.
 \end{equation*}
Then by   \eqref{TempGLderi2II}, and the orthgogonality,
\begin{equation*}
 \|\TD{x}{\infty}{s}\GLa{\nu}{n}{}\|^2_{\omega^{\nu+s}}=\|\GLa{\nu+s}{n}{}\|^2_{\omega^{\nu+s}}=\gamma_{n}^{\nu+s,\lambda},
\end{equation*}
we can  derive
\begin{equation*}\begin{aligned}
\big|\Pi_N^{\nu,\lambda}u-u\big|^2_{B^s_{\nu,\lambda}}&=\|\sum_{n=N+1}^\infty \hat{u}_n (2\lambda)^s\GLa{\nu+s}{n}{} \|^2_{\omega^{\nu+s}}=\sum_{n=N+1}^\infty(\hat{u}_n)^2(2\lambda)^{2s}\gamma_{n}^{\nu+s,\lambda},
\\
\big|u\big|^2_{B^r_{\nu+r,\lambda}}&=\|\sum_{n=0}^\infty \hat{u}_n(2\lambda)^r  \GLa{\nu+r}{n} \|^2_{\omega^{\nu+r}}=\sum_{n=0}^\infty(\hat{u}_n)^2(2\lambda)^{2r}\gamma_{n}^{\nu+r,\lambda}.
\end{aligned}\end{equation*}
Then,
 \begin{equation*}\begin{aligned}
\big|\Pi_N^{\nu,\lambda}u-u\big|^2_{B^s_{\nu,\lambda}}&=\sum_{n=N+1}^\infty(\hat{u}_n)^2\gamma_{n}^{\nu+s,\lambda}
\leq (2\lambda)^{2s-2r}\frac{\gamma_{N+1}^{\nu+s,\lambda}}{\gamma_{N+1}^{\nu+r,\lambda}}\sum_{n=N+1}^\infty(\hat{u}_n)^2\gamma_{n}^{\nu+r,\lambda},
\end{aligned}\end{equation*}
where by \eqref{Gammaratio}-\eqref{ConstUpsilon}, we obtain
$$\dfrac{\gamma_{N+1}^{\nu+s,\lambda}}{\gamma_{N+1}^{\nu+r,\lambda}}=\dfrac{(2\lambda)^r \Gamma(N+\nu+s+2)}{(2\lambda)^s \Gamma(N+\nu+r+2)}\leq c ({2\lambda})^{r-s}N^{s-r}.$$
 Consequently, we have
\begin{equation*}
\|\TD{x}{\infty}{s} \big\{\Pi_N^{\nu,\lambda} u-u\big\}\|_{\omega^{\nu+s}}\leq ~c\, ({2\lambda}N)^{\frac{s-r}{2}} \,|u|_{B^r_{\nu,\lambda}}.
\end{equation*}
This ends the proof.
\end{proof}

\subsection{A model problem and numerical results} \label{sub33}
In what follows, we consider the GLF approximation to  a model tempered fractional equation of order
$s\in[k-1,k)$ with $k\in\mathbb{N}:$
 \begin{equation}\label{problemoriginal}
\TD{0}{x}{s}u(x)=f(x), \;\; x\in \mathbb{R}^+,\;\; \lambda>0; \;\;   u^{(j)}(0)=0,\quad j=0,1,\ldots,k-1, 
\end{equation}
where $f\in L^2(\mathbb{R}^+)$ is a given function.
Using the fractional derivative relation \eqref{rulesa}, one can  find 
\begin{equation*}
u(x)=\TI{0}{x}{s}f(x)+\sum\limits_{i=1}^kc_i\, x^{s-i}e^{-\lambda x},
\end{equation*}
where $\{c_i\}$ can be determined by the conditions at $x=0.$
In fact, we have all $c_i=0,$ and
\begin{equation}\label{SolutionExact2}
u(x)=\TI{0}{x}{s}f(x)=\frac{e^{-\lambda x}}{\Gamma(s)} \int_0^x (x-\tau)^{s-1} e^{\lambda \tau} f(\tau){\rm d}\tau= \frac{x^s}{\Gamma(s)}\int_0^1 (1-t)^{s-1} e^{-\lambda (1-t)x}f(xt) {\rm d}t.
\end{equation}
We see that if $f(x)$ is smooth, then $u(x)=x^s F(x),$ where $F(x)$ is  smoother than $f(x).$
With this understanding, we construct the GLF Petrov-Galerkin  approximation as:  find $u_N\in\mathcal{F}^{s,\lambda}_N(\mathbb{R}^+)$ (defined in \eqref{SpaceGLF1}) such that
\begin{equation}\label{GP-initial}
(\TD{0}{x}{s} u_N,v_N)=(f,v_N), \quad \forall\, v_N\in \mathcal{F}^{0,\lambda}_N(\mathbb{R}^+).
\end{equation}
We expand $f$ and $u_N$ as
\begin{equation}\label{expand}
 f(x)=\sum_{n=0}^\infty ~\hat{f}_n \GLa{0}{n}{(x)}, ~\quad u_N=\sum_{n=0}^N ~\hat{u}_n \GLa{s}{n}{(x)} .
 \end{equation}
 Using the derivative relation \eqref{TempGLderi2}, we find immediately that $\hat u_n=\frac{\gamma_n^{0,\lambda}}{h_n^{s,s}}\hat f_n$ for $n=0,1,\cdots,N$,
which also implies $\TD{0}{x}{s} u_N=\pi^{0,\lambda}_N f.$

Moreover, we can show that the numerical solution $u_N$  is precisely the orthogonal projection in the following sense:
\begin{equation}\label{unprojn}
( u_N-u\,,\,w_N)_{\omega^{-s}}=0,\quad\forall\, w_N\in\mathcal{F}^{s,\lambda}_N(\mathbb{R}^+).
\end{equation}
To this end, we first show
\begin{equation}\label{forAppendixproof-a}
( u_N-u\,,\,\TD{x}{\infty}{s}v_N)=(\TD{0}{x}{s} u_N-\TD{0}{x}{s} u,v_N)=0,\quad\forall\, v_N\in\mathcal{F}^{0,\lambda}_N(\mathbb{R}^+).
\end{equation}
Indeed, thanks to $u^{(j)}(0)=0$ for $j=0,\ldots,k-1$, we have
\begin{equation*}
\TD{0}{x}{s}\{u_N-u\}=e^{-\lambda x}\D{0}{x}{s} \{e^{\lambda x} (u_N-u)\}=e^{-\lambda x}\I{0}{x}{k-s}\D{0}{x}{k} \{e^{\lambda x} (u_N-u)\}.
\end{equation*}
Then,
\begin{equation*}\begin{aligned}
&\left(\TD{0}{x}{s} u_N-\TD{0}{x}{s} u\,,\,v_N\right)=\left(\I{0}{x}{k-s}\D{0}{x}{k} \{e^{\lambda x} (u_N-u)\},e^{-\lambda x}v_N\right)
\\& \qquad=\left( e^{\lambda x} (u_N-u)\,,\,(-1)^k{\rm D}^{k}\TI{x}{\infty}{k-s}v_N\right)
=\left(u_N-u\,,\,\TD{x}{\infty}{s}v_N\right),
\end{aligned}\end{equation*}
so  \eqref{forAppendixproof-a} is valid.
In addition, thanks to  Lemma \ref{Lemmalagupolyderi} and  \eqref{transform2 xt}, we have
$$\TD{x}{\infty}{s}\GLa{0}{n}{(x)}=e^{\lambda x}\D{x}{\infty}{s}\big\{e^{-2\lambda x}L^{(0)}_{n}(2\lambda{x})\big\}=(2\lambda)^s e^{-\lambda x}L^{(s)}_{n}(2\lambda{x})=(2\lambda)^sx^{-s}\GLa{-s}{n}{(x)}.$$
Hence, \eqref{unprojn} is valid.

Thanks to  \eqref{unprojn}, we derive from  Theorem \ref{ThmAppI} the following estimate where the convergence rate only  depends  on the regularity of the source term.
  \begin{thm}\label{ThmAppRight} Let $u$ and $u_N$ be respectively the solutions of \eqref{problemoriginal} and \eqref{GP-initial}. Then for  $\TD{0}{x}{m}f\in L^2_{\omega^m}(I)$ with  $m\in {\mathbb N}_0,$ we have   
\begin{equation}\label{Thm11}
\|u-u_N\|_{\omega^{-s}}\leq \,c\,(2\lambda N)^{-\frac{s+m}{2}} \,\|\TD{0}{x}{s+m}u\|_{\omega^{m}}=c\,(2\lambda N)^{-\frac{s+m}{2}}\,\|\TD{0}{x}{m}f\|_{\omega^{m}},
\end{equation}
where $c\approx 1$ for large $N.$
  \end{thm}

We provide some numerical results to illustrate the convergence behaviour.    We take $f(x)= e^{-x}\sin x$ and then evaluate the exact solution by \eqref{SolutionExact2}.  Note that as   $\TD{0}{x}{m}f=e^{-\lambda x} \D{}{}{m}\{e^{\lambda x}f\}$,  a direct calculation leads to
\begin{equation*}
e^{-\lambda x}\D{}{}{m}\{e^{\lambda x} f\}=e^{-\lambda x}\sum_{k=0}^m {m \choose{k}} \lambda^{m-k}e^{\lambda x} \D{}{}{k}f=\sum_{k=0}^m {m \choose{k}} \lambda^{m-k} \D{}{}{k}f.
\end{equation*}
We infer from  \eqref{Thm11} that the spectral accuracy can be achieved  by the GLF approximation.  Indeed, we observe from Figure \ref{InitialP} such a convergence behaviour.  

%
%

\begin{figure}[htp]
\begin{minipage}{0.45\linewidth}
\begin{center}
\includegraphics[scale=0.370]{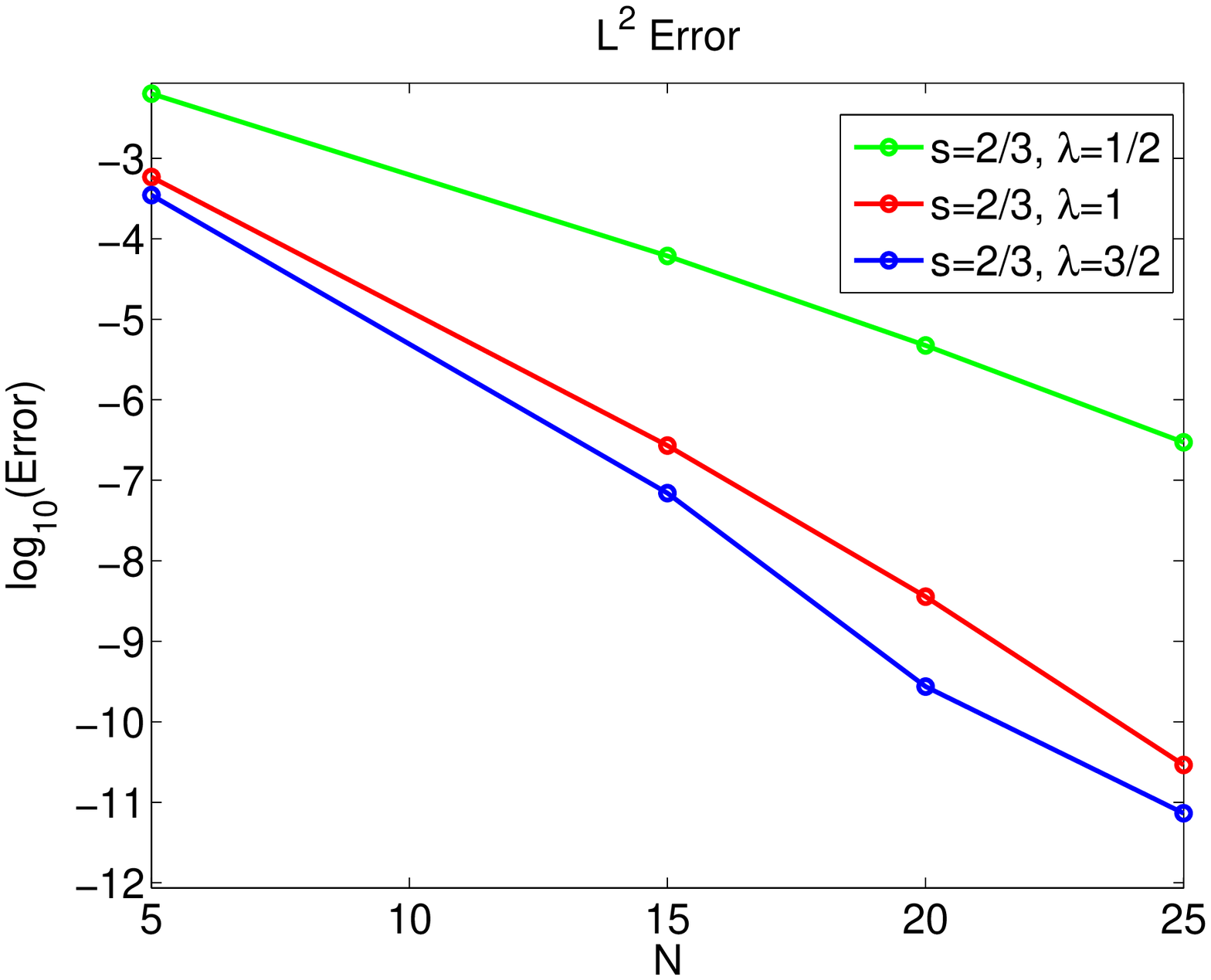}\end{center}
\end{minipage}
\begin{minipage}{0.45\linewidth}
\begin{center}
\includegraphics[scale=0.370]{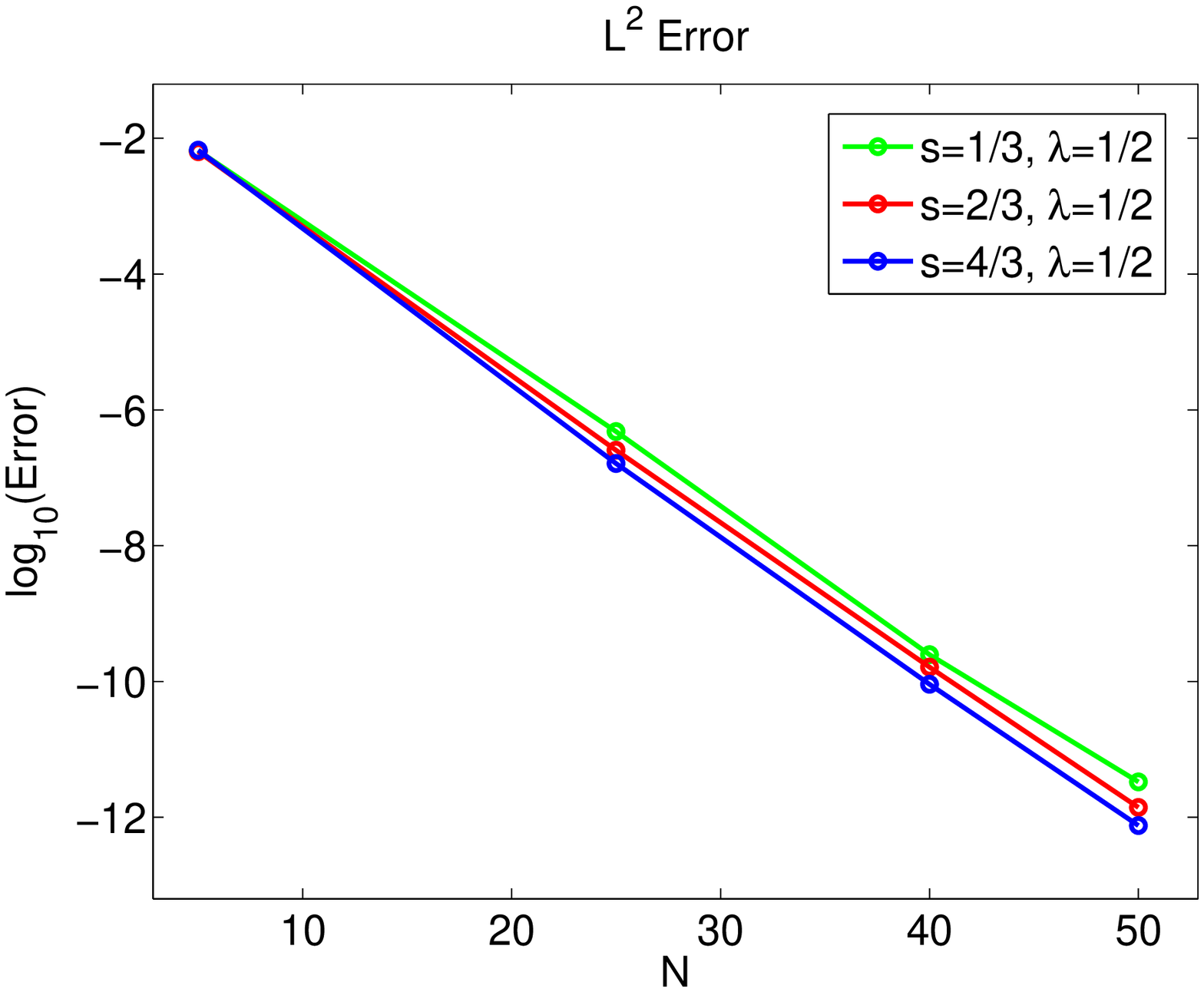}\end{center}
\end{minipage}
\caption{\small Convergence of the  GLF approximation to (\ref{problemoriginal}) with $f(x)=e^{-x} \sin x\,.$ }\label{InitialP}
\end{figure}

\section{Application to  Tempered fractional diffusion equation on the half line}\label{SectionApp2}

In this section,  we apply the GLFs to approximate a tempered fractional diffusion equation on the half-line.

\subsection{The tempered fractional diffusion equation on the half line}\label{subsectionspecial}

Consider the tempered fractional diffusion equation of order $\mu\in (0,1)$ on the half line:
\begin{equation}\begin{split}\label{temperFDEhalf}
\begin{cases}
\partial_t u(x,t)+\TD{0}{x}{\mu}u(x,t)-\lambda^\mu u(x,t)=f(x,t),\quad &(x,t)\in\mathbb{R}^+\times(0,T],
\\ u(0,t)=0,\quad  \lim\limits_{x\rightarrow \infty}u(x,t)=0,\quad & 0<t\leq T,
\\u(x,0)=u_0(x), \quad &x\in\mathbb{R}^+.
\end{cases}
\end{split}\end{equation}

This equation models the particles jumping on the half line $\mathbb{R}^+$ with the probability density function (see \cite[(8)]{temperedsabzikar2014}):
\begin{equation*}
f_{\varepsilon}(x)=C^{-1}_{\varepsilon}x^{-\mu-1}e^{-\lambda x}\mathbf{1}_{({\varepsilon,\infty})}(x),\quad 0<\mu<1.
\end{equation*}

\begin{rem}
We note that equation \eqref{temperFDEhalf} is equivalent to  the half line form of the TFDE \eqref{temperFDEoriginal}  in which
$$\partial_{+,x}^{\mu,\lambda} u=\TD{0}{x}{\mu}u-\lambda^\mu u,\quad 0<\mu<1. $$
Indeed, we can show that for $\mu\in (0,1)$ and real $\lambda>0$,
$$\TD{0}{x}{\mu}u=e^{-\lambda x}\D{0}{x}{\mu}\big\{e^{\lambda x}{{u}(x)}\big\}=e^{-\lambda x}\D{-\infty}{x}{\mu}\big\{e^{\lambda x}{\tilde{u}(x)}\big\},\quad x\in\mathbb{R}^+,$$
where $\tilde u=u$ for  $x\in \mathbb{R}^+$ and $\tilde u=0$ for $x\in (-\infty,0)$.  Moreover, we have
 \begin{equation*}\begin{split}
{\mathscr F}\big[e^{-\lambda x}\D{-\infty}{x}{\mu}\big\{e^{\lambda x}{\tilde{u}(x)}\big\}\big](\omega)=&\int_{\mathbb{R}} \D{}{}{}\I{-\infty}{x}{1-\mu}\big\{e^{\lambda x}{\tilde{u}(x)}\big\}~e^{-(\lambda+\ri \omega) x}{\rm d} x
\\=(\lambda+\ri \omega) &\int_{\mathbb{R}} \I{-\infty}{x}{1-\mu}\big\{e^{\lambda x}{\tilde{u}(x)}\big\}~e^{-(\lambda+\ri \omega) x}{\rm d} x
\\{=}(\lambda+\ri \omega)& \int_{\mathbb{R}}e^{\lambda x}{\tilde{u}(x)}~\I{x}{\infty}{1-\mu}e^{-(\lambda+\ri \omega) x}{\rm d} x
\\=(\lambda+\ri \omega)^\mu& {\mathscr F}[\tilde {u}](\omega)
\overset{\eqref{Temdef2FourierB}}{=}{\mathscr F}\big[\TD{-\infty}{x}{\mu}{{\tilde{u}(x)}}\big](\omega).
\end{split}\end{equation*}
This implies  $\tilde{u}\in W^{\mu,2}_\lambda(\mathbb{R})$  and  the extended tempered fractional derivative $\TD{0}{x}{\mu}u$  can be understood
in the sense of the original definition in  \cite{temperedsabzikar2014}.  \qed
\end{rem}

\subsection{Spectral-Galerkin  scheme} Define
  $$H^{\mu,\lambda}(\mathbb{R}^+):=\big\{v\in L^2(\mathbb{R}^+):~\TD{0}{x}{s}v\in L^2(\mathbb{R}^+),~0<s\leq \mu \big\},\;\;\; s,\lambda>0,\;\; \mu\in (0,1), $$
  with the semi norm and norm
$$|v|_{\mu,\lambda}=\|\TD{0}{x}{\mu}v\|,\quad\|v\|_{\mu,\lambda}=\big(\|v\|^2+ |v|^2_{\mu,\lambda}\big)^{1/2}.$$
Furthermore, let
$H^{\mu,\lambda}_{0}(\mathbb{R}^+)$ be the closure of $C_0^\infty(\mathbb{R}^+)$ with respect to the norm $\|\cdot\|_{\mu,\lambda}$.

Thanks to the homogeneous boundary condition and  \eqref{integralbypart},  a weak form of  \eqref{temperFDEhalf} is  to find $u(\cdot,t)\in H^{\mu,\lambda}_{0}(\mathbb{R}^+)$
 such that
 \begin{equation}\label{weakformhalf}
 \big(\partial_tu(\cdot, t),v\big)+a_\mu\big(u(\cdot, t),v\big)=\big(f(\cdot, t),v\big), \quad \forall v\in H^{\mu,\lambda}_{0}(\mathbb{R}^+),\\
 \end{equation}
 with $ u(x,0)=u_0(x),$ where
 \begin{equation}\begin{split}\label{Apq}
 a_\mu(u,v):&=(\TD{0}{x}{\mu}u,v)-\lambda^\mu(u,v).
 \end{split}\end{equation}

 The semi-discrete Galerkin approximation scheme  is  to find
 $u_N(\cdot,t)\in \mathcal{F}^{\nu,\lambda}_N({\mathbb R}^+) $
 such that
 \begin{equation}\label{Galerkinscheme_half}
 \big(\partial_tu_N(\cdot, t),v\big)+a_\mu\big(u_N(\cdot, t),v\big)=\big(f(\cdot, t),v\big), \quad \forall v\in \mathcal{F}^{\nu,\lambda}_N({\mathbb R}^+),
 \end{equation}
 with
 $$u_N(x,0)=u_{0,N}(x)=\sum_{n=0}^{N}c_{0,n}\,\GLa{-\nu}{n}{(x)}.$$
 Here, we choose $\max\big\{0,\mu-\frac{1}{2}\big\}<\nu\leq1$  so that $u(0,t)=0$.

 Now,  we  set
 \begin{equation}\label{numerexpan}
 u_N(x,t)=\sum_{n=0}^{N}c_n(t)\, \varphi_{n}({x}), \quad     \varphi_{n}({x}):=\GLa{-\nu}{n}{(x)}.  
 \end{equation}
We derive from the scheme  \eqref{Galerkinscheme_half}  that 
 \begin{equation}\label{MatrixSystem}
 \mathbf{M}\frac{d}{dt} \vec{\mathbf{c}}(t)+\mathbf{A}\vec{\mathbf{c}}(t)=\vec{\mathbf{f}}(t); \quad \vec{\mathbf{c}}(0)=\vec{\mathbf{c}}_0.
 \end{equation}
 where for fixed $t>0$, vectors
 \begin{equation}\begin{split}
 & \vec{\mathbf{c}}(t)=\big(c_0(t),c_1(t),\ldots,c_N(t)\big)^T,\quad\vec{\mathbf{{c}}}_0=\big(c_{0,0}(t),c_{0,1}(t),\ldots c_{0,N}(t)\big)^T,
 \\ &\vec{\mathbf{f}}(t)=\big(f_0(t),f_1(t),\ldots f_N(t)\big)^T,\qquad f_n(t)=(f,\varphi_{n}),\quad{0\leq n\leq N}.
 \end{split}\end{equation}
 and
 \begin{equation}\begin{aligned}
 \mathbf{M}_{mn}&=(\varphi_{n},\varphi_{m}),\quad \mathbf{A}_{mn}&=a_\mu(\varphi_{n},\varphi_{m}), \quad m,n=0,1,2,\ldots,N.
 \end{aligned}\end{equation}
%
\subsection{Numerical results} For clarity, we test three cases:

(i). $u(x,t)= xe^{-\lambda x}\cos{(t)}$. By a direct calculation, the source term is given by
 $$f(x,t)=-xe^{-\lambda x}\sin(t)+\big(\dfrac{\Gamma(2)}{\Gamma(2-\mu)}x^{1-\mu}-\lambda^\mu x\big)e^{-\lambda x}\cos{(t)}.$$
The left of Figure \ref{TFDEuf} illustrates that error decays to zero dramatically when using the spectral method with basis $\GLa{-1}{n}{(x)}$ in the space and the third-order explicit Runge-Kutta method in time direction for $\lambda=2/3,~\mu=2/3$.

(ii). $f(x,t)=\cos(x)e^{-x}\sin(t)$ and fix  $\lambda$ and $\mu$ as before, the right graph verifies  that the solution is singular even though  $f(x,t)$ is a smooth function. 
\begin{figure}[htp]
\begin{minipage}{0.48\linewidth}
\begin{center}
\includegraphics[scale=0.370]{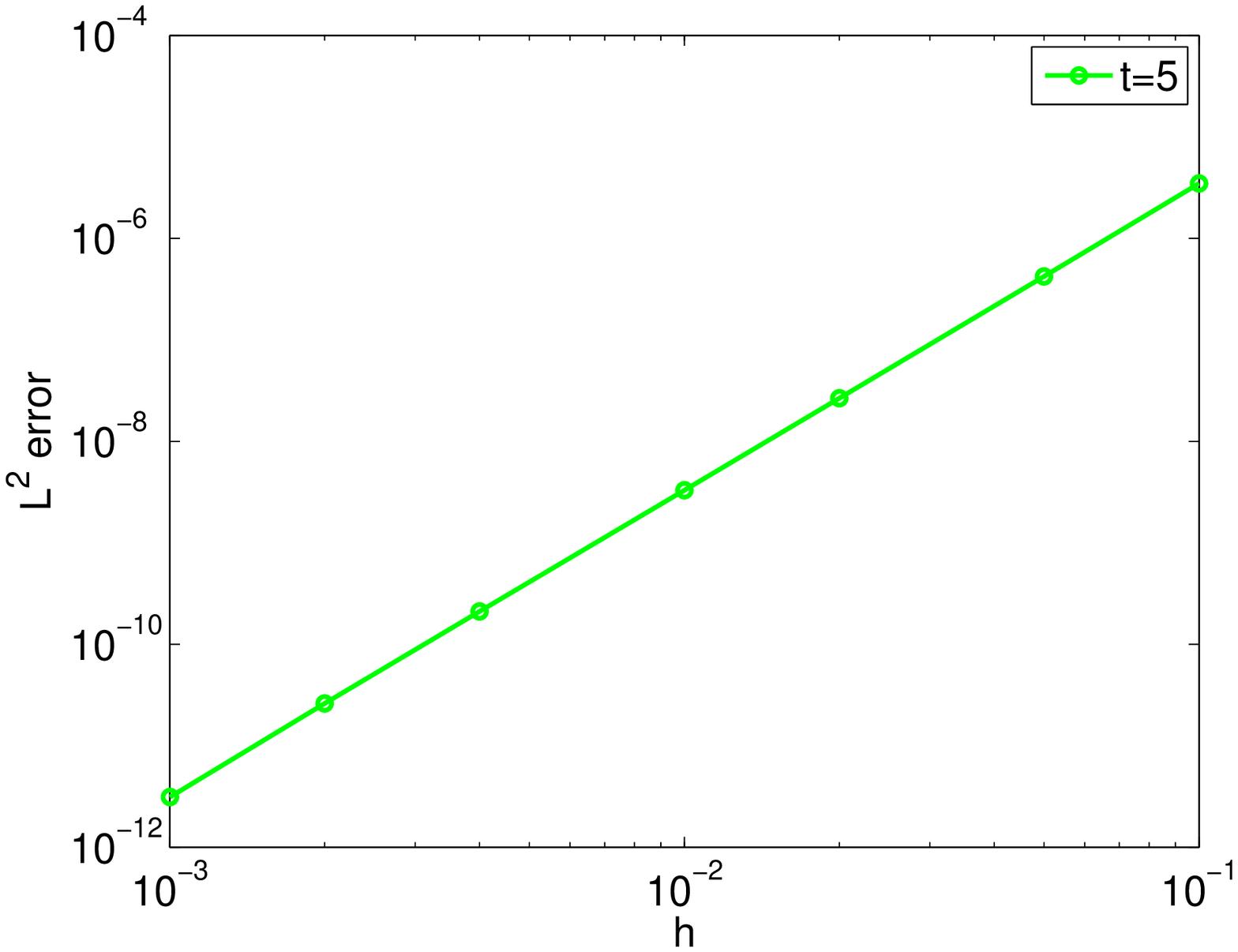}\end{center}
\end{minipage}
\begin{minipage}{0.48\linewidth}
\begin{center}
\includegraphics[scale=0.370]{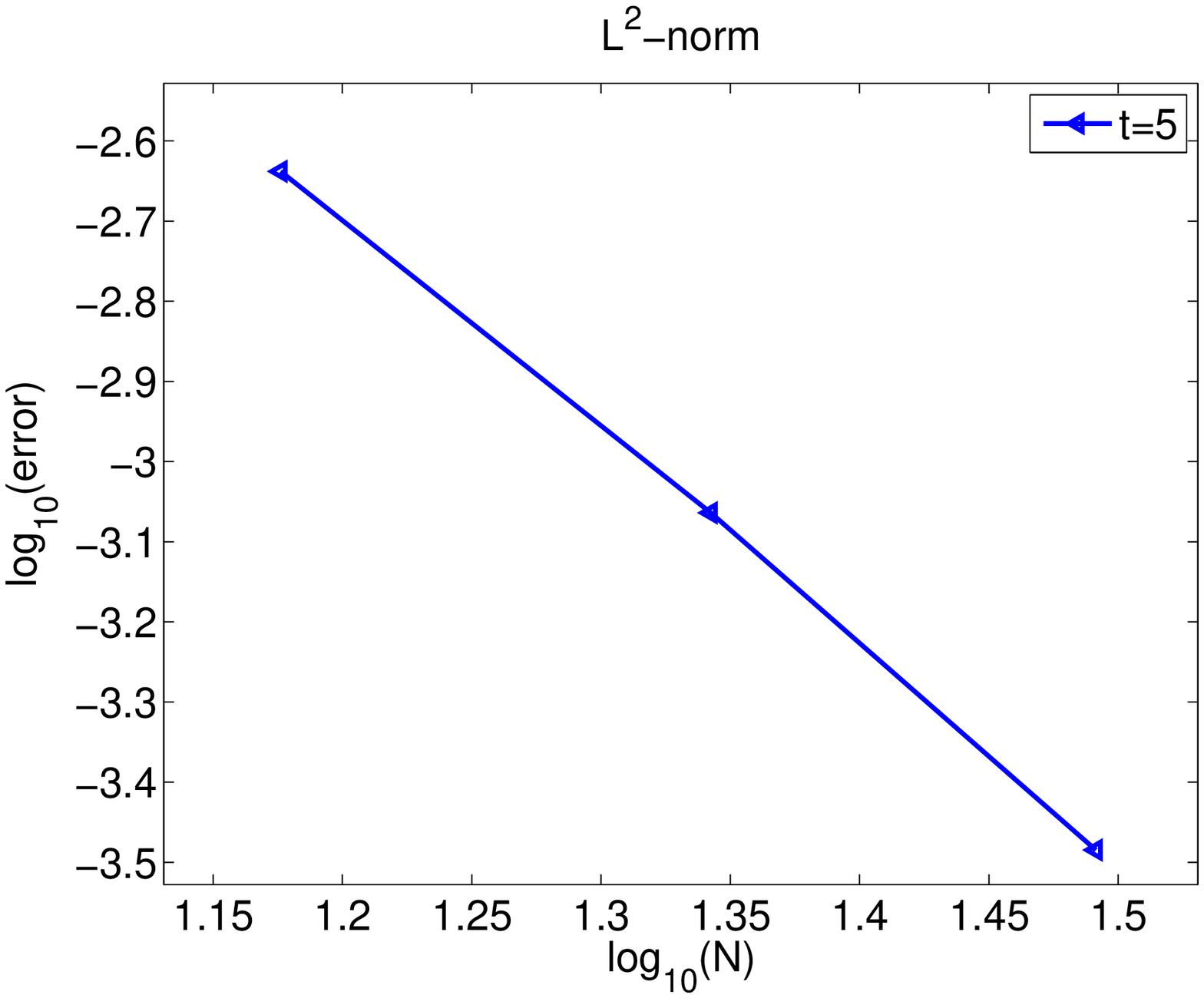}\end{center}
\end{minipage}
\caption{Left: $u=x\exp(-\lambda x)\cos(t)$.\quad Right: $f=\cos(x)\exp(-x)\sin(t)$. }\label{TFDEuf}
\end{figure}

(iii). Consider the case $f(x,t)\equiv0$. Let $\mu=2/3,~\lambda=2/3$ in \eqref{temperFDEhalf}. The left of the Figure \ref{TFDEf02} exhibits the evolution of the tempered fractional diffusion model with the initial distribution $u_0(x)=xe^{-x}$. The right describes the approximate rate  by diverse basis $\GLa{-\nu}{n}{(x)}$ at time $t=10$.
\begin{figure}[htp]
\begin{minipage}{0.48\linewidth}
\begin{center}
\includegraphics[scale=0.370]{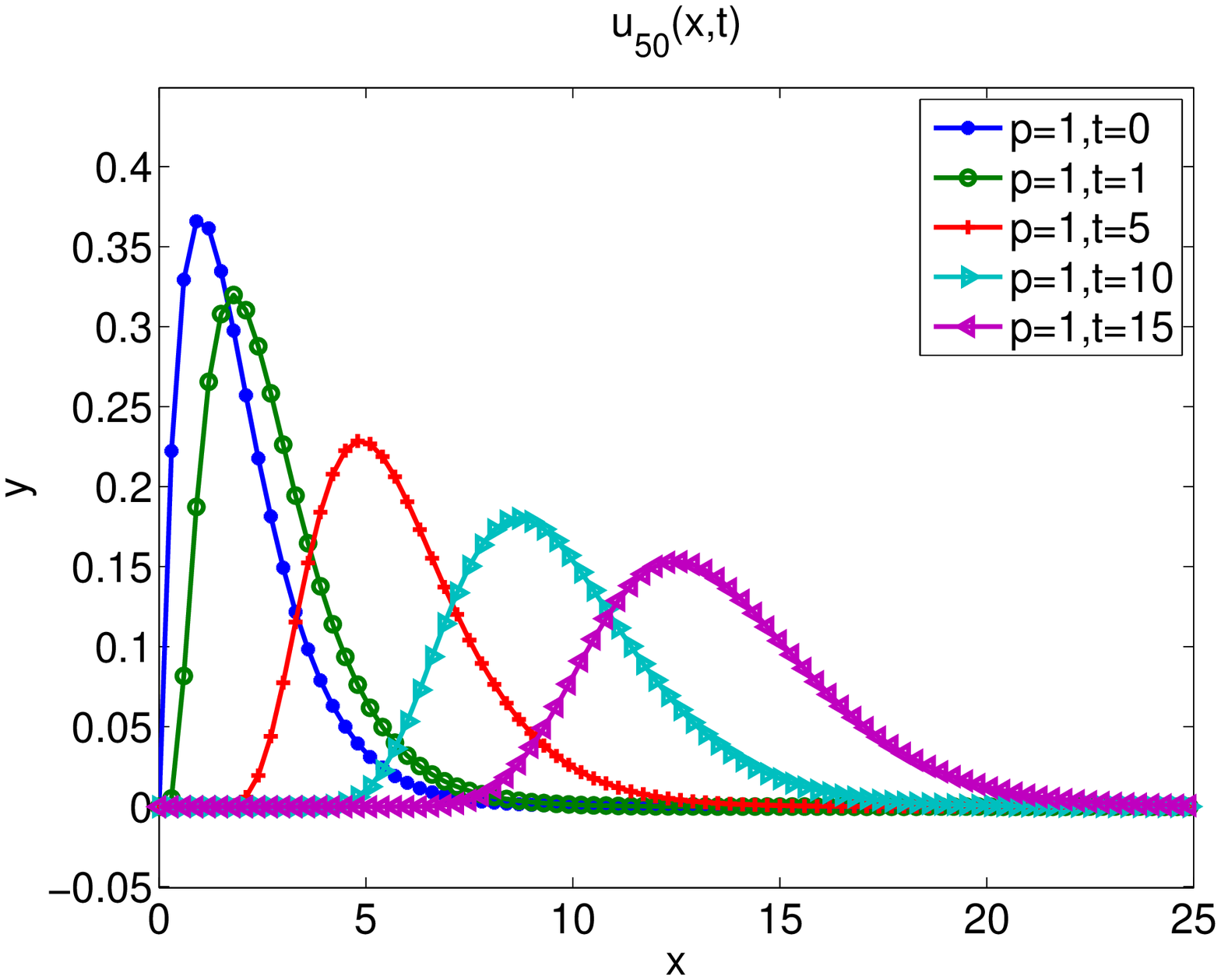}\end{center}
\end{minipage}
\begin{minipage}{0.48\linewidth}
\begin{center}
\includegraphics[scale=0.370]{}\end{center}
\end{minipage}
\caption{ TFDEs: $f\equiv 0$,  $\lambda=2/3,~\mu=2/3.$}\label{TFDEf02}
\end{figure}

\section{Tempered fractional diffusion equation on  the whole line}\label{SectionTFDE}

In this section, we present a  spectral-element method with two-subdomains for the tempered fractional diffusion equation on the whole line originally proposed by
\cite{temperedsabzikar2014}.

 \subsection{Tempered fractional diffusion equation}
We consider the tempered fractional diffusion equation 
  of order  $\mu\in(k-1,k),~k=1,2$ on the whole line:
\begin{equation}\begin{cases}\label{tempfracdiffuequa}
\partial_t u(x,t)=(-1)^k C_T\big\{p\partial_{+,x}^{\mu,\lambda}+q\partial_{-,x}^{\mu,\lambda}\big\}u(x,t)+f(x,t),\\
u(x,0)=u_0(x), \quad \lim_{|x|\rightarrow \infty} u(x,t)=0,
\end{cases}\end{equation}
where  $p,q$ are constants such that $0\leq p,q\leq1,~p+q=1,$ $C_T$ is a constant and the fractional operators are
\begin{itemize}
\item For $0<\mu<1,$
\begin{equation}\begin{aligned}\label{temprelamu1}
\partial_{+,x}^{\mu,\lambda}u=\TD{-\infty}{x}{\mu}u-\lambda^\mu u,\quad\partial_{-,x}^{\mu,\lambda}u=\TD{x}{\infty}{\mu}u-\lambda^\mu u;
\end{aligned}\end{equation}
 \item For $1<\mu<2$,
 \begin{equation}\label{temprelamu2}
 \partial_{+,x}^{\mu,\lambda}u=\TD{-\infty}{x}{\mu}u-\mu\lambda^{\mu-1}\partial_x u-\lambda^\mu u,\quad
\partial_{-,x}^{\mu,\lambda}u=\TD{x}{\infty}{\mu}u+\mu\lambda^{\mu-1}\partial_x u-\lambda^\mu u.
\end{equation}
\end{itemize}
  \subsection{A two-domain spectral-element method}
Let $\Lambda:=(a,b), -\infty\leq a<b\leq\infty$, and $\omega>0$ be a generic  weight function. For any $m\in\mathbb{N}$ and a given weight function $\omega$, we denote
$$H_\omega^m(\Lambda):=\big\{v\in L_\omega^2(\Lambda):~\partial_x^kv\in L_\omega^2(\Lambda),~0<k\leq m \big\}$$ with the semi-norm and norm
$$|v|_{m,\omega,\Lambda}=\|\partial_x^m v\|_{\omega,\Lambda},\quad\|v\|_{m,\omega,\Lambda}=\Big(\sum\limits_{k=0}^m |v|^2_{k,\omega,\Lambda}\Big)^{1/2}.$$
In particular, we omit the subscript $\omega$ when  $\omega\equiv1$.

Moreover, for real $r>0$, we define
$$H^{r,\lambda}(\mathbb{R}):=\big\{v\in L^2(\mathbb{R}):~\TD{-\infty}{x}{r}v\in L^2(\mathbb{R})\big\}$$ with the semi norm and norm
$$|v|_{r,\lambda}=\|\TD{-\infty}{x}{r}v\|,\quad\|v\|_{r,\lambda}=\big(\|v\|^2+ |v|^2_{r,\lambda}\big)^{1/2}.$$

We decompose the whole line as follows
$$\mathbb{R}={\Lambda}_1\cup{\Lambda}_2, \quad \Lambda_1=(-\infty,0),\quad \Lambda_2=[0,\infty),$$
and denote $u_{\Lambda_j}(x,t):=u(x,t)\big|_{\Lambda_j},~j=1,2$.  Introduce the approximation space:
 \begin{equation}\label{appsps}
V^{\lambda}_{\bs N}({\mathbb R}) :=\Big\{\phi\in C(\mathbb{R})\,:\,  \phi_{\Lambda_i}(x)=e^{-\lambda |x|} p,\;\; p_{\Lambda_i} \in\mathcal{P}_{N_i}(\Lambda_i),\;\; i=1,2\Big\},\quad \bs N=(N_1,N_2),
 \end{equation}
and define
\begin{equation}\label{basis}
\phi^{\ast}(x)=e^{-\lambda|x|},\;\;
\phi^{-}_{n_1}(x)=\begin{cases}
\GLa{-1}{n_1}{(-x)},&x\leq0,\\[4pt]
0,&x>0,
\end{cases}
\;\;
\phi^{+}_{n_2}(x)=\begin{cases}
0,&x\leq0,\\[4pt]
\GLa{-1}{n_2}{(x)},&x>0,
\end{cases}
\end{equation}
where $\GLa{-1}{n}(x)=e^{-\lambda x}xL_n^{(1)}(2\lambda x)$.   One verifies readily that
 \begin{equation}\label{spansps}
V^{\lambda}_{\bs N}({\mathbb R}) ={\rm span}\big\{\phi^{\ast}(x); \;\;  \phi^{-}_{n_1}(x), \;\; 0\le n_1\le N_1-1; \;\; \phi^{+}_{n_2}(x),\;\;  0\le n_2\le N_2-1\big\}.
 \end{equation}
%
Then, our semi-discrete spectral-Galerkin method is to find
 $u_{\bs N}(\cdot,t)\in V^{\lambda}_{\bs N}({\mathbb R}) $
 such that
 \begin{equation}\label{Galerkinscheme}
 \begin{cases}
 \big(\partial_tu_{\bs N}(\cdot, t),v\big)+a^\mu_{pq}\big(u_{\bs N}(\cdot, t),v\big)=\big(f(\cdot,t),v\big), \quad & \forall v\in  V^{\lambda}_{\bs N}({\mathbb R}),\\[4pt]
 \big(u_{\bs N}(\cdot,0),v)=(u_{0},v),\quad & \forall v\in  V^{\lambda}_{\bs N}({\mathbb R}),
 \end{cases}
 \end{equation}
  where the bilinear form $a^\mu_{pq}(\cdot,\cdot)$ is defined by
 \begin{equation}\label{apq}
 a^\mu_{pq}(u,v):=\begin{cases}
\quad p(\TD{-\infty}{x}{\mu}u,v)+q(u,\TD{-\infty}{x}{\mu}v) -\lambda^\mu(u,v),~\qquad & 0<\mu<1,\\
-\big\{p(\TD{-\infty}{x}{\mu-1}u,\TD{x}{\infty}{1}v)+q(\TD{x}{\infty}{1}u,\TD{-\infty}{x}{\mu-1}v)\big\}\\
\qquad\quad\,+\lambda^\mu(u,v)+(p-q)\mu\lambda^{\mu-1}(\partial_xu,v)
,~~\quad & 1<\mu<2.
 \end{cases}
 \end{equation}

We provide below some details of the algorithm.

 \begin{equation}\begin{aligned}\label{numerexpan}
 u_{\bs N}(x,t)&=c^{\ast}(t)\phi^{\ast}({x})+\sum_{n_1=0}^{N_1-1}c^{-}_{n_1}(t)\phi^{-}_{n_1}({x})+\sum_{n_2=0}^{N_2-1}c^{+}_{n_2}(t)\phi^{+}_{n_2}({x}),\\
 u_{\bs N}(x,0)&=c^{\ast}_0\phi^{\ast}({x})+\sum_{n_1=0}^{N_1-1}c^{-}_{0,n_1}\phi^{-}_{n_1}({x})+\sum_{n_2=0}^{N_2-1}c^{+}_{0,n_2}\phi^{+}_{n_2}({x}).
 \end{aligned}\end{equation}
 Let $H(x)$ be the Heaviside  function as before.
 Thanks to the tempered fractional derivative and integral relations with GLFs, and a reflected mapping from positive half line $\mathbb{R}^+$ to negative half line $\mathbb{R}^-$,  we can derive the following identities (see Appendix \ref{AppenixC}):
 \begin{equation}\label{TDphi}
 \begin{split}
&\TD{x}{\infty}{1}\phi^{\ast}(x)=-2\lambda e^{-\lambda x}H(x),\quad \TD{x}{\infty}{1}\phi^{-}_{n_1}(x)=(n_1+1)\GLa{0}{n_1}{(-x)}H(-x),\\[6pt]
& \TD{-\infty}{x}{s}\phi^{\ast}(x)=
\begin{cases}
(2\lambda )^{s}e^{\lambda x},&x\leq 0,\\[6pt]
\dfrac{2\lambda e^{\lambda x}}{\Gamma(1-s)}\displaystyle \int_x^{\infty}\dfrac{ e^{-2\lambda t}}{t^s} {\rm d}t
,&x>0,
\end{cases}\\[6pt]
& \TD{-\infty}{x}{s}\phi^{-}_{n_1}(x)=
\begin{cases}
-(2\lambda)^{s-1}(n_1+1)L^{(s-1)}_{n_1+1}{(-2\lambda x)}e^{\lambda x},&x\leq0,\\[6pt]
-e^{\lambda x}\dfrac{{n_1}+1}{\Gamma(1-s)}\displaystyle\int_x^{\infty}\dfrac{L^{(0)}_{{n_1}+1}{(2\lambda (t-x))}e^{-2\lambda t}}{t^s} {\rm d}t
,&x>0,
\end{cases}\\[6pt]
&\TD{x}{\infty}{1}\phi^{+}_{n_2}(x)=-(n_2+1)\GLa{0}{n_2+1}{(x)}H(x),\quad \\[6pt]
&\TD{-\infty}{x}{s}\phi^{+}_{n_2}(x)=\dfrac{\Gamma(n_2+2)}{\Gamma(n_2+2-s)}x^{1-s}\GLa{1-s}{n_2}{(x)}H(x).
\end{split}\end{equation}
\noindent Then \eqref{Galerkinscheme} leads to the system 
 \begin{equation}\label{MatrixSystem}
 \mathbf{M}\frac{d}{dt} \overrightarrow{{\mathbf{C}}}(t)+\mathbf{A}\overrightarrow{{\mathbf{C}}}(t)=\overrightarrow{{\mathbf{F}}}(t),
 \end{equation}
 where
 \begin{equation*}\begin{split}
 &\overrightarrow{{\mathbf{C}}}(t)=\big(c^\ast(t),\overrightarrow{{\mathbf{C}}}^-(t),\overrightarrow{\mathbf{C}}^+(t)\big)^T,\qquad \overrightarrow{{\mathbf{F}}}(t)=\big(f^\ast(t),\overrightarrow{{\mathbf{F}}}^-(t),\overrightarrow{\mathbf{F}}^+(t)\big)^T,\\
 & \overrightarrow{{\mathbf{C}}}^-(t)
 =\big(c^-_0(t),c^-_1(t),\ldots,c^-_{N_1-1}(t)\big)^T,\quad\qquad\overrightarrow{{\mathbf{C}}}^+(t)=\big(c^+_0(t),c^+_1(t),\ldots,c^+_{N_2-1}(t)\big)^T.\\ &\overrightarrow{{\mathbf{F}}}^-(t)
 =\big(f^-_0(t),f^-_1(t),\ldots,f^-_{N_1-1}(t)\big)^T,~\,\,\quad\quad\overrightarrow{{\mathbf{F}}}^+(t)=\big(f^+_0(t),f^+_1(t),\ldots,f^+_{N_2-1}(t)\big)^T.\\
 &f^\ast(t)=(f,\phi^\ast),\quad f^-_{n_1}(t)=(f,\phi^-_{n_1}),\qquad f^+_{n_2}(t)=(f,\phi^+_{n_2}),\quad{0\leq n_i\leq N_i-1},~i=1,2,
 \end{split}\end{equation*}
 and the matrices
\begin{equation}\label{Matrixsys}
\mathbf{M}=\left(
  \begin{array}{ccc}
    \mathbf{M}_{1\times 1}^{(*,*)}  &{\mathbf{M}}_{1\times N_1}^{(*,-)} &{\mathbf{M}}_{1\times N_2}^{(*,+)} \\
    \\
    {\mathbf{M}}_{N_1\times 1}^{(-,*)} &\mathbf{M}_{N_1\times N_1}^{(-,-)}  &  \mathbf{M}_{N_1\times N_2}^{(-,+)} \\
     \\
    {\mathbf{M}}_{N_2\times 1}^{(+,*)}  & \mathbf{M}_{N_2\times N_1}^{(+,-)} & \mathbf{M}_{N_2\times N_2}^{(+,+)}   \\
  \end{array}
\right),
\quad
\mathbf{A}=\left(
  \begin{array}{ccc}
    {\mathbf{A}}_{1\times 1}^{(*,*)}  &{\mathbf{A}}_{1\times N_1}^{(*,-)} & {\mathbf{A}}_{1\times N_2}^{(*,+)} \\
    \\
     {\mathbf{A}}_{N_1\times 1}^{(-,*)} &\mathbf{A}_{N_1\times N_1}^{(-,-)}  &  \mathbf{A}_{N_1\times N_2}^{(-,+)} \\
     \\
    \mathbf{A}_{{N_2}\times 1}^{(+,*)}  & \mathbf{A}_{N_2\times N_1}^{(+,-)} & \mathbf{A}_{N_2\times N_2}^{(+,+)}   \\
  \end{array}
\right),
\end{equation}
with the entries
 \begin{equation*}\begin{aligned}
 &\mathbf{M}_{c\times d}^{(a,b)}(i+1,j+1)=(\phi^b_{j},\phi^a_{i}),\quad \mathbf{A}_{c\times d}^{(a,b)}(i+1,j+1)=a_{pq}^\mu(\phi^b_{j},\phi^a_{i}),\\ &a,b=*,-,+,\quad c,d=1,N_1,N_2,\qquad 0\leq i\leq c-1,\quad 0\leq j\leq d-1,
 \end{aligned}\end{equation*}
 and $\overrightarrow{{\mathbf{C}}}(0)$ is determined by the initial data.

 The proof of the tempered derivative relation \eqref{TDphi},  and the detail on the entries of the matrix $\mathbf{A}$ can be found in Appendix \ref{AppenixC}.
 Base on the semi-discrete scheme \eqref{MatrixSystem},  we further use the third-order explicit Runge-Kutta method in time direction with step size $h=10^{-3}$ to numerically solve the problem.

\subsection{Numerical results}
We solve  \eqref{tempfracdiffuequa} with $C_T=1$ and $u_0=10e^{-5|x|}$ as the initial distribution by using the method presented in the previous section.
We first test the accuracy of our method.  In Figure \ref{figure3},  we  plot the  the convergence rate of the spectral method at $T=5$ with fixed time step  $h=10^{-3}$, in which  $f(x,t)\equiv0$ and $f(x,t)= \cos{t} ~e^{-x^2}$ are the resource terms of the left and the right respectively.
\begin{figure}[htp]
\begin{minipage}{0.48\linewidth}
\begin{center}
\includegraphics[scale=0.370]{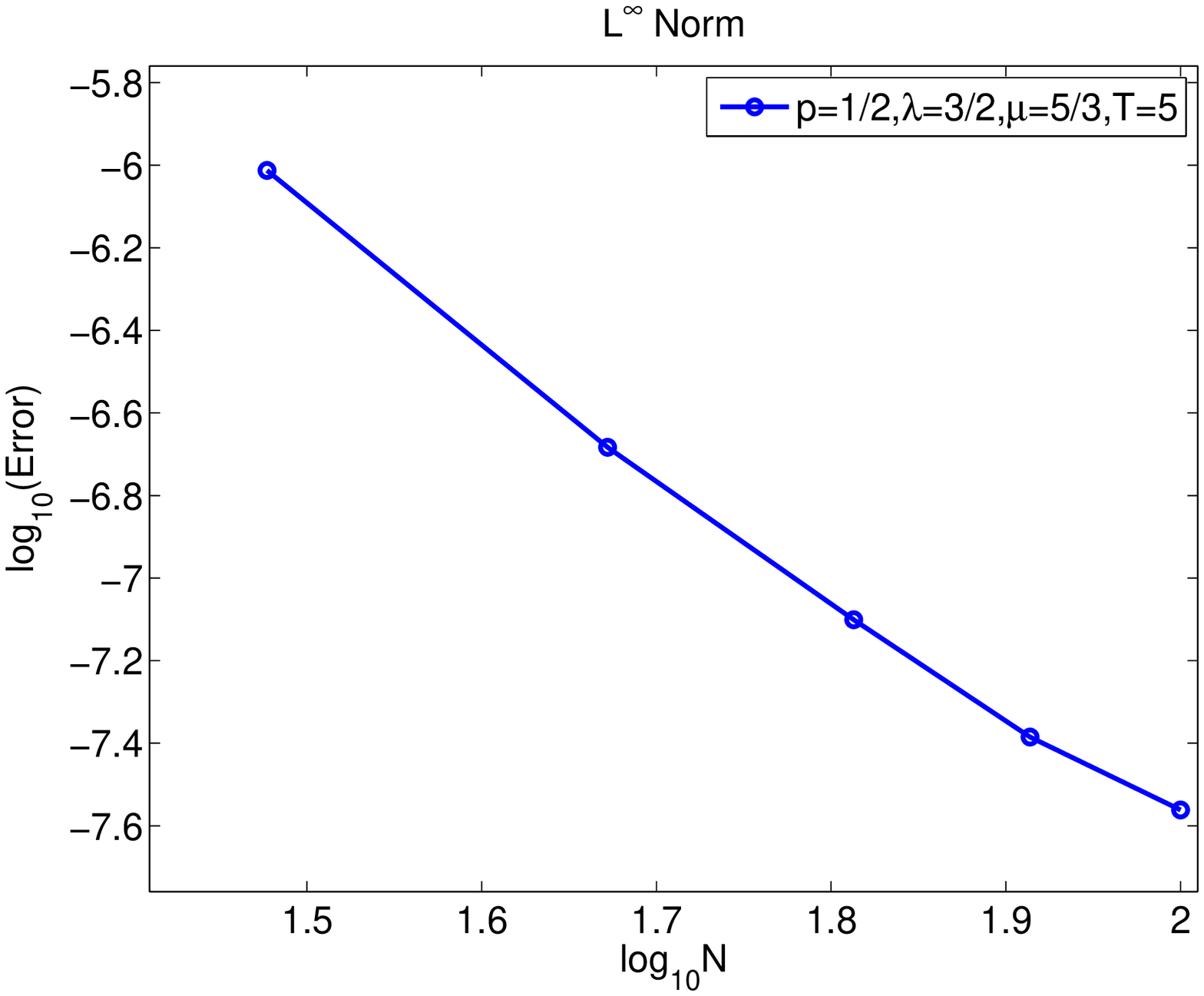}\end{center}
\end{minipage}
\begin{minipage}{0.48\linewidth}
\begin{center}
\includegraphics[scale=0.370]{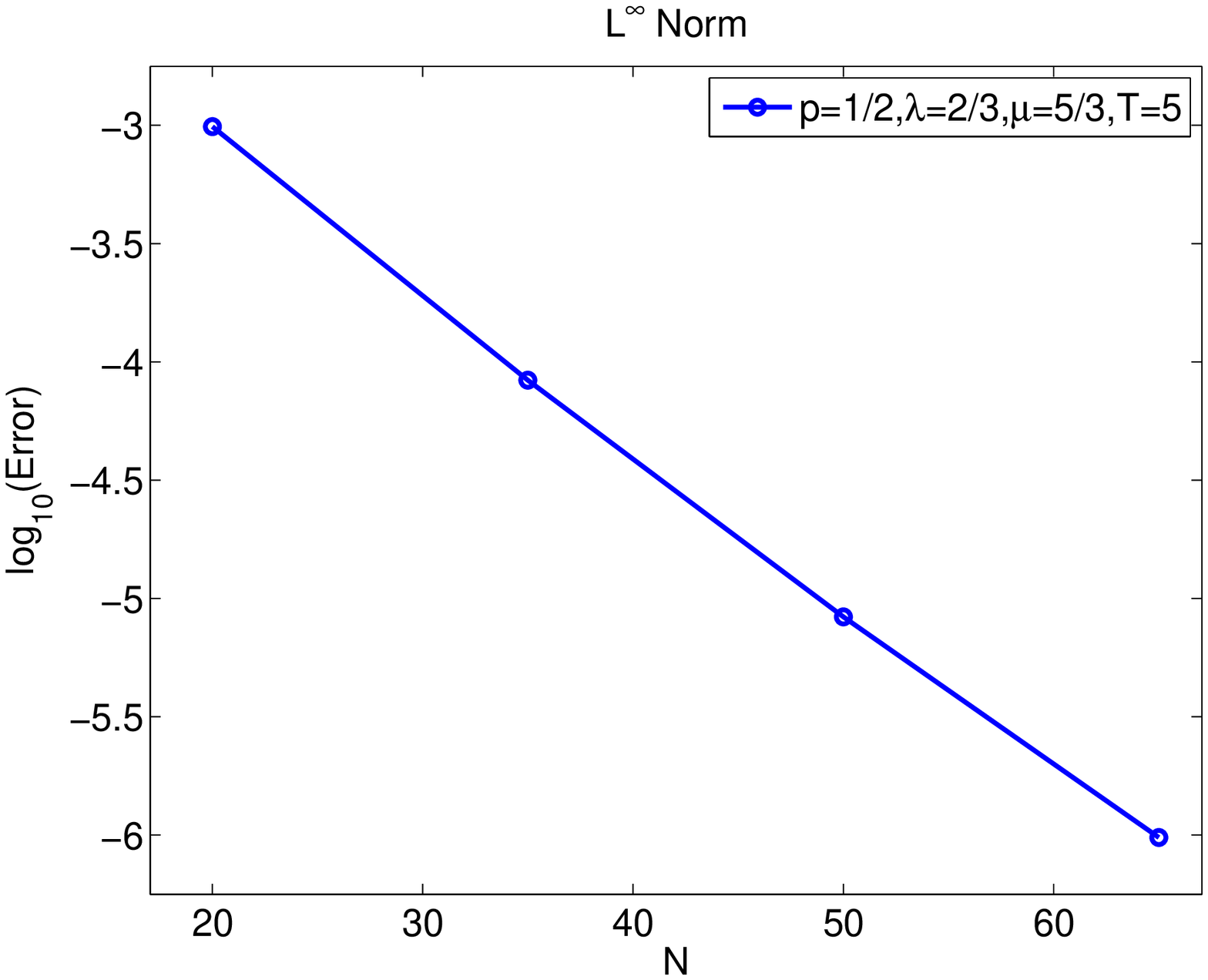}\end{center}
\end{minipage}
\caption{Left:  $f(x,t)\equiv 0.$ \quad Right:   $f(x,t)=\cos{t}~ e^{-x^2}$.}\label{figure3}
\end{figure}

Next, we examine  behaviors of the solution under various situations.
In Figure \ref{figure1}, we  plot the snapshots at different times of the tempered fractional diffusion with $p=1/3,~q=2/3$ and $p=3/4,~q=1/4$, respectively.  The case with  $p=q=1/2$ is plotted in  Figure \ref{figure2}. 

\begin{figure}[htp]
\begin{minipage}{0.48\linewidth}
\begin{center}
\includegraphics[scale=0.370]{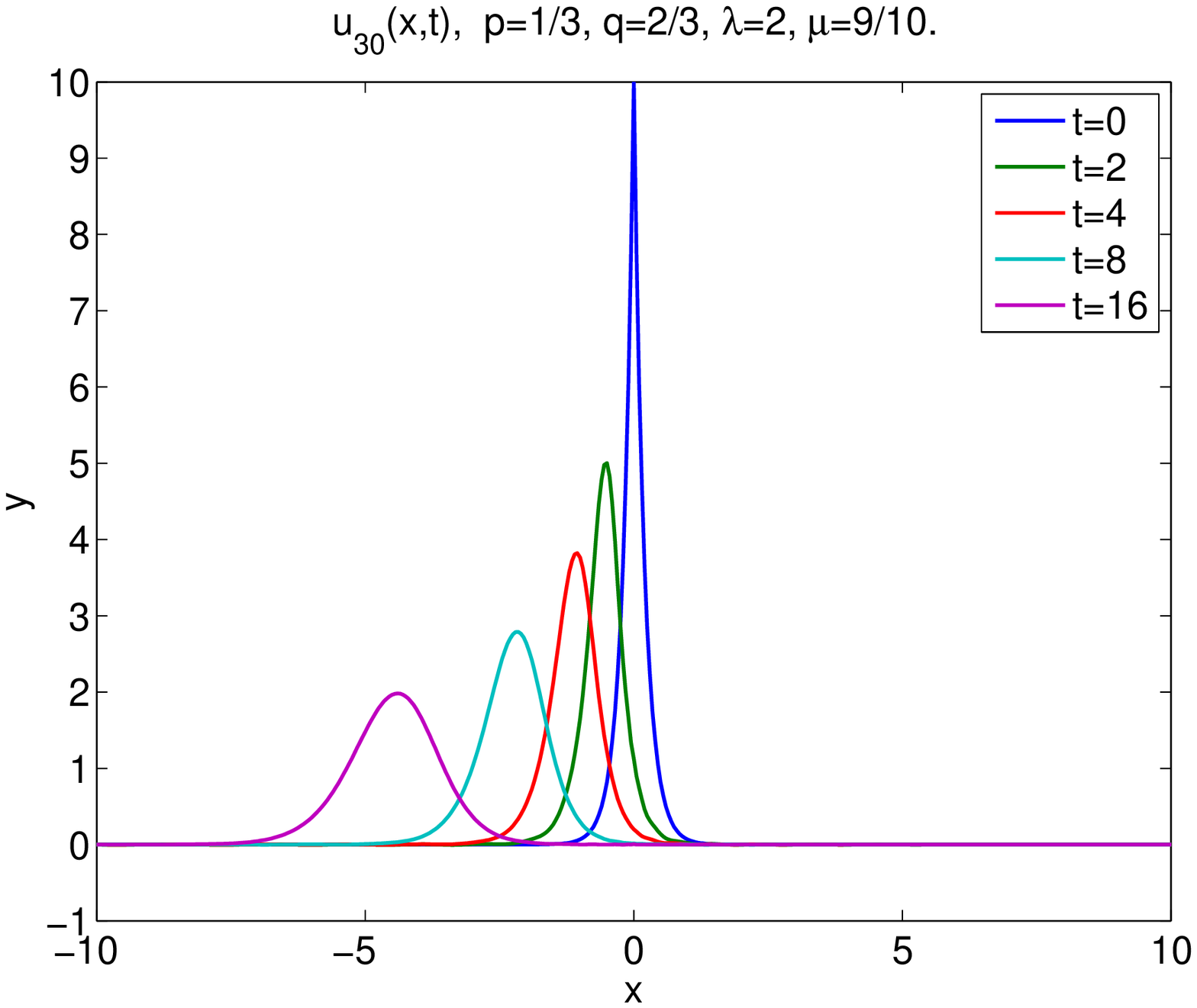}\end{center}
\end{minipage}
\begin{minipage}{0.48\linewidth}
\begin{center}
\includegraphics[scale=0.370]{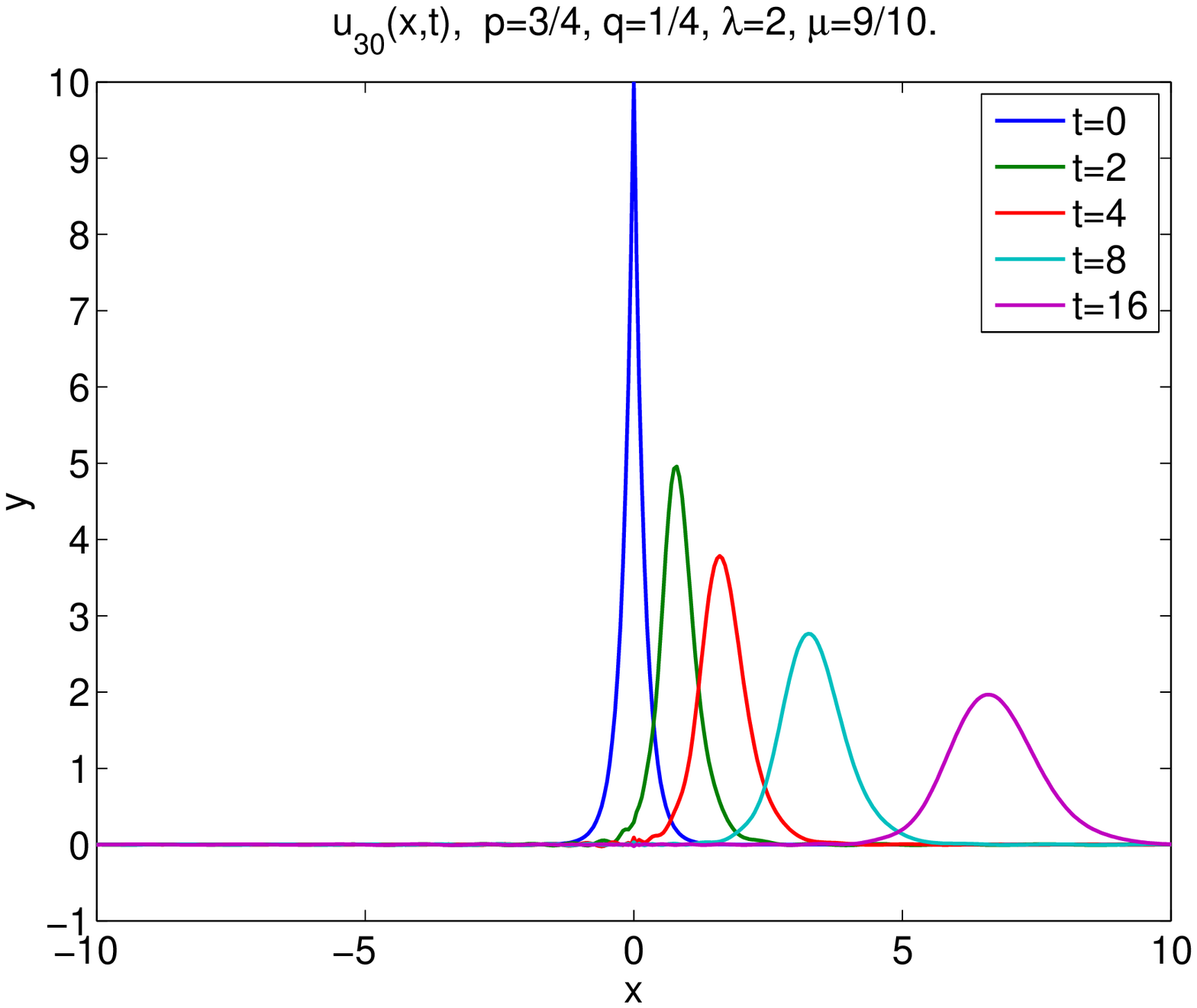}\end{center}
\end{minipage}
\caption{Left: $p=1/3,~q=2/3.$ \quad Right: $p=3/4,~q=1/4.$ }\label{figure1}
\end{figure}

\begin{figure}[htp]
\begin{minipage}{0.48\linewidth}
\begin{center}
\includegraphics[scale=0.370]{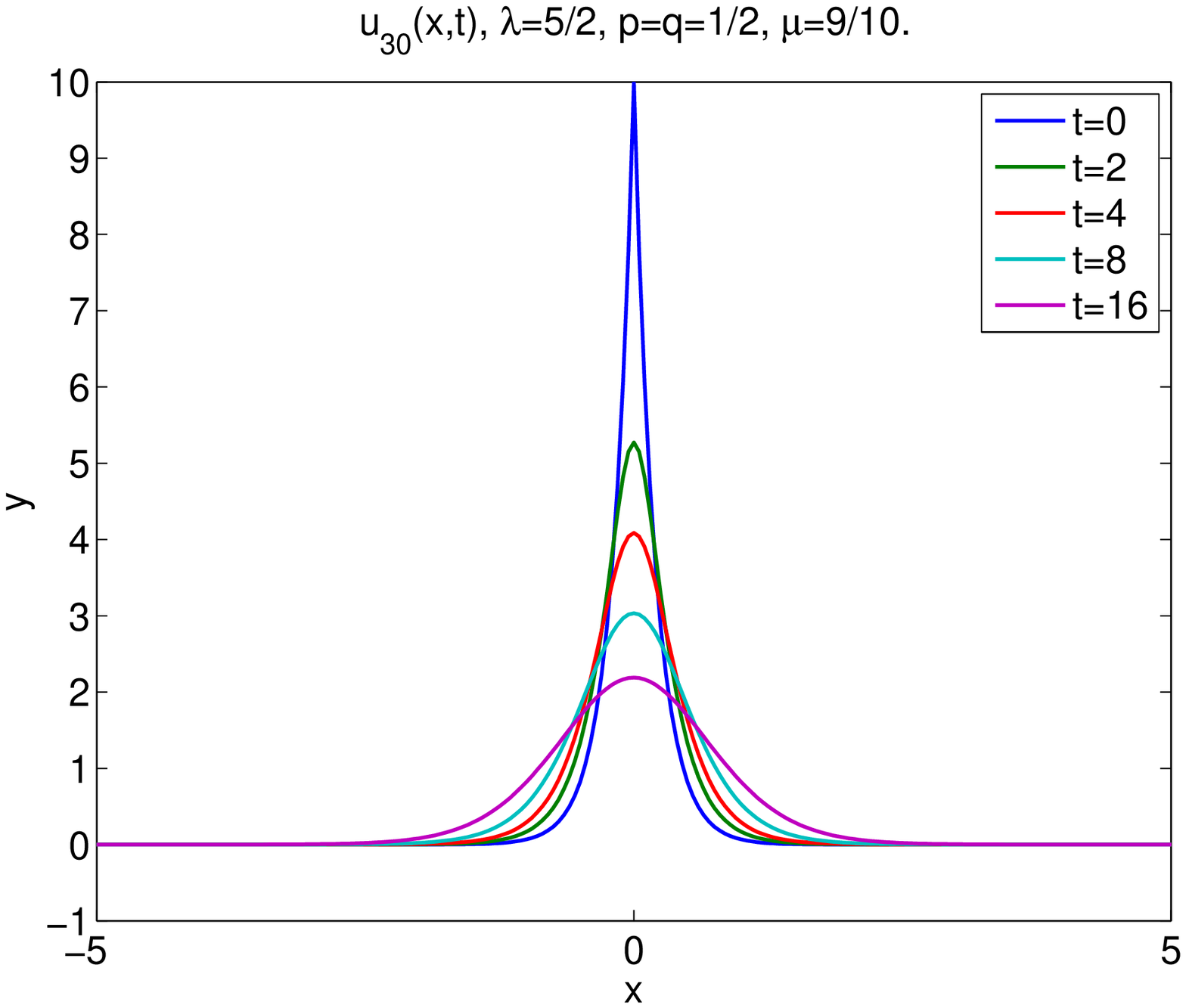}\end{center}
\end{minipage}
\begin{minipage}{0.48\linewidth}
\begin{center}
\includegraphics[scale=0.370]{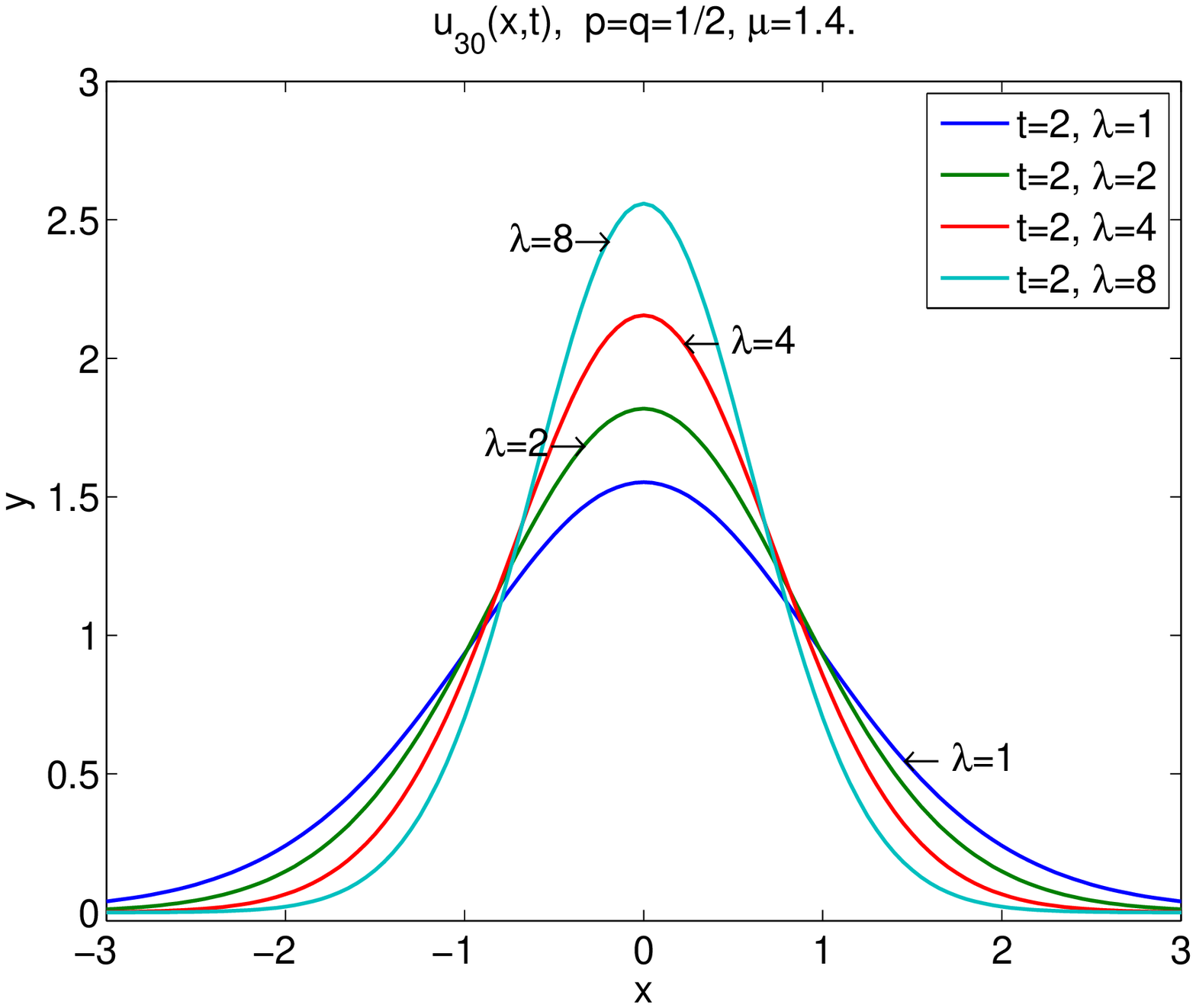}\end{center}
\end{minipage}
\caption{Left: $p=q=1/2,~\lambda=5/2.$ \quad Right: $p=q=1/2,~t=2.$ }\label{figure2}
\end{figure}


\begin{itemize}
\item The parameters $p$ and $q$ reflect the directional preference of the particle jumping. More precisely, if $p>q$, the particles tend to jump to the right,  and if $p<q$,  the particles tend to jump to the left, see Figure \ref{figure1}.
 In particular,  $p=q$ produces a symmetric profile in the case of $f(x,t)=0$, see the left in  Figure \ref{figure2}.

\item  The  parameter $\lambda$ determines the probability of the  jump distance of the particles. A larger $\lambda$ indicates a shorter  jump distance, see the right of Figure \ref{figure2}.
\item To compare with  the usual fractional diffusion equation, i.e., $\lambda=0$,  , we plot in Figure \ref{figure_compare}  the particle distributions of the usual  fractional diffusion and the tempered fractional diffusion  with initial distribution $u_0(x)=10e^{-4 x^2}$ at time $t=10$.
We observe that  the tail of the tempered fractional diffusion behaves like $|x|^{-\mu-1}e^{-\lambda |x|}$ for large $|x|$ while that that of the  usual fractional diffusion  behaves like $|x|^{-\mu-1}$.
\end{itemize}

\begin{figure}[htp]
\begin{center}
\includegraphics[scale=0.570]{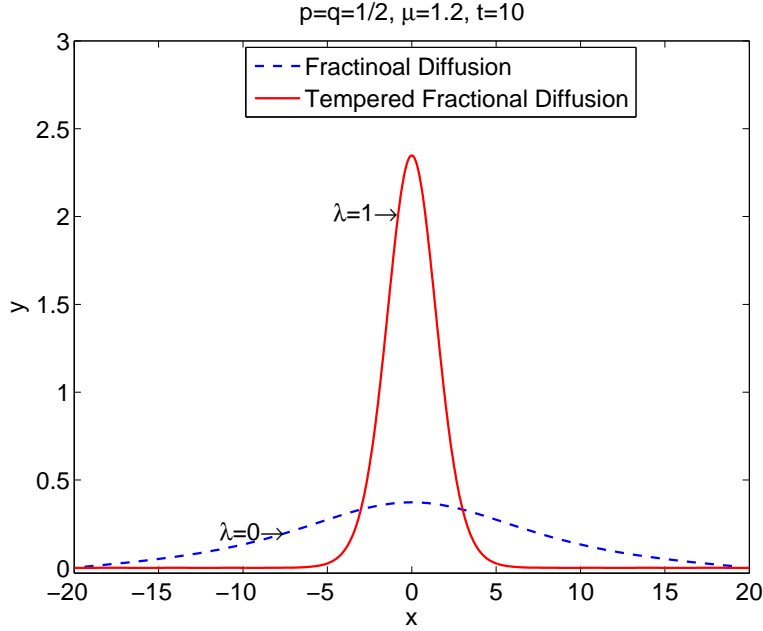}
\end{center}
\caption{Initial distribution $u_0(x)=10e^{-4 x^2}$.}\label{figure_compare}
\end{figure}

\section{Concluding remarks}
We presented in this paper efficient spectral methods using the generalized Laguerre functions for solving the tempered fractional differential equations. Our numerical methods and analysis are based on an  important observation  that the tempered fractional derivative, when restricted to the half line,
is intrinsically related to the generalized Laguerre functions that we defined in Sections \ref{SectionLaguerre}. By exploring the properties of generalized Laguerre functions, we derived optimal approximation results in properly weighted Sobolev spaces, and showed that

 we define two classes of generalized Laguerre functions, study their approximation properties, and apply them for solving  simple one sided tempered fractional equations. In Section \ref{SectionApp2}, we develop a spectral-Galerkin method for solving a tempered fractional diffusion equation   on the half line. Finally, we present a spectral-Galerkin method for solving the tempered fractional diffusion equation  on the whole line


 \begin{appendix}
\section{Proof of Lemma \ref{Lemmalagupolyderi}} \label{AppenixB}
\renewcommand{\theequation}{A.\arabic{equation}}
\setcounter{equation}{0}

We first  prove  \eqref{lagupolyint}-\eqref{lagupolyderi}.  Recall the fractional integral formula of hypergeometric functions (see \cite[P. 287]{Andrews99}):
  for real $b,\rho\ge 0,$
  \begin{equation}\label{Represtation}
  x^{b+\rho-1}\,{}_1F_1(a;b+\rho; x)=\frac{\Gamma(b+\rho)}{\Gamma(b)\Gamma(\rho)}\int_0^x (x-t)^{\rho-1}t^{b-1}
  {}_1F_1(a;b; t)\, {\rm d}t,\quad x\in \mathbb{R}^+.
  \end{equation}
Taking $a=-n,~b=\alpha+1$ and using  the hypergeometric representation \eqref{Laguerre} of the Laguerre polynomials,  we obtain
\begin{equation*}
x^{\alpha+\rho} L_n^{(\alpha+\rho)}(x)
=\frac{\Gamma(n+\alpha+\rho+1)}{\Gamma(n+\alpha+1)\Gamma(\rho)}\int_{0}^x {(x-t)^{\rho-1}} t^\alpha
L_n^{(\alpha)}(t)\, {\rm d}t,
\end{equation*}
which yields  \eqref{lagupolyint}, i.e.,
$$
\I{0}{x}{\rho} \{x^\alpha L_n^{(\alpha)}(x)\}=h_n^{\alpha,-\rho}\,x^{\alpha+\rho}\, L_n^{(\alpha+\rho)}(x).
$$
Then, performing $\D{0}{x}{\mu}$ on both sides and taking $\alpha+\mu\to \alpha,$ we
derive from the relation \eqref{rulesa} that for $\alpha-\mu>-1,$
$$\D{0}{x}{\mu} \{x^{\alpha} L_n^{(\alpha)}(x)\}= \frac 1 {h_n^{\alpha-\mu,-\rho}} x^{\alpha-\mu} L_n^{(\alpha-\mu)}(x)=\frac{\Gamma(n+\alpha+1)}{\Gamma(n+\alpha-\mu+1)}
x^{\alpha-\mu} L_n^{(\alpha-\mu)}(x).$$
This leads to \eqref{lagupolyderi}.

We now turn to \eqref{Laguerreintegralr}-\eqref{Laguerrederivativer}.
According to  \cite[(6.146), P. 191 ]{pod99} (or  \cite[(B-7.2), P. 307]{popov1982concentration}), we have
\begin{equation*}
\I{x}{\infty}{\rho} \{e^{-x} L_n^{(\alpha+\rho)}(x)\}= e^{-x} L_n^{(\alpha)}(x),\quad\alpha>-1,~\rho>0.
\end{equation*}
Similarly,  from the property: $\D{x}{\infty}{\rho} \I{x}{\infty}{\rho}\, u(x)=u(x),$ we derive
\begin{equation*}
\D{x}{\infty}{\mu} \{ e^{-x} L_n^{(\alpha)}(x)\}=e^{-x} L_n^{(\alpha+\mu)}(x).
\end{equation*}

 Finally, we  prove  \eqref{lagufunderi}.
  Noting that
 \begin{equation}\label{bridge}
 _1F_1(a;c; x)=e^x~  _1F_1(c-a;c; -x),
 \end{equation}
(cf.  \cite[P. 191]{andrews1999special}), we derive from   \eqref{Laguerre} and  \eqref{bridge} that
 \begin{equation}\begin{aligned}\label{partproperty}
 x^\alpha & L_{n}^{(\alpha)}(x)e^{-x}=x^\alpha\frac{(\alpha+1)_n}{n!}~{}_1F_1\big(-n;\alpha+1;x\big)e^{-x}\\
 &=\frac{(\alpha+1)_n}{n!}x^\alpha~{}_1F_1\big(n+\alpha+1;\alpha+1;-x\big)=\frac{(\alpha+1)_n}{n!}x^\alpha~{}_1F_1\big(n+\alpha+1;\alpha+1;-x\big)\\
 &=\frac{(\alpha+1)_n}{n!}\sum_{j=0}^\infty \frac{(n+\alpha+1)_j(-1)^j}{(\alpha+1)_j}\frac{x^{j+\alpha}}{j!}.
 \end{aligned}\end{equation}
Then acting the derivative $\D{}{}{k}$ on \eqref{partproperty} and using the identities \eqref{Laguerre}, \eqref{bridge} again, we obtain
  \begin{equation*}\begin{aligned}
\D{}{}{k} \big\{x^\alpha L_{n}^{(\alpha)}(x)e^{-x}\big\}&=\frac{(\alpha+1)_n}{n!}\sum_{j=0}^\infty \frac{(n+\alpha+1)_j(-1)^j}{(\alpha+1)_j}\frac{\D{}{}{k} x^{j+\alpha}}{j!}\\
 &=\frac{(\alpha+1)_n}{n!}\sum_{j=0}^\infty \frac{(n+\alpha+1)_j(-1)^j}{(\alpha+1)_j}\frac{\Gamma(j+\alpha+1) }{\Gamma(j+\alpha-k+1)}\frac{ x^{j+\alpha-k}}{j!}\\
  &=x^{\alpha-k}\frac{(\alpha+1)_n}{n!}\frac{\Gamma(\alpha+1)}{\Gamma(\alpha-k+1)}\sum_{j=0}^\infty \frac{(n+\alpha+1)_j}{(\alpha-k+1)_j}\frac{ (-x)^{j}}{j!}\\
&=x^{\alpha-k}\frac{(\alpha-k+1)_{n+k}}{n!}{}_1F_1\big(n+\alpha+1;\alpha-k+1;-x\big)\\
&=\frac{(n+k)!}{n!}x^{\alpha-k}\frac{(\alpha-k+1)_{n+k}}{(n+k)!}{}_1F_1\big(-n-k;\alpha-k+1;x\big)e^{-x}\\
&=\frac{\Gamma(n+k+1)}{\Gamma(n+1)}x^{\alpha-k}L^{(\alpha-k)}_{n+k}(x)e^{-x}.\\
 \end{aligned}\end{equation*}
This ends the proof.

\section{The proof of \eqref{TDphi} and the detail on the entries of $\mathbf{A}$  } \label{AppenixC}
\renewcommand{\theequation}{B.\arabic{equation}}
\setcounter{equation}{0}
\textbf{Proof of \eqref{TDphi}}
  \begin{itemize}
  \item for $x\in\mathbb{R}^-$, $0<s<1$,
  \begin{equation}
\begin{aligned}
\TD{-\infty}{x}{s}\phi^{\ast}(x)&=\TI{-\infty}{x}{1-s}\TD{-\infty}{x}{1}\phi^{\ast}(x)=\frac{e^{-\lambda x}}{\Gamma(1-s)}\int_{-\infty}^x\frac{e^{\lambda \tau}(2\lambda)e^{\lambda \tau}}{(x-\tau)^s} {\rm d}\tau\\
&\overset{t=x-\tau}{=}\frac{2\lambda e^{-\lambda x}}{\Gamma(1-s)}\int_0^{\infty}\frac{e^{2\lambda (x-t)}}{t^s} {\rm d}t
=\frac{(2\lambda )^{s}e^{\lambda x}}{\Gamma(1-s)}\int_0^{\infty}e^{-2\lambda t}(2\lambda t)^{-s} {\rm d}(2\lambda t)\\&=(2\lambda )^{s}e^{\lambda x},\\
\TD{-\infty}{x}{s}\phi^{-}_{n_1}(x)&=\TI{-\infty}{x}{1-s}\TD{-\infty}{x}{1}\phi^-_{n_1}(x)\overset{\eqref{GLFkderi+-}}{=}\TI{-\infty}{x}{1-s}\{-({n_1}+1)\GLa{0}{{n_1}+1}{(-x)}\}\\
&\overset{\eqref{Laguerreintegralr}}{=}-({n_1}+1)(2\lambda)^{s-1}L^{(s-1)}_{{n_1}+1}{(-2\lambda x)}e^{-\lambda x}\\
\TD{-\infty}{x}{s}\phi^{+}_{n_2}(x)&=0
\end{aligned}
\end{equation}
 \item for $x\in\mathbb{R}^+$, $0<s<1$,
  \begin{equation}
\begin{aligned}
\TD{-\infty}{x}{s}\phi^{\ast}(x)&=\TI{-\infty}{x}{1-s}\TD{-\infty}{x}{1}\phi^{\ast}(x)=\frac{e^{-\lambda x}}{\Gamma(1-s)}\int_{-\infty}^0\frac{2\lambda e^{2\lambda \tau}}{(x-\tau)^s} {\rm d}\tau
\overset{\tau=x-t}{=}\frac{2\lambda e^{\lambda x}}{\Gamma(1-s)}\int_x^{\infty}\frac{ e^{-2\lambda t}}{t^s} {\rm d}t,\\
\TD{-\infty}{x}{s}\phi^{-}_{n_1}(x)&=\TI{-\infty}{x}{1-s}\TD{-\infty}{x}{1}\phi^{-}_{n_1}(x)\overset{\eqref{GLFkderi+-}}{=}
\frac{e^{-\lambda x}}{\Gamma(1-s)}\int_{-\infty}^0\frac{-(n_1+1)L^{(0)}_{{n_1}+1}{(-2\lambda\tau)}e^{2\lambda\tau}}{(x-\tau)^s} {\rm d}\tau\\
&\overset{\tau=x-t}{=}-e^{\lambda x}\frac{{n_1}+1}{\Gamma(1-s)}\int_x^{\infty}\frac{L^{(0)}_{{n_1}+1}{(2\lambda (t-x))}e^{-2\lambda t}}{t^s} {\rm d}t,\\
\TD{-\infty}{x}{s}\phi^{+}_{n_2}(x)&=\TI{-\infty}{x}{1-s}\TD{-\infty}{x}{1}\phi^{+}_{n_2}(x)\overset{\eqref{GLFkderi++}}{=}\frac{e^{-\lambda x}}{\Gamma(1-s)}\int_0^{x}\frac{({n_2}+1)L^{(0)}_{{n_2}}{(x)}}{(x-\tau)^s} {\rm d}\tau\\
&=\frac{\Gamma(n_2+2)}{\Gamma(n_2+2-s)}x^{1-s}\GLa{1-s}{{n_2}}{(x)}.
\end{aligned}
\end{equation}
\end{itemize}
\textbf{The entries of matrix $\mathbf{A}$ with $1<\mu=1+s<2$.}
 \begin{equation}
\begin{aligned}
\big(\TD{-\infty}{x}{s}\phi^{\ast},&\TD{x}{\infty}{1}\phi^{\ast}\big)=\frac{-(2\lambda)^2}{\Gamma(1-s)}\int_0^\infty \int_x^{\infty}\frac{ e^{-2\lambda t}}{t^s} {\rm d}t{\rm d}x=\frac{-(2\lambda)^2}{\Gamma(1-s)}\int_0^\infty\frac{ e^{-2\lambda t}}{t^s} \int_0^{t}1 {\rm d}x{\rm d}t\\
&=\frac{-(2\lambda)^2}{\Gamma(1-s)}\int_0^\infty{t^{1-s}}{ e^{-2\lambda t}}{\rm d}t\overset{\tau=2\lambda t}{=}\frac{-(2\lambda)^{s}}{\Gamma(1-s)}\int_0^\infty{\tau^{1-s}}{ e^{-\tau}}{\rm d}\tau=(s-1)(2\lambda)^{s}.
\end{aligned}
\end{equation}
Since
$$\D{}{}{}\big\{(2\lambda x)L^{(1)}_{n_2+1}(2\lambda x)\big\}=2\lambda(n_2+2) L^{(0)}_{n_2+1}(2\lambda x),~
\text{i.e.}
~\int_0^{t} L^{(0)}_{n_2+1}(2\lambda x){\rm d}x=\frac{1}{n_2+2} tL^{(1)}_{n_2+1}(2\lambda t),$$
then,
 \begin{equation}
\begin{aligned}
\big(\TD{-\infty}{x}{s}\phi^{\ast},\TD{x}{\infty}{1}\phi^+_{n_2}\big)
&=\frac{-2\lambda(n_2+1)}{\Gamma(1-s)}\int_0^\infty\int_x^{\infty}\frac{ e^{-2\lambda t}}{t^s} {\rm d}t~ L^{(0)}_{n_2+1}(2\lambda x){\rm d}x\\
&{=}\frac{-2\lambda(n_2+1)}{\Gamma(1-s)}\int_0^\infty\frac{ e^{-2\lambda t}}{t^s}\int_0^{t}L^{(0)}_{n_2+1}(2\lambda x) {\rm d}x~ {\rm d}t\\
&{=}\frac{-2\lambda(n_2+1)}{(n_2+2)\Gamma(1-s)}\int_0^\infty t^{1-s}L^{(1)}_{n_2+1}(2\lambda t)e^{-2\lambda t} {\rm d}t.
\end{aligned}
\end{equation}
Similarly, we have
\begin{equation}\begin{aligned}
\big(\TD{-\infty}{x}{s}\phi^{-}_{n_1},\TD{x}{\infty}{1}\phi^{+}_{n_2}\big)
&{=}\frac{({n_1}+1)({n_2}+1)}{\Gamma(1-s)}\int_0^\infty \int_x^{\infty}\frac{L^{(0)}_{{n_1}+1}{(2\lambda (t-x))}e^{-2\lambda t}}{t^s} {\rm d}t ~L^{(0)}_{n_2+1}(2\lambda x) {\rm d}x\\
{=}&\frac{({n_1}+1)({n_2}+1)}{\Gamma(1-s)}\int_0^\infty {t^{-s}}e^{-2\lambda t} \int_0^{t}~L^{(0)}_{n_2+1}(2\lambda x)L^{(0)}_{{n_1}+1}{(2\lambda (t-x))} {\rm d}x  {\rm d}t\\
\overset{x=t\xi}{=}&\frac{({n_1}+1)({n_2}+1)}{\Gamma(1-s)}\int_0^\infty {t^{1-s}}e^{-2\lambda t} \int_0^{1}~ L^{(0)}_{n_2+1}(2\lambda t\xi)L^{(0)}_{{n_1}+1}{(2\lambda t(1-\xi))} {\rm d}\xi  {\rm d}t.
\end{aligned}\end{equation}
\textbf{The entries of matrix $\mathbf{A}$ with $0<\mu=s<1$.}
 \begin{equation}
\begin{aligned}
&\big(\TD{-\infty}{x}{s}\phi^{\ast},\phi^{\ast}\big)=(2\lambda)^s\int_{-\infty}^0e^{2\lambda x} {\rm d}x+\frac{2\lambda}{\Gamma(1-s)}\int_0^\infty \int_x^{\infty}\frac{ e^{-2\lambda t}}{t^s} {\rm d}t{\rm d}x
\\&=(2\lambda)^{s-1}+\frac{2\lambda}{\Gamma(1-s)}\int_0^\infty\frac{ e^{-2\lambda t}}{t^s} \int_0^{t}1 {\rm d}x{\rm d}t=(2\lambda)^{s-1}+\frac{2\lambda}{\Gamma(1-s)}\int_0^\infty{t^{1-s}}{ e^{-2\lambda t}}{\rm d}t\\
&\overset{\tau=2\lambda t}{=}(2\lambda)^{s-1}+\frac{(2\lambda)^{s-1}}{\Gamma(1-s)}\int_0^\infty{\tau^{1-s}}{ e^{-\tau}}{\rm d}\tau
=(2-s)(2\lambda)^{s-1}.
\end{aligned}
\end{equation}
Owe to
$$\D{}{}{}\big\{(2\lambda x)^2L^{(2)}_{n_2}(2\lambda x)\big\}=(2\lambda)^2(n_2+2) xL^{(1)}_{n_2}(2\lambda x),$$
i.e.,
$$\int_0^{t} xL^{(1)}_{n_2}(2\lambda x){\rm d}x=\frac{1}{n_2+2} t^2L^{(2)}_{n_2}(2\lambda t),$$
we obtain that
  \begin{equation}
\begin{aligned}
&\big(\TD{-\infty}{x}{s}\phi^{\ast},\phi^+_{n_2}\big)=\int_0^\infty\frac{e^{-\lambda x}}{\Gamma(1-s)}\int_{-\infty}^0\frac{2\lambda e^{2\lambda \tau}}{(x-\tau)^s} {\rm d}\tau~ xL^{(1)}_{n_2}(2\lambda x)e^{-\lambda x} {\rm d}x\\
&\overset{\tau=x-t}{=}\frac{2\lambda}{\Gamma(1-s)}\int_0^\infty\int_x^{\infty}\frac{ e^{-2\lambda t}}{t^s} {\rm d}t~ xL^{(1)}_{n_2}(2\lambda x){\rm d}x{=}\frac{2\lambda}{\Gamma(1-s)}\int_0^\infty\frac{ e^{-2\lambda t}}{t^s}\int_0^{t}xL^{(1)}_{n_2}(2\lambda x) {\rm d}x~ {\rm d}t\\
&\quad{=}\frac{2\lambda}{(n_2+2)\Gamma(1-s)}\int_0^\infty t^{2-s}L^{(2)}_{n_2}(2\lambda t)e^{-2\lambda t} {\rm d}t.
\end{aligned}
\end{equation}
Similarly, we have
\begin{equation}\begin{aligned}
\big(\TD{-\infty}{x}{s}\phi^{-}_{n_1},\phi^{+}_{n_2}\big)&{=}-\frac{{n_1}+1}{\Gamma(1-s)}\int_0^\infty \int_x^{\infty}\frac{L^{(0)}_{{n_1}+1}{(2\lambda (t-x))}e^{-2\lambda t}}{t^s} {\rm d}t ~xL^{(1)}_{n_2}(2\lambda x) {\rm d}x\\
&{=}-\frac{{n_1}+1}{\Gamma(1-s)}\int_0^\infty {t^{-s}}e^{-2\lambda t} \int_0^{t}~xL^{(1)}_{n_2}(2\lambda x)L^{(0)}_{{n_1}+1}{(2\lambda (t-x))} {\rm d}x  {\rm d}t\\
\overset{x=t\xi}{=}&-\frac{{n_1}+1}{\Gamma(1-s)}\int_0^\infty {t^{2-s}}e^{-2\lambda t} \int_0^{1}~\xi L^{(1)}_{n_2}(2\lambda t\xi)L^{(0)}_{{n_1}+1}{(2\lambda t(1-\xi))} {\rm d}\xi  {\rm d}t\\
\end{aligned}\end{equation}
The above equations are enough to calculate out the matrix $\mathbf{A}$ due to some symmetric properties of the entries.
\end{appendix}

\end{document}